\let\oldcup\cup
\let\oldcap\cap
\let\cup\oldcup
\let\cap\oldcap
\numberwithin{equation}{section}
\newcommand{\mylabel}[2]{#2\def\@currentlabel{#2}\label{#1}}
\def\HK{H_K}
\def\HdK{H^*_K}
\def\psiK{\Psi_K}
\def\Honenull{H^1_0(\mathbb{T},\mathbb{R}^{2n})}
\def\pio{\pi_0}
\def\conv{\operatorname{Conv}(\mathbb{R}^{2n})}
\def\chara{\operatorname{Char}}
\def\sys{\operatorname{Sys}}
\def\ev{\text{ev}_0}
\newcommand{\crit}{\operatorname{crit}}
\newcommand{\indsys}{\operatorname{ind}_{\operatorname{sys}}^{S^1}}
\def\indfr{\operatorname{ind}_{\operatorname{FR}}}
 \theoremstyle{plain}
\newtheorem*{theorem*}{Theorem}
\newtheorem{Theorem}{Theorem}[section]
\newtheorem{Corollary}[Theorem]{Corollary}
\theoremstyle{definition}
\newtheorem{Definition}[Theorem]{Definition}
\theoremstyle{plain}
\newtheorem{theorem}{Theorem}[subsection]
\newtheorem{corollary}[theorem]{Corollary}
\newtheorem{lemma}[theorem]{Lemma}
\newtheorem{proposition}[theorem]{Proposition}
\theoremstyle{definition}
\newtheorem{remark}[theorem]{Remark}
\newtheorem*{remark*}{Remark}
\newtheorem{example}[theorem]{Example}
\newtheorem{question}{Question}
\title{Systolic $S^1$-index and characterization of non-smooth Zoll convex bodies}
\author{Stefan Matijevi{\'c}}
\begin{document}

\begin{abstract}
We define the systolic $S^1$-index of a convex body as the Fadell–Rabinowitz index of the space of generalized systoles associated with its boundary. We show that this index is a symplectic invariant. Using the systolic $S^1$-index, we introduce the notion of generalized Zoll convex bodies and prove that this definition coincides with the classical one when the convex body satisfies the uniqueness of systoles property, that is, when through every point passes at most one systole. Moreover, we show that generalized Zoll convex bodies can be characterized in terms of their Gutt–Hutchings capacities, and we prove that the space of generalized Zoll convex bodies is closed in the space of all convex bodies. As a corollary, we establish that if the interior of a convex body is symplectomorphic to the interior of a ball, then the convex body is generalized Zoll, and in particular Zoll if it satisfies the uniqueness of systoles property. Finally, we discuss several examples.

\end{abstract}

\maketitle

\tableofcontents

\section{Introduction}

We say that $K \subset \mathbb{R}^{2n}$ is a convex body if $K$ is the closure of a bounded open convex set. The boundary of such a set is a hypersurface,which is a Lipschitz submanifold of $\mathbb{R}^{2n}$ and, as such, has a tangent plane and an outer normal vector that are defined almost everywhere. Moreover, we can define the tangent cone and the normal cone at every point of $\partial K$ (see \cite{Cla83}).

We define the outward normal cone of $\partial K$ at $x \in \partial K$ as 
\[
N_{\partial K}(x) = \{\eta \in \mathbb{R}^{2n}\backslash\{0\} \mid \langle \eta, x - y \rangle \geq 0, \quad \forall y \in K\}.
\]

If $\partial K$ is differentiable at $x$, then $N_{\partial K}(x) = \mathbb{R}_{+} n$, where $n$ is the unit outer normal vector at $x$. Let $J_0$ denote the standard complex structure of $\mathbb{R}^{2n}$, and let $\mathbb{T} := \mathbb{R}/\mathbb{Z}$.

Following \cite{Cla81}, we define the set of generalized closed characteristics on $\partial K$, denoted by $\chara(\partial K)$, as the set
\[
\chara(\partial K) := \{\gamma: \mathbb{T} \to \partial K \mid \dot{\gamma}(t) \in J_0 N_{\partial K}(\gamma(t)), \ \text{a.e.}\}/\sim,
\]

where $\gamma: \mathbb{T} \to \partial K$ is an absolutely continuous loop satisfying the condition that there exist positive constants $m_\gamma, M_\gamma > 0$ such that  
\[
m_\gamma \leq |\dot{\gamma}(t)| \leq M_\gamma, \quad \text{a.e.}
\]

The equivalence relation $\sim$ is defined as follows. We say that $\gamma_1$ is equivalent to $\gamma_2$ (i.e., $\gamma_1 \sim \gamma_2$) if there exists a bi-Lipschitz homeomorphism $\tau: \mathbb{T} \to \mathbb{T}$ such that $\tau(0) = 0$ and $
\gamma_1(t) = \gamma_2(\tau(t)).$
This homeomorphism must preserve orientation. Let $\gamma: \mathbb{T} \to \mathbb{R}^{2n}$ be an absolutely continuous loop. We define the action of $\gamma$ as 
\[
\mathcal{A}(\gamma) := \int\limits_{\gamma} \lambda = \frac{1}{2} \int\limits_\mathbb{T} \langle \dot{\gamma}(t), J_0 \gamma(t) \rangle \, dt,
\]
where $\lambda$ is any primitive 1-form of the standard symplectic form
\[
\omega_0 = \sum_{j=1}^n dx_j \wedge dy_j.
\]

From Stokes' formula, it follows that the action is invariant under reparametrizations that preserve orientation. Thus, the action is well-defined on the space $\chara(\partial K)$, is finite, and can easily be shown to be positive for every element of $\chara(\partial K)$. We define the spectrum of $\partial K$ as
\[
\sigma(\partial K) := \{ \mathcal{A}(\gamma) \mid \gamma \in \chara(\partial K) \}.
\]
 The subset of $\chara(\partial K)$ consisting of generalized closed characteristics with minimal action is denoted by $\sys(\partial K)$, and it is always non-empty (see \cite{Cla81, AO14}). Elements of this set are called generalized systoles. To introduce an $S^1$-action on the spaces $\chara(\partial K)$ and $\sys(\partial K)$, we need a choice of parametrization. 

Assuming the origin is contained in the interior of $K$, we can define a positively $2$-homogeneous function $\HK: \mathbb{R}^{2n} \to \mathbb{R}$ such that $\HK^{-1}(1) = \partial K$. Since $\HK$ is convex, the subdifferential of $\HK$ at $x$ is defined for every $x \in \mathbb{R}^{2n}$ as 
\[
\partial \HK(x) = \{\eta \in \mathbb{R}^{2n} \mid \HK(y) - \HK(x) \geq \langle \eta, y - x \rangle, \quad \forall y \in \mathbb{R}^{2n}\}.
\]

For every $x \in \mathbb{R}^{2n}$, the set $\partial \HK(x)$ is a non-empty convex compact set, which is precisely $\{\nabla \HK(x)\}$ when $\HK$ is differentiable at $x$. Moreover, the multifunction $\partial \HK$ is upper semi-continuous. See \cite{Cla83} for the proof of these properties.

The function $\HK$ allows us to define generalized closed characteristics on $\partial K$ as follows:
\[
\chara(\partial K) = \left\{\gamma: \mathbb{T} \to \partial K \text{ absolutely continuous} \mid \dot{\gamma}(t) \in T J_0 \partial \HK(\gamma(t)) \ \text{a.e.}, \ \text{for some } T > 0 \right\}.
\]

This definition is equivalent to the previous one in the set-theoretic sense. Indeed, any $\gamma \in \chara(\partial K)$, according to this definition, has a derivative that is bounded both from below and above. Moreover, $N_{\partial K}(x) = \mathbb{R}_+ \partial \HK(x)$ for all $x \in \partial K$. Hence, the conclusion follows.

In addition, generalized closed characteristics with a fixed action $T > 0$ are precisely those satisfying the equation
\[
\dot{\gamma}(t) \in T J_0 \partial \HK(\gamma(t)), \quad \text{a.e.}
\]

This follows from the fact that the $2$-homogeneity of $\HK$ implies that
\[
\HK(x) = \frac{1}{2} \langle \partial \HK(x), x \rangle, \quad x \in \mathbb{R}^{2n},
\]
where the equality holds in the set-theoretic sense (for any element of the corresponding subdifferential). Consequently, for $\gamma \in \chara(\partial K)$, we have
\[
\dot{\gamma}(t) \in T J_0 \partial \HK(\gamma(t)), \quad \text{a.e.},
\]
if and only if $\mathcal{A}(\gamma) = T$. In particular, we have
\[
\sys(\partial K) = \left\{ \gamma : \mathbb{T} \to \partial K \ \text{a. continuous} \mid \dot{\gamma}(t) \in T_{\min} J_0 \partial \HK(\gamma(t)) \ \text{a.e.}, \ T_{\min} = \min \sigma(\partial K) \right\}.
\]

Now, on the space $\chara(\partial K)$, there is a natural $S^1$-action defined by
\[
\theta \cdot \gamma = \gamma(\cdot - \theta), \quad \theta \in \mathbb{T}, \ \gamma \in \chara(\partial K).
\]

Since the $S^1$-action preserves the absolute continuity of a loop $\gamma: \mathbb{T} \to \mathbb{R}^{2n}$, the subsets of generalized closed characteristics with fixed action are $S^1$-invariant. In particular, $\sys(\partial K)$ is $S^1$-invariant.

Given that $\partial \HK$ is bounded on $\partial K$, it follows that the space $\sys(\partial K)$ is uniformly bounded in the $W^{1,\infty}$-norm. We endow $\sys(\partial K)$ with the compact-open topology (i.e., $C^0$-topology). As shown in Proposition \ref{uniformtopology}, this space is compact with respect to this topology, which is, in fact, equivalent to the weak$^*$ topology of $W^{1,\infty}$. In contrast, with respect to other topologies, such as the strong topology of $W^{1,1}$, this space is generally not compact (see Proposition \ref{w11norm}). Hence, the uniform norm provides a natural framework for studying this space when one aims to preserve compactness properties, as in the smooth setting.

\textbf{Fadell–Rabinowitz index}

In this paper, we employ Alexandrov–Spanier cohomology (see \cite{Spa66}), as it is the cohomology theory used in the definition of the Fadell–Rabinowitz index. 

Let $\mathbb{F}$ be a field, and let $X$ be a paracompact topological space equipped with an $S^1$-action. Following Borel’s construction, we define the $S^1$-equivariant cohomology of $X$ with coefficients in $\mathbb{F}$ as
\[
\operatorname{H}^*_{S^1}(X; \mathbb{F}) = \operatorname{H}^*(X \times_{S^1} ES^1; \mathbb{F}),
\]
where $ES^1 \to BS^1$ denotes the universal $S^1$-bundle over the classifying space, and
\[
X \times_{S^1} ES^1 := (X \times ES^1)/S^1.
\]

For instance, one may take $ES^1 := S^\infty \subset \mathbb{C}^\infty$ and $BS^1 := \mathbb{C}P^\infty$. The cohomology of the classifying space $BS^1$ is the ring $\operatorname{H}^*(BS^1; \mathbb{F}) \cong \mathbb{F}[e_B]$, where $e_B$ is a generator of $\operatorname{H}^2(BS^1; \mathbb{F})$. The projection map 
\[
\pi_2 : X \times_{S^1} ES^1 \to BS^1, \qquad \pi_2([x, y]) = [y],
\]
induces a homomorphism of cohomology rings
\[
\pi^*_2 : \operatorname{H}^*(BS^1; \mathbb{F}) \to \operatorname{H}^*_{S^1}(X; \mathbb{F}).
\]
We denote $e := \pi_2^* e_B$ as the fundamental class associated with the $S^1$-space $X$. The Fadell–Rabinowitz index of $X$ is then defined by
\[
\indfr(X; \mathbb{F}) = \sup_{i \geq 0} \{ i + 1 \mid e^i \neq 0 \},
\]
and we set $\indfr(\emptyset; \mathbb{F}) = 0$.

In what follows, we focus on the case $\mathbb{F} = \mathbb{Z}_2$, in accordance with the results of \cite{Mat24}. Nevertheless, analogous statements remain valid over any field, since \cite[Theorem~A]{Mat24} holds for arbitrary field coefficients.

\begin{Definition}\label{definitionofthesystolicS1index}
 Let $K \subset \mathbb{R}^{2n}$ be a convex body, and let $K'$ be any translation of $K$ whose interior contains the origin. We define the \textit{systolic $S^1$-index} of $K$ as 
\[
\indsys(K) = \indfr(\sys(\partial K')),
\] 
where the topology on $\sys(\partial K')$ is induced by the uniform norm.   
\end{Definition} As the notation suggests, this definition does not depend on the choice of the translation $K'$. This is a consequence of Theorem \ref{mainsystolics1index} below.

In \cite{GH18}, Gutt and Hutchings defined a monotone sequence of capacities $c_i^{GH}$ for the class of Liouville domains. As shown in \cite[Theorem A]{Mat24}, for convex bodies, the Gutt–Hutchings capacities coincide with the spectral invariants introduced by Ekeland and Hofer in \cite{EH87}.  

Using this result, we prove the following.

\begin{Theorem}\label{mainsystolics1index}
Let $K \subset \mathbb{R}^{2n}$ be a convex body whose interior contains the origin. Then, it holds that
\[
\indfr(\sys(\partial K)) = \max\{i \in \mathbb{N} \mid c_i^{GH}(K) = c_1^{GH}(K)\}.
\]

In particular, $\indsys(K)$ is well-defined for a convex body $K \subset \mathbb{R}^{2n}$, and it holds
\[
\indsys(K) = \max\{i \in \mathbb{N} \mid c_i^{GH}(K) = c_1^{GH}(K)\}.
\]
\end{Theorem}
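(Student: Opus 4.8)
The plan is to relate the Fadell--Rabinowitz index of $\sys(\partial K)$ to the Ekeland--Hofer spectral invariants via a critical-point-theoretic description of systoles, and then invoke \cite[Theorem~A]{Mat24} to convert Ekeland--Hofer invariants into Gutt--Hutchings capacities. Concretely, recall the standard variational setup for closed characteristics on $\partial K$: on a suitable $S^1$-invariant Hilbert space (e.g.\ $\Honehalf$ or the loop space used by Ekeland--Hofer), one has the $S^1$-invariant action functional $\PhiH$ whose positive critical values are exactly the elements of $\sigma(\partial K)$, and the Ekeland--Hofer capacities are defined as $c_k^{EH}(K) = \inf\{ c : \indfr(\{\PhiH \le c\} \text{ or the appropriate sublevel}) \ge k\}$, i.e.\ as a sequence of minimax values over $S^1$-invariant sets detected by the fundamental class $e$. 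The minimal critical value $c_1^{EH}(K) = \min\sigma(\partial K) = T_{\min}$ is realized exactly on the critical set corresponding to $\sys(\partial K)$. Thus the multiplicity with which the value $T_{\min}$ is ``seen'' by the powers of $e$ — which is what the right-hand side $\max\{i : c_i^{GH}(K) = c_1^{GH}(K)\}$ measures, using $c_i^{GH} = c_i^{EH}$ from \cite{Mat24} — should coincide with $\indfr$ of the critical set itself.

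The key steps, in order: (1) Identify $\sys(\partial K)$, with its uniform topology, $S^1$-equivariantly with the critical set $\crit_{T_{\min}}(\PhiH)$ at the lowest positive critical level of the Ekeland--Hofer functional; here I would use that generalized systoles are precisely the absolutely continuous loops solving $\dot\gamma \in T_{\min} J_0 \partial\HK(\gamma)$ (as stated in the excerpt) and that, for the non-smooth convex $\HK$, the subdifferential formulation of $\PhiH$ has exactly these as its minimizing critical points, the topologies matching by the $W^{1,\infty}$-bound and Proposition~\ref{uniformtopology}. (2) Apply the standard Fadell--Rabinowitz / Ekeland--Hofer minimax machinery: since $c_1^{EH} = \dots = c_m^{EH} = T_{\min} < c_{m+1}^{EH}$ exactly when the fundamental class $e$ acts nontrivially up to degree $m-1$ on the local equivariant cohomology at level $T_{\min}$, and by excision/continuity of Alexandrov--Spanier cohomology this local contribution is $\operatorname{H}^*_{S^1}$ of a neighborhood deformation retracting onto $\crit_{T_{\min}}(\PhiH) = \sys(\partial K)$, one gets $m = \indfr(\sys(\partial K))$. (3) Substitute $c_i^{GH}(K) = c_i^{EH}(K)$ from \cite[Theorem~A]{Mat24} to obtain the stated formula. (4) For the ``in particular'' clause: well-definedness of $\indsys(K)$ follows because $c_i^{GH}$ is translation-invariant (being a symplectic capacity), so the right-hand side is independent of the chosen translate $K'$, hence so is $\indfr(\sys(\partial K'))$.

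I expect the main obstacle to be Step~(1)--(2) in the \emph{non-smooth} setting: the classical Ekeland--Hofer theory is developed for smooth (or at least $C^{1,1}$) convex hypersurfaces, and one must either (a) carry out the minimax comparison via an approximation argument — exhausting $K$ by smooth convex bodies $K_j \to K$, controlling the spectra and the equivariant topology of sublevel sets in the limit (using that $\sys(\partial K_j) \to \sys(\partial K)$ in the uniform topology and lower/upper semicontinuity of $\indfr$ under the relevant convergence) — or (b) work directly with Clarke's dual action functional, whose critical point theory is genuinely Lipschitz and for which the $S^1$-equivariant deformation lemmas still hold. Option (b) is cleaner but requires checking that the dual functional's lowest critical level carries the same equivariant homotopy type as $\sys(\partial K)$; the homogeneity identity $\HK(x) = \tfrac12\langle \partial\HK(x), x\rangle$ quoted in the excerpt is exactly what makes the correspondence between primal systoles and dual critical points bijective and $S^1$-equivariant, so I would build the argument around that. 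A secondary technical point is ensuring the Alexandrov--Spanier cohomology computations (continuity, strong excision) apply to the possibly badly-behaved compact set $\sys(\partial K)$ — this is precisely why that cohomology theory was chosen, so it should go through, but it needs to be stated carefully.
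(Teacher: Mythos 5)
Your overall strategy (work with Clarke's dual functional, characterize the lowest spectral level via the Fadell--Rabinowitz index of sublevel sets, then convert to Gutt--Hutchings capacities via \cite[Theorem~A]{Mat24}) is the same as the paper's, but two of your key steps contain genuine gaps. First, your claim that the homogeneity identity makes the correspondence between generalized systoles and dual critical points \emph{bijective} is false: the map $\mathcal{P}(\gamma)=\tfrac{1}{\sqrt{\mathcal{A}(\gamma)}}\pio(\gamma)$ is surjective onto $\crit(\psiK)$ but not injective in general, even for smooth convex bodies (see Example~\ref{noninjective} and Remark~\ref{convexfibration}). The paper's substitute is Propositions~\ref{centralizedsystolesconvexfibration} and \ref{homologycalequivalanceofsystolesandcenteralizedsystoles}: $\pio$ is a closed $S^1$-equivariant surjection whose fibers are convex, hence acyclic, so a Vietoris--Begle type theorem gives an isomorphism in (equivariant) Alexandrov--Spanier cohomology and therefore equality of Fadell--Rabinowitz indices of $\sys(\partial K)$, $\sys_0(\partial K)$ and the set $\sys_*(\partial K)$ of minimizers of $\psiK$. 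Without this fibration argument your identification in Step~(1) does not go through.

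Second, in Step~(2) you invoke the ``standard minimax machinery'' with a neighborhood that deformation retracts onto the critical set at level $T_{\min}$; in the non-smooth setting there is no gradient flow and no such deformation lemma, and this is exactly the difficulty the paper has to circumvent. The paper's mechanism is: (i) introduce the weak spectral invariants $s_i^{\infty}(K)$, defined by the index of sublevel sets of $\psiK$ in the \emph{uniform} topology, because closed sublevels are compact there (Lemma~\ref{compactsublevelsclarke}(2)) while they are not in $H^1_0$; (ii) use compactness of $\sys_*(\partial K)$ plus the continuity property of the Fadell--Rabinowitz index to find an invariant neighborhood $U$ with $\indfr(U)=\indfr(\sys_*(\partial K))$, and compactness of sublevels to show $\{\psiK<L\}\subseteq U$ for $L$ close to $\min\psiK$, so monotonicity alone (no retraction) gives the two inequalities; (iii) prove $s_i(K)=s_i^{\infty}(K)$ (Lemma~\ref{clarkeLinfinity}) via the Ekeland--Hofer finite-dimensional reduction on smooth strongly convex bodies together with density and Hausdorff-continuity of both families of invariants. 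Your alternative route (a), approximating $K$ by smooth bodies and passing to the limit on the spaces $\sys(\partial K_j)$, would at best yield one inequality, since the index is only upper semicontinuous under such convergence (cf.\ Proposition~\ref{uppersemicontuperbound}(1)); the paper places the approximation argument at the level of the spectral invariants, where genuine continuity holds. Your Step~(3) and the translation-invariance argument in Step~(4) are fine.
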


Since $c_i^{GH}(K) \to \infty$ as $i \to \infty$, it follows that $\indsys(K)$ is finite. Moreover, from the previous theorem, we obtain the following immediate corollary.

\begin{Corollary}
The systolic $S^1$-index of convex bodies is a symplectic invariant. More precisely, if the interior of a convex body $K_1$ is symplectomorphic to the interior of a convex body $K_2$, then 
\[
\indsys(K_1) = \indsys(K_2).
\]
\end{Corollary}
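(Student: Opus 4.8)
The Corollary follows almost immediately from Theorem \ref{mainsystolics1index}, so the plan is simply to assemble the pieces. The plan is to combine the characterization $\indsys(K) = \max\{i \in \mathbb{N} \mid c_i^{GH}(K) = c_1^{GH}(K)\}$ with the symplectic invariance of the Gutt--Hutchings capacities.

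\begin{proof}
Let $K_1, K_2 \subset \mathbb{R}^{2n}$ be convex bodies and suppose $\varphi : \operatorname{int}(K_1) \to \operatorname{int}(K_2)$ is a symplectomorphism. Since $\indsys$ does not depend on the choice of translation (Definition \ref{definitionofthesystolicS1index}), we may assume without loss of generality that the origin lies in the interior of both $K_1$ and $K_2$; translations are symplectomorphisms, so this does not affect the hypothesis. By Theorem \ref{mainsystolics1index},
\[
\indsys(K_j) = \max\{i \in \mathbb{N} \mid c_i^{GH}(K_j) = c_1^{GH}(K_j)\}, \qquad j = 1, 2.
\]
The Gutt--Hutchings capacities $c_i^{GH}$ are defined for Liouville domains and are invariants of the interior up to symplectomorphism: more precisely, if $\operatorname{int}(K_1)$ is symplectomorphic to $\operatorname{int}(K_2)$ then $c_i^{GH}(K_1) = c_i^{GH}(K_2)$ for every $i$, by the monotonicity and conformality properties established in \cite{GH18} together with the fact that a symplectomorphism of the interiors gives, by exhaustion, symplectic embeddings of $K_1$ into $K_2$ and vice versa with arbitrarily small loss on the boundary. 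Hence $c_i^{GH}(K_1) = c_i^{GH}(K_2)$ for all $i$, and in particular the sets $\{i \mid c_i^{GH}(K_1) = c_1^{GH}(K_1)\}$ and $\{i \mid c_i^{GH}(K_2) = c_1^{GH}(K_2)\}$ coincide. Taking maxima yields $\indsys(K_1) = \indsys(K_2)$.
\end{proof}

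The only point requiring care is the passage from a symplectomorphism of the open interiors to equality of capacities defined using the closed convex bodies; this is where one invokes the interior-monotonicity of the Gutt--Hutchings capacities (each $c_i^{GH}$ depends only on the interior, being a supremum/infimum over symplectically embedded sets), so that no regularity or smoothness of $\partial K_j$ is needed. I expect this to be the only step where a reader might want a reference spelled out, and it is immediate from the construction in \cite{GH18}; everything else is a direct substitution into Theorem \ref{mainsystolics1index}.
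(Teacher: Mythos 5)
Your proposal is correct and follows the same route the paper intends: the paper treats this corollary as immediate from Theorem \ref{mainsystolics1index} combined with the invariance of the Gutt--Hutchings capacities under symplectomorphisms of interiors (an invariance the paper itself invokes, e.g.\ in Proposition \ref{w11norm}). Your extra remark spelling out the exhaustion/monotonicity argument for passing from open interiors to the closed bodies is a reasonable elaboration of that same step, not a different approach.
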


\begin{remark*}
From Theorem \ref{mainsystolics1index} and \cite[Theorem 1.2]{GR24}, we deduce that
\[
\indsys(K) = \max\{i \in \mathbb{N} \mid c_i^{EH}(K) = c_1^{EH}(K)\},
\]
where $c_i^{EH}$ denotes the Ekeland–Hofer capacities introduced in \cite{EH89,EH90}.
\end{remark*}

Let $\conv$ denote the set of all convex bodies endowed with the Hausdorff-distance topology. Since $\indsys$ is finite for every convex body, the function 
\[
\indsys: \conv \to \mathbb{N}
\]
is well-defined. Moreover, the function is upper semicontinuous and uniformly bounded.

The upper semicontinuity of $\indsys$ is a straightforward corollary of the previous theorem (see claim (1) of Proposition \ref{uppersemicontuperbound}). The existence of a uniform bound follows from the existence of the Loewner–Behrend–John ellipsoid (see \cite{Joh48} or \cite[Appendix B]{Vit00}). Indeed, using this ellipsoid, we obtain explicit bounds (see claim (2) of Proposition \ref{uppersemicontuperbound}):
\begin{itemize}
    \item For every $K \in \conv$, we have $\indsys(K) \leq 4n^3$. If $K = -K$, then $\indsys(K) \leq 2n^2$.
\end{itemize}

In the case of more restrictive classes, we can obtain optimal estimates. Consider the standard $S^1$-action on $\mathbb{C}^n$ given by
\[
\theta \cdot z = e^{2\pi i \theta} z, \quad \theta \in \mathbb{T}, \ z \in \mathbb{C}^n.
\]
We say that $K \in \conv$ is $S^1$-invariant if, for every $\theta \in \mathbb{T}$, it holds that $\theta \cdot K = K.$

\begin{itemize}
    \item For every $K \in \conv$ such that $K$ is $S^1$-invariant, it holds that $\indsys(K) \leq n$.
    \item For every $K \in \conv$ whose boundary is of class $C^{1,1}$, it holds that $\indsys(K) \leq n$.
\end{itemize}

Since a ball in $\mathbb{R}^{2n}$ has systolic $S^1$-index equal to $n$, we conclude that these estimates are optimal. The estimate in the $S^1$-invariant case follows from claim (3) of Proposition \ref{uppersemicontuperbound}, while the estimate for convex bodies with $C^{1,1}$ boundary follows from Corollary \ref{systolics1indexnupperboundS1} (in this corollary, the estimate is actually established for a larger class of convex bodies described in Definition \ref{definitionuos}). We now use the systolic $S^1$-index to extend the definition of Zoll convex bodies to arbitrary convex bodies. First, we recall the definition of the Zoll property.

Let $K \subset \mathbb{R}^{2n}$ be a convex body with a $C^{1,1}$ boundary that is transverse to all lines through the origin (this condition is equivalent to requiring that the origin lies in $\text{int}(K)$). For such a convex body, $(K, \lambda_0)$ is a Liouville domain, where  
\[
\lambda_0 = \frac{1}{2} \sum_{j=1}^n (x_j \, dy_j - y_j \, dx_j)
\]
is the standard Liouville form, serving as a primitive of the standard symplectic form $\omega_0$.

The Reeb vector field $R$ on $\partial K$ is defined by the system of equations  
\[
i_R \lambda_0 = 1, \quad i_R \omega_0 = 0,
\]
which determines a globally defined flow. Notice that
\[
R(x) = J_0 \nabla \HK(x), \quad x \in \partial K.
\] 

A convex body $K$ with a $C^{1,1}$ boundary is called \textit{Zoll} if all of its Reeb orbits are closed and share a common minimal period. Equivalently, through every point of $\partial K$ there passes a systole. Note that the $C^{1,1}$ regularity of the boundary is required for the existence of the flow, but the definition based on the property that a systole passes through every point makes sense even without the existence of the flow. This notion can be extended to convex bodies satisfying the following property.

\begin{Definition}\label{definitionuos}
A convex body $K$ satisfies the \textit{uniqueness of systoles} property if, through every point, there passes at most one generalized systole, up to reparametrization.
  \end{Definition}

This class of convex bodies includes those with $C^{1,1}$ boundaries but is much broader, as it imposes no regularity conditions beyond those already ensured by convexity.  

\begin{Definition}\label{definitionzoll}
Let $K$ be a convex body that satisfies the uniqueness of systoles property. We say that $K$ is \textit{Zoll} if through every point a generalized systole passes. 
\end{Definition}

Beyond the class of convex bodies satisfying the uniqueness of systoles property, it is unclear how to dynamically extend the notion of Zoll convex bodies, since for such a convex body there exists at least one point through which multiple systoles pass. Motivated by the symplectic nature of the systolic $S^1$-index and its established properties, we propose the following definition.

\begin{Definition}
A convex body $K \subset \mathbb{R}^{2n}$ is a \textit{generalized Zoll convex body} if it satisfies  
\[
\indsys(K) \geq n.
\]
\end{Definition}

This definition is inspired by the work of Ginzburg, Gürel, and Mazzucchelli \cite{GGM21} and by \cite{Mat24}. From \cite[Corollary B.2]{Mat24}, it follows that a smooth and strongly convex body $K$ (i.e., one whose boundary $\partial K$ has positive sectional curvature everywhere) is Zoll if and only if $c^{GH}_1(K) = c^{GH}_n(K)$. The next theorem shows that the assumption of strong convexity can be replaced by mere convexity, and smoothness can be replaced by the uniqueness of systoles property, while the characterization in terms of Gutt--Hutchings capacities remains valid for generalized Zoll convex bodies.

\begin{Theorem}\label{maingeneralizedzoll}
The following statements hold:
\begin{enumerate}
    \item A convex body $K$ is generalized Zoll if and only if $c^{GH}_1(K) = c^{GH}_n(K)$.
    \item A convex body that satisfies the uniqueness of systoles property is generalized Zoll if and only if it is Zoll.
    \item The space of generalized Zoll convex bodies is closed in the space of all convex bodies with respect to the Hausdorff distance.
\end{enumerate}
\end{Theorem}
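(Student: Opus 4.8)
The plan is to dispatch (1) and (3) quickly from material already in place and to spend the real effort on (2).

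For (1): Theorem~\ref{mainsystolics1index} identifies $\indsys(K)$ with $\max\{i\in\mathbb N\mid c_i^{GH}(K)=c_1^{GH}(K)\}$. Since the Gutt--Hutchings capacities form a nondecreasing sequence $c_1^{GH}(K)\le c_2^{GH}(K)\le\cdots$, the single equality $c_n^{GH}(K)=c_1^{GH}(K)$ is equivalent to $c_i^{GH}(K)=c_1^{GH}(K)$ for all $i\le n$, i.e.\ to $\indsys(K)\ge n$, which is the definition of generalized Zoll. For (3): by Proposition~\ref{uppersemicontuperbound}(1) the function $\indsys\colon\conv\to\mathbb N$ is upper semicontinuous, and the superlevel set $\{K\mid\indsys(K)\ge n\}$ of an upper semicontinuous function is closed; this set is exactly the set of generalized Zoll convex bodies. (Alternatively one may combine (1) with the Hausdorff-continuity of $c_1^{GH}-c_n^{GH}$ on $\conv$.)

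For (2), assume $K$ satisfies the uniqueness of systoles property; the case $n=1$ is trivial, so take $n\ge 2$. The crux is a structural description of $\sys(\partial K)$. First I would study the evaluation map $\ev\colon\sys(\partial K)\to\partial K$, $\gamma\mapsto\gamma(0)$, which is continuous for the uniform topology; since the $S^1$-action sweeps the basepoint along each systole, its image is exactly $Z:=\bigcup_{\gamma\in\sys(\partial K)}\gamma(\mathbb T)$, the union of all systoles. The key claim is that $\ev$ is injective. Indeed, if $\gamma_1(0)=\gamma_2(0)$, uniqueness of systoles provides an orientation-preserving bi-Lipschitz $\tau\colon\mathbb T\to\mathbb T$ with $\tau(0)=0$ and $\gamma_2=\gamma_1\circ\tau$; for a.e.\ $t$ both $\dot\gamma_1(\tau(t))$ and $\dot\gamma_2(t)=\dot\tau(t)\,\dot\gamma_1(\tau(t))$ lie in the convex set $T_{\min}J_0\partial\HK(\gamma_1(\tau(t)))$, which, by the Euler identity $\HK(x)=\tfrac12\langle\partial\HK(x),x\rangle$, sits in the affine hyperplane $\{w\mid\langle w,J_0\gamma_1(\tau(t))\rangle=2T_{\min}\}$ missing the origin; since $\langle\dot\gamma_1(\tau(t)),J_0\gamma_1(\tau(t))\rangle=2T_{\min}\ne 0$, this forces $\dot\tau\equiv 1$, hence $\tau=\mathrm{id}$ and $\gamma_1=\gamma_2$. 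As $\sys(\partial K)$ is compact (Proposition~\ref{uniformtopology}), $\ev$ is a homeomorphism onto $Z$; transporting the $S^1$-action — free because a systole cannot be a non-trivial iterate, by minimality of its action — realizes $Z$ as a free $S^1$-space with $\indsys(K)=\indfr(Z)$. By Gleason's theorem $Z\to B:=Z/S^1$ is a principal $S^1$-bundle, so $\check H^*_{S^1}(Z)\cong\check H^*(B)$ carrying $e$ to the mod-$2$ Euler class $e_B$, and $\indfr(Z)=\sup\{i+1\mid e_B^{\,i}\ne 0\text{ in }\check H^{2i}(B;\mathbb Z_2)\}$.

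Now $K$ is Zoll exactly when $Z=\partial K\cong S^{2n-1}$. In that case $B$ is compact, connected, of covering dimension $2n-2$, and the Gysin sequence of $S^{2n-1}\to B$, together with $\check H^*(S^{2n-1};\mathbb Z_2)$ being concentrated in degrees $0$ and $2n-1$, shows cup product with $e_B$ gives isomorphisms $\check H^0(B)\xrightarrow{\sim}\check H^2(B)\xrightarrow{\sim}\cdots\xrightarrow{\sim}\check H^{2n-2}(B)$; hence $e_B^{\,n-1}\ne 0$ while $e_B^{\,n}\in\check H^{2n}(B)=0$, so $\indsys(K)=n$ and $K$ is generalized Zoll. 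Conversely, if $K$ is generalized Zoll but not Zoll, then $Z\subsetneq S^{2n-1}$ is a proper closed subset; Alexander duality gives $\check H^k(Z;\mathbb Z_2)=0$ for $k\ge 2n-1$, while $\dim B=\dim Z-1\le 2n-2$ gives $\check H^j(B;\mathbb Z_2)=0$ for $j\ge 2n-1$. Feeding these into the Gysin sequence of $Z\to B$ in degree $2n$ yields an isomorphism $\check H^{2n-2}(B)\xrightarrow{\cup e_B}\check H^{2n}(B)=0$, so $e_B^{\,n-1}=0$ and $\indsys(K)\le n-1$, contradicting generalized Zollness; hence $K$ is Zoll, completing (2).

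The hard part will be the injectivity of $\ev$ under the uniqueness of systoles property: this is where the genuine geometry enters, upgrading ``uniqueness up to reparametrization'' to ``uniqueness of the $\HK$-gradient parametrization with prescribed basepoint'', and the positive $2$-homogeneity of $\HK$ is essential (it places $\partial\HK(x)$ on a hyperplane off the origin). A secondary technical point is to run the dimension-theoretic and Gysin arguments rigorously in Alexander--Spanier cohomology for the possibly non-manifold space $Z$; here one leans on Gleason's theorem for local triviality of $Z\to B$, on $\dim(Z/S^1)=\dim Z-1$ for compact metrizable $Z$, and on the validity of the Leray/Gysin sequence for $S^1$-bundles over paracompact bases.
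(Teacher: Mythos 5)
Your treatments of (1) and (3) are exactly the paper's: (1) is Theorem \ref{mainsystolics1index} plus monotonicity of the capacities, and (3) is the upper semicontinuity from Proposition \ref{uppersemicontuperbound}. For (2) your skeleton agrees with the paper's (evaluation map $\ev$, a Gysin sequence, and the fact that the union of systoles sits inside $\partial K\cong S^{2n-1}$), but the implementation is genuinely different. The paper never leaves the Borel construction: it asserts that the uniqueness of systoles property makes $\ev$ a homeomorphism onto its image $P\subset\partial K$, proves $H^{2i-1}(P)=0$ for $i\ge n$ by the continuity (tautness) of Alexander--Spanier cohomology over small open neighborhoods of $P$ inside $\partial K$ minus a point, and then extracts $\indfr(\sys(\partial K))<n$ (resp.\ $\ge n$ in the Zoll case, using $H^*(S^{2n-1})$) from the Gysin sequence of $\sys(\partial K)\times ES^1\to \sys(\partial K)\times_{S^1}ES^1$. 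You instead pass to the honest quotient $B=Z/S^1$ via Gleason's theorem, and replace the neighborhood-limit argument by Alexander duality for the compact proper subset $Z\subsetneq S^{2n-1}$ together with $\dim B=\dim Z-1$ and the vanishing of \v{C}ech cohomology above the covering dimension. What your route buys: a one-line vanishing statement via Alexander duality, and--- a genuine improvement---an actual proof, via the Euler identity $\langle \partial\HK(x),x\rangle=2\HK(x)$, that the reparametrization between two systoles with the same value at $0$ must be the identity, i.e.\ that $\ev$ is injective; the paper only asserts this equivalence. What it costs: the standard-but-nontrivial inputs you correctly flag, namely Gleason local triviality, dimension theory of free $S^1$-quotients, and above all the identification $H^*_{S^1}(Z)\cong \check H^*(B)$ carrying $e$ to the Euler class in Alexander--Spanier cohomology for a free action on a compact metric space---all of which the paper's Borel-construction argument sidesteps. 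One point to watch in your injectivity step: uniqueness ``up to reparametrization'' a priori only gives $\gamma_2=\gamma_1\circ\tau$ with $\gamma_1(\tau(0))=\gamma_1(0)$, so $\tau(0)$ need not be $0$ if the systole has a self-intersection at its basepoint; your pairing argument then yields $\dot\tau\equiv 1$, i.e.\ $\gamma_2$ is a time-shift of $\gamma_1$, and the nontrivial-shift case must still be excluded (or one adopts the convention, as in the paper's definition of $\sim$, that reparametrizations fix $0$). Since the paper glosses over exactly the same point when declaring the uniqueness property equivalent to injectivity of $\ev$, this is a shared subtlety rather than a gap specific to your argument.
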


Statement (2) of this theorem justifies the term "generalized Zoll convex body." Note that, since the systolic $S^1$-index is invariant under symplectic maps, the definition of generalized Zoll convex bodies is independent of symplectomorphisms of their interiors. In other words, being a generalized Zoll convex body is a symplectic notion. Since a ball is a Zoll convex body, we have the following immediate corollary.

\begin{Corollary}\label{zollball}
If the interior of a convex body $K \subset \mathbb{R}^{2n}$ is symplectomorphic to the interior of a ball, then $K$ is a generalized Zoll convex body. In particular, if $K$ also satisfies the uniqueness of systoles property, then $K$ is Zoll.
\end{Corollary}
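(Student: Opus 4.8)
The plan is to reduce everything to Theorem \ref{maingeneralizedzoll}(1) together with the symplectic invariance of the systolic $S^1$-index (the Corollary following Theorem \ref{mainsystolics1index}). First I would recall that a round ball $B^{2n}(r)$ is Zoll: all Reeb orbits on $\partial B^{2n}(r)$ are closed with common minimal period, so through every point of the boundary a systole passes, and the ball trivially satisfies the uniqueness of systoles property. Hence by Definition \ref{definitionzoll} (or directly) $B^{2n}(r)$ is Zoll, and by Theorem \ref{maingeneralizedzoll}(2) it is generalized Zoll, i.e. $\indsys(B^{2n}(r)) \geq n$. (In fact equality holds, as noted in the text, but we only need the inequality.) Equivalently, one can compute $c_i^{GH}(B^{2n}(r))$ directly from \cite{GH18}: the Gutt--Hutchings capacities of the ball are $c_i^{GH}(B^{2n}(r)) = \lceil i/n \rceil \pi r^2$, so $c_1^{GH}(B^{2n}(r)) = \pi r^2 = c_n^{GH}(B^{2n}(r))$, and Theorem \ref{maingeneralizedzoll}(1) gives that the ball is generalized Zoll.

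Now suppose $K \subset \mathbb{R}^{2n}$ is a convex body whose interior is symplectomorphic to $\operatorname{int}(B^{2n}(r))$ for some $r > 0$. By the Corollary to Theorem \ref{mainsystolics1index} (symplectic invariance of the systolic $S^1$-index), we have $\indsys(K) = \indsys(B^{2n}(r)) \geq n$. Therefore $K$ is a generalized Zoll convex body by definition. This proves the first assertion.

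For the second assertion, assume in addition that $K$ satisfies the uniqueness of systoles property (Definition \ref{definitionuos}). Then $K$ is a generalized Zoll convex body satisfying the uniqueness of systoles property, so Theorem \ref{maingeneralizedzoll}(2) applies and yields that $K$ is Zoll in the sense of Definition \ref{definitionzoll}, i.e. through every point of $\partial K$ there passes a (unique) generalized systole. This completes the proof.

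The only genuine content here is inherited from Theorem \ref{maingeneralizedzoll} and the symplectic invariance statement; the remaining step, verifying that the ball is generalized Zoll, is immediate either from its Reeb dynamics or from the explicit formula for its Gutt--Hutchings capacities, so there is no real obstacle beyond citing the already-established results correctly.
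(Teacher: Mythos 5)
Your proposal is correct and follows essentially the same route as the paper: the paper treats this as an immediate consequence of the fact that the ball is Zoll (hence generalized Zoll, so $\indsys \geq n$), the symplectic invariance of $\indsys$ from Theorem \ref{mainsystolics1index}, and statement (2) of Theorem \ref{maingeneralizedzoll} for the final claim. Your alternative verification via the explicit Gutt--Hutchings capacities of the ball is a harmless equivalent way to see the base case.
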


In addition, in Corollary~\ref{systolics1indexnupperboundS1}, we show that the only $S^1$-invariant generalized Zoll convex body is the ball. Hence, the only known examples of generalized Zoll convex bodies that are not Zoll in the usual sense (see Definition~\ref{definitionzoll}) arise from convex bodies that do not satisfy the uniqueness of systoles property and whose interiors are symplectomorphic to the interior of a ball (see \cite{Tra95, Sch05, LMS13, Rud22, ORS23}). One such example is the Lagrangian product of the unit ball in the $\|\cdot\|_\infty$-norm and the unit ball in the $\|\cdot\|_1$-norm, which we denote by $B_\infty \times B_1$.

As already proved, if $K \subset \mathbb{R}^{2n}$ satisfies the uniqueness of systoles property, then $K$ is a generalized Zoll convex body if and only if a systole passes through every point. However, this is not true in general. In particular, the condition that a systole passes through every point is not sufficient (see Example~\ref{polydicsevaluation} concerning the polydisc). We also show in Example~\ref{nonsmoothzollevaluation} that $B_\infty \times B_1$ is a generalized Zoll convex body that does not satisfy the uniqueness of systoles property, yet a generalized systole passes through every point.  

We conclude this introduction with a list of some open questions.

\begin{question}
What is the global maximum of the function $\indsys: \conv \to \mathbb{N}$? Does it hold that for every convex body $K \subset \mathbb{R}^{2n}$, $\indsys(K) \leq n$?
\end{question}

\begin{question}
If $K$ is a generalized Zoll convex body, does a generalized systole pass through every point? Is the converse true if $\sys(\partial K)$ is connected?
\end{question}

\begin{question}
If $K \subset \mathbb{R}^{2n}$ is a generalized Zoll convex body, is it true that the space $\sys(\partial K)$ has the cohomology type of a sphere $S^{2n-1}$?
\end{question}

\begin{question}
What is the form of Gutt–Hutchings capacities for $k > n$ for generalized Zoll convex bodies (see \cite{MR23} for the answer in the smooth strongly convex case)?
\end{question}

\begin{question}
Are there exotic generalized Zoll convex bodies, i.e., those with a systolic ratio different from that of a ball? In particular, are there generalized Zoll convex bodies that do not belong to the Hausdorff closure of the space of smooth Zoll convex bodies?  
\end{question}

\begin{question}
Are generalized Zoll convex bodies local maximizers of the systolic ratio in the space of convex bodies with respect to the Hausdorff distance topology, either in general or in the centrally symmetric case (see \cite{AB23, ABE23} for partial results in the smooth setting)?
\end{question}

\textbf{Acknowledgements.} I would like to express my gratitude to my advisor, Alberto Abbondandolo, for his invaluable guidance and support throughout my PhD studies. I would also like to thank Souheib Allout for several valuable discussions.  

This project is supported by the SFB/TRR 191 ‘Symplectic Structures in Geometry, Algebra and Dynamics,’ funded by the DFG (Projektnummer 281071066 - TRR 191).

\section{Spaces of generalized and centralized generalized systoles
}\label{sectiontopology}

To prove Theorem~\ref{mainsystolics1index}, we need to introduce the space of centered generalized systoles.

Let
\[
\pi_0(\gamma) = \gamma - \int\limits_{\mathbb{T}} \gamma(t) \, dt
\]
denote the orthogonal $L^2$ projection onto the space of closed curves in $\mathbb{R}^{2n}$ with zero mean.

We define the set of centralized generalized systoles, denoted by $\sys_0(\partial K)$, as
\[
\sys_0(\partial K) := \pi_0(\sys(\partial K)) = \{\pi_0(\gamma) \mid \gamma \in \sys(\partial K)\}.
\]

Alternatively, this space can be interpreted as the space of derivatives of generalized systoles.

\subsection{On the choice of topology}

In this subsection, we consider the problem of choosing a natural topology on the spaces $\sys(\partial K)$ and $\sys_0(\partial K)$. First, we recall some classical results from convex analysis.

Let $f: \mathbb{R}^m \to \mathbb{R}$ be a convex function. Such a function is locally Lipschitz continuous. Moreover, the directional derivative 
\[
f'(x;v) := \lim_{t \downarrow 0} \frac{f(x+tv)-f(x)}{t}
\]
is finite for every $x \in \mathbb{R}^m$ and $v \in \mathbb{R}^m$.

The function $f'(x; \cdot)$ is positively homogeneous, subadditive, and Lipschitz continuous. Additionally, the function $f'$ is upper semicontinuous as a function of $(x, v)$.

Let $\langle \cdot, \cdot \rangle$ denote the standard scalar product. We can express the subdifferential of $f$ at $x$, denoted by $\partial f(x)$, as 
\[
\partial f(x) := \{\xi \in \mathbb{R}^m \mid f'(x,v) \geq \langle \xi, v \rangle, \quad \forall v \in \mathbb{R}^m \}.
\]
The set $\partial f(x)$ is a non-empty, convex, and compact subset of $\mathbb{R}^m$. Moreover, $f'(x; \cdot)$ is the support function of $\partial f(x)$, and therefore it holds that
\[
f'(x;v) = \max\{\langle v, \xi \rangle \mid \xi \in \partial f(x)\}.
\]

For more details on these results, we refer the reader to \cite{Cla83}.

The next proposition is part of the general compactness theory for weak solutions of differential equations, as presented in \cite[Theorem 3.1.7]{Cla83}. For completeness, we provide a self-contained proof here.

\begin{proposition}\label{uniformtopology}
Let $K\subset \mathbb{R}^{2n}$ be a convex body whose interior contains the origin. On the spaces $\sys(\partial K)$ and $\sys_0(\partial K)$, the compact-open topology and weak$^*$-$W^{1,\infty}$ topology are equivalent. Moreover, the spaces $\sys(\partial K)$ and $\sys_0(\partial K)$ are compact in this topology.
\end{proposition}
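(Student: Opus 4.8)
The plan is to exploit the key structural feature of $\sys(\partial K)$: it is a set of absolutely continuous loops $\gamma:\mathbb{T}\to\partial K$ that are uniformly bounded in $W^{1,\infty}$, with derivatives constrained by the differential inclusion $\dot\gamma(t)\in T_{\min}J_0\partial\HK(\gamma(t))$ a.e. First I would record the uniform bound: since $\partial K$ is compact and the multifunction $\partial\HK$ is upper semicontinuous, $\partial\HK(\partial K)$ is bounded, so there is a constant $M$ with $|\dot\gamma(t)|\le M T_{\min}$ a.e. for every $\gamma\in\sys(\partial K)$; together with $\gamma(\mathbb{T})\subset\partial K$ this shows $\sys(\partial K)$ sits in a fixed ball of $W^{1,\infty}(\mathbb{T},\mathbb{R}^{2n})$, and the same for $\sys_0(\partial K)$ after applying $\pi_0$, which is continuous and $1$-Lipschitz in every relevant norm. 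On such a norm-bounded subset of $W^{1,\infty}$, the weak$^*$ topology of $W^{1,\infty}$ is metrizable and, by Arzelà–Ascoli (the loops are uniformly Lipschitz, hence equicontinuous), coincides with the $C^0$ topology: a sequence $\gamma_k$ converges uniformly iff it converges in $C^0$ iff, after passing to subsequences, $\dot\gamma_k\rightharpoonup^* \dot\gamma$ in $L^\infty$. This gives the equivalence of topologies on the \emph{closure}, so the real content is closedness.

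Next I would prove that $\sys(\partial K)$ is $C^0$-closed, equivalently sequentially closed (both topologies being metrizable on the bounded set). Take $\gamma_k\in\sys(\partial K)$ with $\gamma_k\to\gamma$ uniformly. Then $\gamma(\mathbb{T})\subset\partial K$ since $\partial K$ is closed, and $\gamma$ is Lipschitz with the same constant, hence absolutely continuous. Passing to a subsequence, $\dot\gamma_k\rightharpoonup^*\dot\gamma$ in $L^\infty(\mathbb{T},\mathbb{R}^{2n})$ (weak$^*$ limits of derivatives are derivatives of the $C^0$-limit, by testing against smooth functions). The crucial step is to show the limit still satisfies the differential inclusion $\dot\gamma(t)\in T_{\min}J_0\partial\HK(\gamma(t))$ a.e. This is the standard convex-closure argument (Mazur's lemma / the theory in \cite[Theorem 3.1.7]{Cla83}): the inclusion is equivalent to the family of scalar inequalities $\langle \dot\gamma_k(t), J_0 v\rangle \le T_{\min}\,\HK'(\gamma_k(t);v)$ for a countable dense set of $v\in\mathbb{R}^{2n}$ — using that $\partial\HK(x)$ is the set with support function $\HK'(x;\cdot)$ — and one passes to the limit using upper semicontinuity of $(x,v)\mapsto \HK'(x;v)$ together with the fact that weak$^*$ convergence plus pointwise upper bounds converging uniformly (via $\gamma_k\to\gamma$ uniformly and u.s.c.) survive integration against nonnegative test functions. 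Concretely, for $\varphi\ge 0$ smooth and $v$ fixed, $\int \varphi\langle\dot\gamma_k,J_0v\rangle\,dt \to \int\varphi\langle\dot\gamma,J_0v\rangle\,dt$, while $\limsup_k \int \varphi\, T_{\min}\HK'(\gamma_k;v)\,dt \le \int\varphi\,T_{\min}\HK'(\gamma;v)\,dt$ by u.s.c. and dominated convergence; since $\varphi$ is arbitrary this yields the pointwise inequality a.e., hence $\dot\gamma(t)\in T_{\min}J_0\partial\HK(\gamma(t))$ a.e. Finally $\mathcal{A}(\gamma)=\lim\mathcal{A}(\gamma_k)=T_{\min}$ because the action is continuous in $C^0$ on uniformly Lipschitz loops (it is a bounded bilinear form in $(\gamma,\dot\gamma)$ and $\dot\gamma_k\rightharpoonup^*\dot\gamma$, $\gamma_k\to\gamma$), so $\gamma$ has minimal action, i.e. $\gamma\in\sys(\partial K)$. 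One still must check $\gamma$ is nonconstant with derivative bounded below a.e.; this follows since $\mathcal{A}(\gamma)=T_{\min}>0$ forces $\gamma$ nonconstant, and the inclusion shows $|\dot\gamma(t)|$ is comparable to $T_{\min}$ wherever $\partial\HK$ is bounded below away from $0$ — which holds because $0\in\mathrm{int}(K)$ implies $\HK'(x;x)=2\HK(x)=2>0$, so $0\notin\partial\HK(x)$ for $x\in\partial K$, and by u.s.c. and compactness $\inf_{x\in\partial K}\mathrm{dist}(0,\partial\HK(x))>0$.

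With closedness in hand, $\sys(\partial K)$ is a closed, norm-bounded subset of $W^{1,\infty}$, hence weak$^*$-compact (Banach–Alaoglu, using that $W^{1,\infty}=(W^{1,1})^*$ in the appropriate sense, or directly via Arzelà–Ascoli plus the closure argument above to identify the $C^0$-limit as again lying in the set), and the $C^0$ and weak$^*$ topologies agree on it; the identical argument applies to $\sys_0(\partial K)=\pi_0(\sys(\partial K))$, which is the continuous image of a compact set, hence compact, with the two topologies again coinciding since $\pi_0$ is a homeomorphism for both. I expect the main obstacle to be the stability of the differential inclusion under the joint limit — reconciling the weak$^*$ convergence of derivatives with the merely \emph{upper} semicontinuous (and set-valued) right-hand side $\partial\HK$ — and I would organize that step carefully around the support-function characterization $\HK'(x;v)=\max\{\langle v,\xi\rangle:\xi\in\partial\HK(x)\}$ and the u.s.c. of $\HK'$ recalled just before the proposition, citing \cite[Theorem 3.1.7]{Cla83} for the general principle.
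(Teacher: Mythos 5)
Your proposal is correct and follows essentially the same route as the paper: uniform $W^{1,\infty}$ bound from the boundedness of $\partial \HK$ on $\partial K$, Arzelà--Ascoli together with metrizability of the weak$^*$ topology on bounded sets, and passage to the limit in the support-function reformulation of the inclusion $\dot{\gamma}(t) \in T_{\min} J_0 \partial \HK(\gamma(t))$ using upper semicontinuity of $\HK'$ and weak$^*$ convergence of the derivatives (the paper integrates over arbitrary measurable subsets of $\mathbb{T}$ rather than against nonnegative test functions, which is an equivalent device, and it does not need your extra checks on the action and nonconstancy, since the inclusion with factor $T_{\min}$ already encodes the action). One small caveat: $\pi_0$ need not be injective on $\sys(\partial K)$ (see Example \ref{noninjective}), so ``$\pi_0$ is a homeomorphism'' is not the right justification for $\sys_0(\partial K)$; however, the argument you also give --- $\sys_0(\partial K)$ is the continuous image of a compact set, hence compact, and the coarser Hausdorff uniform topology must then agree with the weak$^*$ one --- is exactly what is needed.
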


\begin{proof}
Let $K$ be a convex body whose interior contains the origin. We can assume without loss of generality that the action on the space of generalized systoles is $1$. Then $\gamma \in \sys(\partial K)$ if and only if $\gamma: \mathbb{T} \to \partial K$ is Lipschitz and satisfies the weak equation
\begin{equation}\label{systoleequation}
    \dot{\gamma}(t) \in J_0 \partial \HK(\gamma(t)), \quad \text{a.e.}
\end{equation}

This equation is equivalent to the condition that 
\begin{equation}\label{systoleequationderivative}
\HK'(\gamma(t); v) + \langle J_0 \dot{\gamma}(t), v \rangle \geq 0, \quad \forall v \in \mathbb{R}^{2n}    
\end{equation}
holds for almost every $t \in \mathbb{T}$, due to the relation between the subdifferential and the directional derivative. From \eqref{systoleequation} and the fact that $\partial \HK$ is bounded on $\partial K$, we get that the space $\sys(\partial K)$ is uniformly bounded in the $W^{1,\infty}$-norm. Therefore, if $\gamma_i \in \sys(\partial K)$ is an arbitrary sequence, it has a convergent subsequence in the weak$^*$-$W^{1,\infty}$ topology. Weak$^*$-$W^{1,\infty}$ topology convergence implies uniform convergence (by Arzelà-Ascoli), and therefore we can assume that $\gamma_i$ converges to some $\gamma$ uniformly and in the weak$^*$-$W^{1,\infty}$ topology (we will keep the same notation for the subsequence). Since $\partial K$ is compact, we conclude that $\gamma: \mathbb{T} \to \partial K$. Moreover, the uniform $W^{1,\infty}$-bound on $\sys(\partial K)$ implies that $\gamma$ is Lipschitz. Now we need to show that $\gamma$ is a solution of equation \eqref{systoleequation}.

Let $U \subset \mathbb{T}$ be any measurable subset. From the reformulation of \eqref{systoleequation} given by \eqref{systoleequationderivative}, we have that 
\[
\int_{U} \left( \HK'(\gamma_i(t); v) + \langle J_0 \dot{\gamma}_i(t), v \rangle \right) \, dt \geq 0.
\]

By the upper semi-continuity of $\HK'$ and the uniform and weak$^*$-$W^{1,\infty}$ convergence of $\gamma_i$, the previous inequality implies the following:

\begin{align*}
 0 &\leq \limsup_{i \to \infty} \int_{U} \left( \HK'(\gamma_i(t); v) + \langle J_0 \dot{\gamma}_i(t), v \rangle \right) \, dt \\
 &\leq \limsup_{i \to \infty} \int_{U} \HK'(\gamma_i(t); v) \, dt + \limsup_{i \to \infty} \int_{U} \langle J_0 \dot{\gamma}_i(t), v \rangle \, dt \\
 &\leq \int_{U} \left( \HK'(\gamma(t); v) + \langle J_0 \dot{\gamma}(t), v \rangle \right) \, dt.
\end{align*}

This ensures the inequality 
\[
\HK'(\gamma(t); v) + \langle J_0 \dot{\gamma}(t), v \rangle \geq 0
\] 
almost everywhere. Continuity of $\HK'$ in the second variable ensures that the inequality 
\[
\HK'(\gamma(t); v) + \langle J_0 \dot{\gamma}(t), v \rangle \geq 0
\]
holds for every $v \in \mathbb{R}^{2n}$ outside a fixed measure-zero subset of $\mathbb{T}$. Therefore, we conclude that \eqref{systoleequation} holds, and hence $\gamma \in \sys(\partial K)$. Since the weak$^*$-$W^{1,\infty}$ topology is metrizable on bounded subsets of $W^{1,\infty}$, we have shown that $\sys(\partial K)$ is compact in this topology. This implies that any Hausdorff topology on $\sys(\partial K)$ which is coarser than the weak$^*$-$W^{1,\infty}$ topology is equivalent to it. In particular, this holds for the topology induced by the uniform norm. The same conclusions follow for $\sys_0(\partial K)$.

\end{proof}

Let $B_\infty \times B_1 \subset \mathbb{R}^4$ be the Lagrangian product of the unit ball in the $\|\cdot\|_\infty$-norm and the unit ball in the $\|\cdot\|_1$-norm.

\begin{proposition}\label{w11norm}
There exists a sequence $\gamma_n: \mathbb{T} \to \partial(B_\infty \times B_1)$ of piecewise smooth generalized systoles that converges uniformly to the piecewise smooth generalized systole $\gamma: \mathbb{T} \to \partial(B_\infty \times B_1)$ but does not converge to $\gamma$ in the $W^{1,1}$-norm.
\end{proposition}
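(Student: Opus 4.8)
The plan is to realize the sequence as a family of ``zig-zag'' modifications of one explicit systole, using the fact that $B_1$ has vertices, so that $\partial(B_\infty\times B_1)$ carries generalized characteristics whose velocity is free to vary inside a two-dimensional cone. Write $\mathbb{R}^4=\mathbb{R}^2\times\mathbb{R}^2$ with coordinates $(q,p)$, so $B_\infty\times B_1=\{(q,p)\mid q\in B_\infty,\ p\in B_1\}$ and $\partial(B_\infty\times B_1)=(\partial B_\infty\times B_1)\cup(B_\infty\times\partial B_1)$. At a boundary point the outward normal cone of $B_\infty\times B_1$ is $N_{B_\infty}(q)\times\{0\}$ when $p\in\operatorname{int}(B_1)$ and $\{0\}\times N_{B_1}(p)$ when $q\in\operatorname{int}(B_\infty)$, so with the parametrization $\dot\gamma(t)\in T_{\min}J_0\,\partial\HK(\gamma(t))$ a generalized systole alternates between arcs along which $q$ is frozen on $\partial B_\infty$ while $p$ traverses a segment in $B_1$, and arcs along which $p$ is frozen on $\partial B_1$ while $q$ traverses a segment in $B_\infty$. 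I would start from the four-arc ``bouncing'' loop $\gamma$ in which: $q$ runs from $(-1,0)$ to $(1,0)$ in $B_\infty$ while $p$ rests at the vertex $(-1,0)$ of $B_1$; then $p$ runs from $(-1,0)$ to $(1,0)$ in $B_1$ while $q$ rests at $(1,0)$; then $q$ runs back to $(-1,0)$ while $p$ rests at the vertex $(1,0)$ of $B_1$; then $p$ runs back to $(-1,0)$ while $q$ rests at $(-1,0)$. The four arcs fit together into a Lipschitz loop, and a direct computation gives $\mathcal{A}(\gamma)=4$, which equals $T_{\min}=\min\sigma(\partial(B_\infty\times B_1))=c_1^{GH}(B_\infty\times B_1)$ (see Example~\ref{nonsmoothzollevaluation}); hence $\gamma$ is a piecewise smooth generalized systole.

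Next I would perturb $\gamma$ only along its first arc. There $p$ rests at the \emph{vertex} $(-1,0)$ of $B_1$, so $\partial\HK$, at the points of this arc, is a non-degenerate segment contained in the $p$-plane, and therefore $T_{\min}J_0\,\partial\HK$ is a non-degenerate segment contained in the $q$-plane: the admissible velocities along this arc are exactly the vectors $(\,(c,r),0\,)$ with $c>0$ a fixed constant and $r$ ranging over a fixed interval $[-c',c']$, $c'>0$, while $\gamma$ itself has $r\equiv0$. Fix $\varepsilon\in(0,c']$. For each $n\ge1$, partition the (fixed) time-interval $I$ of the first arc into $2n$ subintervals of equal length and let $\gamma_n$ coincide with $\gamma$ outside $I$ and, on $I$, have $\dot q$ equal alternately to $(c,\varepsilon)$ and $(c,-\varepsilon)$ on consecutive subintervals, with $\dot p\equiv0$. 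Then $\gamma_n$ is piecewise linear, hence piecewise smooth; its first component still runs from $(-1,0)$ to $(1,0)$ over $I$ because the $\pm\varepsilon$ contributions cancel; it stays in $\operatorname{int}(B_\infty)$ over the interior of $I$, so $\gamma_n$ takes values in $\partial(B_\infty\times B_1)$; and $\dot\gamma_n(t)\in T_{\min}J_0\,\partial\HK(\gamma_n(t))$ for a.e.\ $t$. Thus $\gamma_n\in\chara(\partial(B_\infty\times B_1))$.

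It then remains to see that $\gamma_n$ is actually a systole and that it realizes the claimed (non)convergences. Since $\gamma_n$ and $\gamma$ agree outside $I$ and, on $I$, are two paths with the same endpoints lying in the affine plane $\mathbb{R}^2\times\{(-1,0)\}$, on which $\omega_0=d\lambda_0$ vanishes, Stokes' theorem gives $\mathcal{A}(\gamma_n)=\mathcal{A}(\gamma)=T_{\min}$, hence $\gamma_n\in\sys(\partial(B_\infty\times B_1))$. On $I$, the $q_2$-component of $\gamma_n$ is a sawtooth of amplitude $O(1/n)$ about the corresponding segment of $\gamma$, all other components being unchanged, so $\gamma_n\to\gamma$ uniformly. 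On the other hand $\dot\gamma_n-\dot\gamma$ equals $(\,(0,\pm\varepsilon),0\,)$ a.e.\ on $I$ and vanishes off $I$, so $\|\dot\gamma_n-\dot\gamma\|_{L^1(\mathbb{T})}=\varepsilon\,|I|>0$ for every $n$; therefore $\gamma_n\not\to\gamma$ in $W^{1,1}$.

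I expect the essential point to be geometric rather than analytic: one must produce a genuine systole containing a subarc whose velocity lies in the \emph{relative interior} of a two-dimensional face of the multivalued right-hand side $J_0\,\partial\HK$ — this is exactly where the non-smoothness of $B_1$ is used, and the construction would collapse for a strictly convex body — and one must then re-parametrize the zig-zag so that it remains an admissible Lipschitz loop, traversed in the prescribed total time and of minimal action; the latter is precisely what the cancellation of the $\pm\varepsilon$ pieces together with the Stokes computation guarantee. The remaining verifications — piecewise smoothness, $\mathcal{A}(\gamma)=4$, staying in $\operatorname{int}(B_\infty)$, and the amplitude and $L^1$ estimates — are routine.
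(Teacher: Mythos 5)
Your proposal is correct and follows essentially the same route as the paper: the same base ``rectangle'' systole on $\partial(B_\infty\times B_1)$ whose first arc rests at a vertex of $B_1$, the same zig-zag modifications exploiting the two-dimensional cone of admissible velocities there, and the same uniform-convergence versus $L^1$-derivative-gap conclusion. The only cosmetic differences are that you allow a general slope $\varepsilon\in(0,c']$ instead of the extremal one and confirm minimality of the action via a Stokes argument in the Lagrangian plane, whereas the paper deduces it directly from the inclusion $\dot{\gamma}_n(t)\in 4J_0\,\partial H_K(\gamma_n(t))$.
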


\begin{proof}
Let $K = B_\infty \times B_1$. Since $\text{int}(K)$ is symplectomorphic to the ball  
\[
B(4) = \{(z_1, z_2) \in \mathbb{C}^2 \mid \pi(|z_1|^2 + |z_2|^2) < 4\},
\]
it follows that the generalized systoles on $\partial K$ have action 4 (see \cite{LMS13}). Indeed, by the symplectomorphism, we have $c^{GH}_1(K) = c^{GH}_1(\overline{B}(4)) = 4$. Moreover, since for convex bodies $c_1^{GH}$ represents the minimum of the action spectrum, i.e., the action on the space $\sys(\partial K)$, the conclusion follows.

The function $ \HK: \mathbb{R}^4 \to \mathbb{R} $ is defined as 
\[
\HK(x_1, x_2, y_1, y_2) = \max\{\|(x_1, x_2)\|_\infty^2, \|(y_1, y_2)\|_1^2\}.
\]

We define the loop $\gamma: \mathbb{T} \to \partial K$ as
\[
\gamma(t) = 
\begin{cases}
\big(-1 + 8t, 0, -1, 0\big), & t \in \big[0, \frac{1}{4}\big], \\
\big(1, 0, -1 + 8(t - \frac{1}{4}), 0\big), & t \in \big[\frac{1}{4}, \frac{1}{2}\big], \\
\big(1 - 8(t - \frac{1}{2}), 0, 1, 0\big), & t \in \big[\frac{1}{2}, \frac{3}{4}\big], \\
\big(-1, 0, 1 - 8(t - \frac{3}{4}), 0\big), & t \in \big[\frac{3}{4}, 1\big].
\end{cases}
\]

We claim that this is a generalized systole. This function is piecewise smooth and therefore Lipschitz. For $(x, 0, -1, 0)$ where $x \in [-1, 1]$, we have that $\partial \HK(x, 0, -1, 0)$ contains
\[
\text{conv}\{(0, 0, -2, -2), (0, 0, -2, 2)\},
\]
which is the convex hull of these two points. This follows from the upper semicontinuity of the subgradient as a multifunction and the fact that
\[
\partial \HK\left(x, 0, -1, 0\right) = \partial \|(0, -1)\|_1^2 = \text{conv}\{(-2, -2), (-2, 2)\}, \quad \forall x \in (-1, 1),
\]
which holds because $\HK(x_1, x_2, y_1, y_2) = \|(y_1, y_2)\|_1^2$ in the neighborhood of the point $(x, 0, -1, 0)$ when $x \in (-1, 1)$.

In particular, we have that $\partial \HK(x, 0, -1, 0)$ contains $(0, 0, -2, 0)$ as the convex combination of the two previously mentioned vectors. Therefore, $J_0 \partial \HK(x, 0, -1, 0)$ contains $(2, 0, 0, 0)$ for all $x \in [-1, 1]$. This implies that
\[
\dot{\gamma}(t) \in 4J_0 \partial \HK(\gamma(t)), \quad t \in \left[0, \frac{1}{4}\right].
\]

Using similar reasoning, we show that $J_0 \partial \HK(1, 0, y, 0)$ contains $(0, 0, 2, 0)$ for all $y \in [-1, 1]$.  Hence, we have that
\[
\dot{\gamma}(t) \in 4J_0 \partial \HK(\gamma(t)), \quad t \in \left[\frac{1}{4}, \frac{1}{2}\right].
\]

Analogously, we handle the cases when $t \in [\frac{1}{2}, \frac{3}{4}]$ and $t \in [\frac{3}{4}, 1]$. Combining all of this, we get that
\[
\dot{\gamma}^\pm(t) \in 4J_0 \partial \HK(\gamma(t)), \quad t \in \mathbb{T}.
\]

Hence, $\gamma$ is a closed characteristic, and since the minimal action on the space of characteristics is 4 (by the discussion at the beginning of the proof), we conclude that $\gamma \in \sys(\partial K)$.

Now, we will define a sequence of systoles $\gamma_n: \mathbb{T} \to \partial K$ that uniformly converges to $\gamma$.

\[
\gamma_n(t) =
\begin{cases}
\big(-1 + 8t,   8(t - \frac{2k}{8n}), -1, 0\big), & t \in \big[\frac{2k}{8n}, \frac{2k+1}{8n}\big], \quad k \in \{0, \dots, n-1\}, \\
\big(-1 + 8t,  \frac{1}{n} - 8(t - \frac{2k-1}{8n}), -1, 0\big), & t \in \big[\frac{2k-1}{8n}, \frac{2k}{8n}\big], \quad k \in \{1, \dots, n\}, \\
\big(1,0, -1 + 8(t - \frac{1}{4}), 0\big), & t \in \big[\frac{1}{4}, \frac{1}{2}\big], \\
\big(1 - 8(t - \frac{1}{2}),0, 1, 0\big), & t \in \big[\frac{1}{2}, \frac{3}{4}\big], \\
\big(-1, 0, 1 - 8(t - \frac{3}{4}), 0\big), & t \in \big[\frac{3}{4}, 1\big].
\end{cases}
\]

The approximating sequence is illustrated in Figure \ref{fig:aproximatingsequance}.

\begin{figure}[h]
  \centering
  \includegraphics[scale=3]{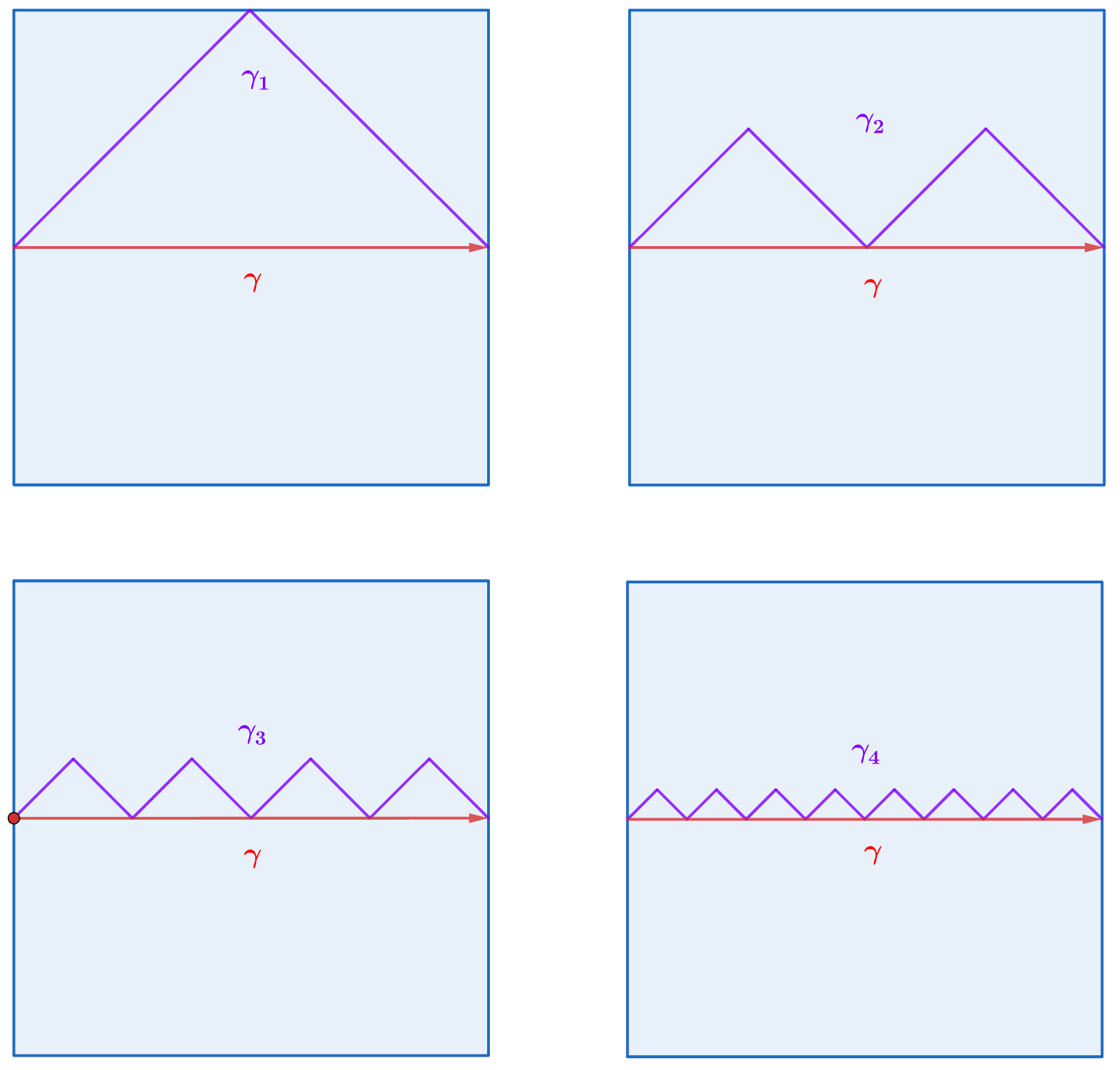}
  \caption{Approximating sequence $\gamma_n$ of $\gamma$.}
  \label{fig:aproximatingsequance}
\end{figure}

The image of $\gamma_n$ is indeed inside $\partial K$. We have that
\[
\partial \HK(x, y, -1, 0) = \text{conv}\{(0, 0, -2, -2), (0, 0, -2, 2)\},
\]
when $(x_1, x_2) \in \text{int}(B_\infty)$, which extends, due to the upper semicontinuity of $\partial \HK$, to boundary points of $B_\infty$ in the sense that the subdifferential contains $\text{conv}\{(0, 0, -2, -2), (0, 0, -2, 2)\}$. In particular, for $(x_1, x_2) \in B_\infty$, the set $J_0 \partial \HK(x_1, x_2, -1, 0)$ contains $(2, 2, 0, 0)$ and $(2, -2, 0, 0)$.  

Since it holds that
\[
\dot{\gamma}_n(t) = (8, 8, 0, 0), \quad t \in \left[\frac{2k}{8n}, \frac{2k+1}{8n}\right], \quad k \in \{0, \dots, n-1\},
\]
and
\[
\dot{\gamma}_n(t) = (8, -8, 0, 0), \quad t \in \left[\frac{2k-1}{8n}, \frac{2k}{8n}\right], \quad k \in \{1, \dots, n\},
\]
we conclude that
\[
\dot{\gamma}_n^\pm(t) \in 4J_0 \partial \HK(\gamma_n(t)), \quad t \in \left[0, \frac{1}{4}\right].
\]

Because $\gamma_n$ agrees with $\gamma$ on $\left[\frac{1}{4}, 1\right]$, we have
\[
\dot{\gamma}_n^\pm(t) \in 4J_0 \partial \HK(\gamma_n(t)), \quad t \in \mathbb{T}.
\]

Therefore, $\gamma_n \in \sys(\partial K)$. It's easy to conclude that
\[
\|\gamma - \gamma_n\|_\infty \leq \frac{1}{n}.
\]

Hence, $\gamma_n$ uniformly converges to $\gamma$. On the other hand, for $t \in [0, \frac{1}{4}]$, it holds that
\[
|\dot{\gamma}(t) - \dot{\gamma}_n(t)| \geq 8,
\]
almost everywhere (except at finitely many points where $\gamma_n$ and $\gamma$ are not differentiable). This implies that $\gamma_n$ does not converge to $\gamma$ in the $W^{1, 1}$-norm.
\end{proof}

From this proposition, we conclude that the spaces $\sys(\partial (B_\infty \times B_1))$ and $\sys_0(\partial (B_\infty \times B_1))$ are not compact in the strong $W^{1,p}$-norm for any $p \in [1, +\infty]$. Hence, from Proposition \ref{uniformtopology}, we conclude that the uniform norm is natural for the spaces $\sys(\partial K)$ and $\sys_0(\partial K)$ in the non-smooth setting.

\subsection{Relationship between spaces of generalized and centralized generalized systoles}

In this subsection, we discuss the relationship between the spaces $\sys(\partial K)$ and $\sys_0(\partial K)$. As we shall see, although these spaces are generally distinct, they coincide from a cohomological point of view. We employ Alexandrov–Spanier cohomology (see \cite{Spa66}), as this is the theory used in the definition of the Fadell–Rabinowitz index in \cite{FR78}.

\begin{proposition}\label{centralizedsystolesconvexfibration}
Let $K \subset \mathbb{R}^{2n}$ be a convex body whose interior contains the origin, and let $\pi_0: \sys(\partial K) \to \sys_0(\partial K)$ denote the orthogonal $L^2$-projection onto the space of centralized generalized systoles.

\begin{enumerate}
    \item The map $\pi_0$ is a continuous, $S^1$-equivariant convex compact fibration. More precisely, $\pi_0$ is a continuous, $S^1$-equivariant surjection such that for every $\gamma_0 \in \sys_0(\partial K)$, the preimage \(\pi_0^{-1}(\gamma_0)\) is a convex and compact set.  
    \item If $K \subset \mathbb{R}^{2n}$ is strictly convex, i.e., for any $x, y \in K$, it holds $[x, y] \setminus \{x, y\}\subset \text{int}(K)$, then $\pi_0$ is an $S^1$-equivariant homeomorphism.   
\end{enumerate}
\end{proposition}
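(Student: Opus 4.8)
The plan is to handle the two claims separately, but both rest on the same observation: if $\gamma_1, \gamma_2 \in \sys(\partial K)$ have the same image under $\pi_0$, i.e. $\dot\gamma_1 = \dot\gamma_2$ a.e., then they differ only by a constant vector, $\gamma_2 = \gamma_1 + c$ for some $c \in \mathbb{R}^{2n}$, and conversely. So $\pi_0^{-1}(\gamma_0)$ is, after choosing one representative $\gamma$, exactly the set $\{\gamma + c : \gamma + c \in \sys(\partial K)\}$, which we must show is convex and compact; and this set is a single point precisely when the fibres cannot be nontrivially translated, which is where strict convexity enters.

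For claim (1), continuity and $S^1$-equivariance of $\pi_0$ are immediate: $\pi_0$ is the restriction of a bounded linear projection on $L^2$, which is continuous for the uniform topology (the topology from Proposition \ref{uniformtopology}), and it commutes with reparametrization since subtracting the mean of $\gamma(\cdot-\theta)$ gives $(\pi_0\gamma)(\cdot-\theta)$. Surjectivity onto $\sys_0(\partial K)$ is by definition. For the fibre over $\gamma_0$, fix $\gamma \in \pi_0^{-1}(\gamma_0)$ and consider the set $C_{\gamma_0} := \{c \in \mathbb{R}^{2n} : \gamma + c \in \sys(\partial K)\}$, so that $\pi_0^{-1}(\gamma_0) = \gamma + C_{\gamma_0}$. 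I would first argue $C_{\gamma_0}$ is convex: if $\gamma + c_0$ and $\gamma + c_1$ are systoles, then for each $t$ the points $\gamma(t)+c_0$ and $\gamma(t)+c_1$ lie on $\partial K$, hence the segment between them lies in $K$; but the systole equation \eqref{systoleequationderivative}, $\HK'(x; v) + \langle J_0\dot\gamma(t), v\rangle \geq 0$ for all $v$, together with the homogeneity identity $\HK(x) = \frac12\langle\partial\HK(x), x\rangle$, forces $\HK(\gamma(t)+c_s) = 1$ for the intermediate $s$ as well — here I use that $t\mapsto \HK(\gamma(t)+c_s)$ has a.e. derivative $\langle \xi(t), \dot\gamma(t)\rangle$ with $\xi(t)\in\partial\HK(\gamma(t)+c_s)$, which vanishes a.e. because $\dot\gamma(t)\in J_0\partial\HK(\gamma(t)) = J_0\partial\HK(\gamma(t)+c_s)$ when the subdifferentials agree along the segment; one shows the subdifferential is constant along $[\gamma(t)+c_0,\gamma(t)+c_1]$ using that $\HK$ restricted to this segment is an affine function equal to $1$ at both endpoints and $\geq$... — actually the cleanest route is: the function $c\mapsto \int_{\mathbb T}\HK(\gamma(t)+c)\,dt$ is convex, equals $1$ at $c_0$ and $c_1$, and is $\geq 1$ everywhere (since $\HK(\gamma(t)+c)\geq$ the support-function bound, or simply since $\gamma+c$ maps into $\mathbb{R}^{2n}$ and one checks the action/homogeneity constraint), hence equals $1$ on the whole segment, which combined with $\HK\geq$ its boundary value $1$ pointwise... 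I will need to make this pointwise-on-$\partial K$ conclusion rigorous, and this is the main obstacle (see below). Compactness of $C_{\gamma_0}$: it is bounded because $\gamma+c$ maps into the bounded set $\partial K$, and it is closed because $\sys(\partial K)$ is closed in the uniform topology (Proposition \ref{uniformtopology}) and $c\mapsto\gamma+c$ is a homeomorphism onto its image; hence $C_{\gamma_0}$ is compact and so is $\pi_0^{-1}(\gamma_0) = \gamma + C_{\gamma_0}$.

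For claim (2), I must show that when $K$ is strictly convex, $C_{\gamma_0} = \{0\}$ for every fibre, so that $\pi_0$ is a continuous bijection; it is then a homeomorphism because its domain is compact and its target Hausdorff (Proposition \ref{uniformtopology}), and it is $S^1$-equivariant by claim (1). Suppose $c\neq 0$ with $\gamma + c \in \sys(\partial K)$. Then for every $t$, both $\gamma(t)$ and $\gamma(t)+c$ lie on $\partial K$. Consider the midpoint $\gamma(t) + c/2$; by convexity it lies in $K$, and if it lay in $\text{int}(K)$ for some $t$ we could derive a contradiction with $\HK(\gamma(t)+c/2) = 1$ (which follows from the convexity-of-the-integral argument above applied pointwise, using $C_{\gamma_0}$ convex and the fact that $\HK\circ(\gamma+\cdot)$ is continuous and equals $1$ on the segment $[0,c]\ni c/2$ only if... — here I again invoke: on a segment in $\partial K$, $\HK$ can only be constant $=1$ if the whole segment is in $\partial K$, which fails at the midpoint by strict convexity since the endpoints are distinct points of $K$). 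More carefully: strict convexity says $[\gamma(t), \gamma(t)+c]\setminus\{\gamma(t),\gamma(t)+c\}\subset\text{int}(K)$, so $\HK(\gamma(t)+c/2) < 1$; but summing/integrating, $\int_{\mathbb T}\HK(\gamma(t)+c/2)\,dt < 1$, contradicting that $\gamma + c/2 \in \pi_0^{-1}(\gamma_0)$ (which holds since $C_{\gamma_0}$ is convex and contains $0$ and $c$) and hence has constant $\HK = 1$. Thus $c = 0$.

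The step I expect to be the genuine obstacle is establishing that every element of $\pi_0^{-1}(\gamma_0)$ — equivalently every curve $\gamma + c$ with $c$ in the convex hull of admissible translations — actually maps into $\partial K$ (i.e. $\HK(\gamma(t)+c) \equiv 1$), rather than merely satisfying $\int_{\mathbb T}\HK(\gamma+c)\,dt = 1$ or mapping into $K$. The resolution is that $\gamma+c$ being a reparametrized closed characteristic of $\HK^{-1}(1)$-type equation with action $1$ forces $\HK\circ(\gamma+c)$ to be a.e.-constant (its derivative is $\langle\xi(t),\dot\gamma(t)\rangle$ with $\xi(t)\in\partial\HK$, and $\dot\gamma(t)\in J_0\partial\HK(\gamma(t))\perp\partial\HK(\gamma(t))$ by the homogeneity pairing, giving derivative zero), and the constant is $1$ because the action equals $\int\HK$ for such solutions; combined with pointwise convexity-in-$c$ this pins down the whole fibre. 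I would isolate this as a preliminary lemma: \emph{if $\eta:\mathbb{T}\to\mathbb{R}^{2n}$ is Lipschitz with $\dot\eta(t)\in J_0\partial\HK(\eta(t))$ a.e., then $\HK\circ\eta$ is constant}, and then run the convexity argument for claims (1) and (2) on top of it.
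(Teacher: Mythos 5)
Your overall skeleton is sound and partly matches the paper: fibres of $\pio$ consist of translates $\gamma+c$ of a fixed systole, compactness of a fibre follows from boundedness and closedness inside the compact space $\sys(\partial K)$, and your derivation of claim (2) from claim (1) (the midpoint $\gamma+c/2$ would have to lie on $\partial K$, contradicting strict convexity) is a valid, even slightly slicker, alternative to the paper's direct injectivity argument. But the central step --- convexity of the fibre, i.e.\ that the intermediate translates $\gamma+c_s$ are themselves generalized systoles --- is not established, and this is exactly the step you flag as the obstacle. Two things are missing. First, for the pointwise statement $\HK(\gamma(t)+c_s)=1$ your ``cleanest route'' rests on the inequality $\int_{\mathbb T}\HK(\gamma(t)+c)\,dt\geq 1$ for all $c$, which you assert with a vague parenthetical; it is in fact true, but only because $-J_0\dot\gamma(t)\in\partial\HK(\gamma(t))$ is a subgradient selection with zero mean, so that $0$ lies in the subdifferential at $c=0$ of the convex function $c\mapsto\int_{\mathbb T}\HK(\gamma(t)+c)\,dt$, making $c=0$ a global minimizer with value $1$ --- none of this is in your text. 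Second, and more seriously, even granting that $\gamma+c_s$ maps into $\partial K$, you must show it satisfies the inclusion $\dot\gamma(t)\in J_0\partial\HK(\gamma(t)+c_s)$ a.e.\ at its \emph{own} points; having the right image and the right derivative does not by itself make a curve a characteristic. Your only gesture at this is the claim that the subdifferential is constant along the segment $[\gamma(t)+c_0,\gamma(t)+c_1]$, which is false in general (subdifferentials jump at endpoints of flat faces), and your proposed preliminary lemma (a Lipschitz solution of the inclusion has constant $\HK$) is circular here: it presupposes that $\gamma+c_s$ already solves the inclusion, which is the very point at issue.

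The ingredient that closes both gaps at once is the elementary convex-analysis fact used by the paper (Lemma \ref{normalconessubdiferentiallemma}): if $\eta\in\partial f(x)\cap\partial f(y)$ for a convex $f$, then $\eta\in\partial f(\alpha x+(1-\alpha)y)$ for all $\alpha\in[0,1]$, and correspondingly, if the normal cones of $\partial K$ at two boundary points intersect, the whole segment between them lies in $\partial K$. Applied at a.e.\ $t$ with the common vector $-J_0\dot\gamma(t)\in\partial\HK(\gamma(t)+c_0)\cap\partial\HK(\gamma(t)+c_1)$, this gives simultaneously that $[\gamma(t)+c_0,\gamma(t)+c_1]\subset\partial K$ (so no integral argument is needed) and that the same subgradient persists at $\gamma(t)+c_s$, i.e.\ $\dot\gamma(t)\in J_0\partial\HK(\gamma(t)+c_s)$, so $\gamma+c_s\in\sys(\partial K)$. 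With that lemma added, your write-up of claim (1) goes through, and your deduction of claim (2) from convexity of the fibres is then complete (the paper instead proves injectivity directly from disjointness of normal cones at distinct points of a strictly convex boundary, but your route is equally valid).
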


If one drops the strict convexity assumption from statement (2), $\pi_0$ does not have to be injective, even in the smooth category. 

\begin{example}[Examples of convex bodies for which $\pio$ is not injective on the space of systoles]\label{noninjective}

Consider the polydisc 
\[
P(1,1) = \{(z_1, z_2) \in \mathbb{C}^2 \mid \pi |z_1|^2 \leq 1, \ \pi |z_2|^2 \leq 1\}.
\]
The space $\sys(\partial P(1,1))$ consists of loops  
\[
\gamma: \mathbb{T} \to \partial P(1,1), \quad \gamma(t) = \left(e^{2\pi i t} z_1, z_2\right), \quad |z_1|^2 = \frac{1}{\pi}, \ |z_2|^2 \leq \frac{1}{\pi},
\]
and loops  
\[
\gamma: \mathbb{T} \to \partial P(1,1), \quad \gamma(t) = \left(z_1, e^{2\pi i t} z_2\right), \quad |z_1|^2 \leq \frac{1}{\pi}, \ |z_2|^2 = \frac{1}{\pi}.
\]

It is clear that $\pio$ is not injective.

For a smooth example, we can smooth the non-smooth parts of the boundary of $P(1,1)$. More precisely, we consider a perturbation near $\partial D\left(\frac{1}{\pi}\right) \times \partial D\left(\frac{1}{\pi}\right)$.

Note that it holds $c_1^{GH}(\overline{B}(1)) = c_1^{GH}(P(1,1))$, where 
\[
\overline{B}(1) = \{(z_1, z_2) \in \mathbb{C}^2 \mid \pi(|z_1|^2 + |z_2|^2) \leq 1\}.
\]

Therefore, we can choose a small enough perturbation $\widetilde{P}$ of $P(1,1)$ such that $\widetilde{P}$ is a convex body with a smooth boundary, and it holds 
\[
\overline{B}(1) \subset \widetilde{P} \subset P(1,1).
\]

Due to the monotonicity of capacities, it follows that $c_1^{GH}(\widetilde{P}) = c_1^{GH}(\overline{B}(1)) = c_1^{GH}(P(1,1))=1$. 

If we choose a small enough perturbation, we find that for some $a \in (0, \frac{1}{\pi})$, the loops 
\[
\gamma: \mathbb{T} \to \partial \widetilde{P}, \quad \gamma(t) = \left(e^{2\pi i t} z_1, z_2\right), \quad |z_1|^2 = \frac{1}{\pi}, \ |z_2|^2 \leq a,
\]
and   
\[
\gamma: \mathbb{T} \to \partial \widetilde{P}, \quad \gamma(t) = \left(z_1, e^{2\pi i t} z_2\right), \quad |z_1|^2 \leq a, \ |z_2|^2 = \frac{1}{\pi},
\]
are contained in the space $\chara(\partial \widetilde{P})$. Moreover, because their action is $1$, $\widetilde{P}$ is convex, and $c_1^{GH}(\widetilde{P}) = 1$, they belong to $\sys(\partial \widetilde{P})$. Consequently, $\pio$ is not injective on $\sys(\partial \widetilde{P})$, and $\widetilde{P}$ has a smooth boundary.

\end{example}

To prove Proposition \ref{centralizedsystolesconvexfibration}, we will use the following lemma.

\begin{lemma}
The following claims hold:
\begin{enumerate}\label{normalconessubdiferentiallemma}
    \item Let $K \subset \mathbb{R}^m$ be a convex body. If $x, y \in \partial K$ are such that $N_{\partial K}(x) \cap N_{\partial K}(y) \neq \emptyset$, then $[x, y] \subset \partial K$.
    \item Let $f: \mathbb{R}^m \to \mathbb{R}$ be a convex function. Then, for every $x, y \in \mathbb{R}^m$ and every $\alpha \in [0,1]$, it holds that $\partial f(x) \cap \partial f(y) \subseteq \partial f(\alpha x + (1-\alpha)y)$.
\end{enumerate}
\end{lemma}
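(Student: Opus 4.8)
The plan is to prove the two claims of Lemma~\ref{normalconessubdiferentiallemma} essentially independently, using the elementary structure of the normal cone and the subdifferential of a convex function, and then note that the second claim is really the analytic counterpart of the first (and in fact implies it when applied to the gauge function $\HK$, since $N_{\partial K}(x) = \mathbb{R}_+\partial\HK(x)$).

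For claim (1), let $\eta \in N_{\partial K}(x) \cap N_{\partial K}(y)$. By definition of the outward normal cone, $\langle \eta, x - z\rangle \geq 0$ and $\langle \eta, y - z\rangle \geq 0$ for all $z \in K$. In particular, taking $z = y$ in the first inequality and $z = x$ in the second gives $\langle \eta, x - y\rangle \geq 0$ and $\langle \eta, y - x\rangle \geq 0$, hence $\langle \eta, x - y\rangle = 0$; so $x$ and $y$ lie on the same supporting hyperplane $\{u \mid \langle \eta, u\rangle = \langle \eta, x\rangle\}$ of $K$. Now for $\alpha \in [0,1]$ and any $z \in K$, $\langle \eta, (\alpha x + (1-\alpha)y) - z\rangle = \alpha\langle\eta, x-z\rangle + (1-\alpha)\langle\eta, y-z\rangle \geq 0$, so $\eta$ supports $K$ at the point $w := \alpha x + (1-\alpha)y$; moreover $w \in K$ by convexity and $\langle\eta, w\rangle = \langle\eta, x\rangle$ is the maximum of $\langle\eta,\cdot\rangle$ over $K$, so $w$ cannot lie in $\operatorname{int}(K)$ (a maximizer of a nonzero linear functional on a convex body with nonempty interior is a boundary point). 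Hence $w \in \partial K$, proving $[x,y] \subset \partial K$.

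For claim (2), let $\xi \in \partial f(x) \cap \partial f(y)$ and set $w = \alpha x + (1-\alpha)y$. The subgradient inequality at $x$ gives $f(x) \geq f(w) + \langle \xi, x - w\rangle$ and at $y$ gives $f(y) \geq f(w) + \langle \xi, y - w\rangle$. Taking the convex combination $\alpha(\cdots) + (1-\alpha)(\cdots)$ and using $\alpha(x-w) + (1-\alpha)(y-w) = 0$, we obtain $\alpha f(x) + (1-\alpha) f(y) \geq f(w)$. On the other hand, convexity of $f$ gives the reverse inequality $f(w) \leq \alpha f(x) + (1-\alpha) f(y)$, which combined with the above forces $f(w) = \alpha f(x) + (1-\alpha) f(y)$. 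Then for arbitrary $v \in \mathbb{R}^m$, writing $f(v) \geq f(x) + \langle\xi, v - x\rangle$ and $f(v) \geq f(y) + \langle\xi, v - y\rangle$, taking the same convex combination and using the just-established equality yields $f(v) \geq f(w) + \langle\xi, v - w\rangle$, i.e.\ $\xi \in \partial f(w)$.

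There is essentially no serious obstacle here — both statements are standard facts in convex analysis and the arguments are short. The only point requiring a little care is the very last step: one must not merely check the equality $f(w) = \alpha f(x)+(1-\alpha)f(y)$ but actually feed it back into the subgradient inequalities to verify $\xi \in \partial f(w)$; a similar remark applies in claim (1), where after producing a supporting hyperplane at $w$ one must recall that a body with nonempty interior does not attain the maximum of a nonzero linear functional at an interior point in order to conclude $w \in \partial K$. If one prefers, claim (1) can instead be deduced from claim (2) by applying it to $f = \HK$ (after translating so the origin is interior), using $N_{\partial K}(x) = \mathbb{R}_+\partial\HK(x)$ and $\HK^{-1}(1) = \partial K$ together with $2$-homogeneity; but the direct argument above is just as quick.
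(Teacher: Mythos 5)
Your claim (1) is correct and is essentially the paper's argument: you show that any $\eta \in N_{\partial K}(x)\cap N_{\partial K}(y)$ remains a normal vector at every point of the segment, and then conclude that each such point must lie on $\partial K$ because a nonzero linear functional on a convex body is not maximized at an interior point (the paper phrases this as the normal cone at an interior point being empty).

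In claim (2) your final step coincides with the paper's proof, but the intermediate paragraph has a direction error. For $\xi\in\partial f(x)$, the subgradient inequality evaluated at $w=\alpha x+(1-\alpha)y$ gives $f(w)\geq f(x)+\langle\xi,\,w-x\rangle$, i.e.\ $f(x)\leq f(w)+\langle\xi,\,x-w\rangle$ --- the opposite of what you wrote; as stated, $f(x)\geq f(w)+\langle\xi,\,x-w\rangle$ is the subgradient inequality \emph{at $w$}, which is the thing to be proved. With the correct directions, the convex combination yields $\alpha f(x)+(1-\alpha)f(y)\leq f(w)$, and convexity, $f(w)\leq\alpha f(x)+(1-\alpha)f(y)$, is then genuinely the reverse bound (as written in your text, the two displayed inequalities are literally the same, so they cannot ``force'' an equality). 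Once corrected, your equality $f(w)=\alpha f(x)+(1-\alpha)f(y)$ does hold, but it is not needed: in your last step, $f(v)\geq\alpha f(x)+(1-\alpha)f(y)+\langle\xi,\,v-w\rangle\geq f(w)+\langle\xi,\,v-w\rangle$ already follows from convexity alone, which is exactly the paper's one-line argument. So the proof is fine after fixing (or simply deleting) the misquoted intermediate inequalities.
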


\begin{proof}
\textbf{Claim} (1): Let's recall the definition of the outward normal cone of $\partial K$ at $x \in \partial K$.  
\[
N_{\partial K}(x) = \{\eta \in \mathbb{R}^{m} \backslash \{0\} \mid \langle \eta, x - z \rangle \geq 0, \quad \forall z \in K\}.
\]

Notice that it makes sense to define $N_{\partial K}(x)$ for $x \in \text{int}(\partial K)$, but such a set will always be empty since for every $\eta \in \mathbb{R}^{m}\backslash\{0\}$, we can find $z \in K$ such that $x - z = \beta \eta$ where $\beta < 0$, which implies that $\langle \eta, x - z \rangle < 0$. Therefore, for $x \in K$, it holds that $x \in \partial K$ if and only if $N_{\partial K}(x) \neq \emptyset$.

Now, if we assume that $\eta \in N_{\partial K}(x) \cap N_{\partial K}(y)$, we have that for every $\alpha \in [0,1]$ and every $z \in K$, it holds:
\[
\langle \eta, \alpha x + (1 - \alpha)y - z \rangle = \alpha\langle \eta, x - z \rangle + (1 - \alpha)\langle \eta, y - z \rangle \geq 0.
\]

Thus, we have shown that for every $\alpha \in [0,1]$, $\eta \in N_{\partial K}(\alpha x + (1-\alpha)y)$. Since $K$ is convex, $\alpha x + (1-\alpha)y \in K$. Therefore, $N_{\partial K}(\alpha x + (1-\alpha)y) \neq \emptyset$, implies that $[x, y] \subset \partial K$.

\textbf{Claim} (2): We recall the definition of a subgradient. Let $x \in \mathbb{R}^m$ be arbitrary.  
\[
\partial f(x) = \{\eta \in \mathbb{R}^{2n} \mid f(z) - f(x) \geq \langle \eta, z - x \rangle, \quad \forall z \in \mathbb{R}^{2n}\}.
\]

Let $\eta \in \partial f(x) \cap \partial f(y)$. Then, for every $\alpha \in [0,1]$ and every $z \in \mathbb{R}^{2n}$, it holds that
\begin{align*}
f(z) - f(\alpha x + (1-\alpha)y) &\geq f(z) - \alpha f(x) - (1-\alpha)f(y) \\
&= \alpha(f(z) - f(x)) + (1-\alpha)(f(z) - f(y)) \\
&\geq \alpha\langle \eta, z - x \rangle + (1-\alpha)\langle \eta, z - y \rangle \\
&= \langle \eta, z - (\alpha x + (1-\alpha)y) \rangle.
\end{align*}

This implies that $\eta \in \partial f(\alpha x + (1-\alpha)y)$ for every $\alpha \in [0,1]$. Hence, the claim holds.

\end{proof}

\begin{proof}[Proof of Proposition \ref{centralizedsystolesconvexfibration}]

\textbf{Claim }(1): Since
\[
\pio(\gamma) = \gamma - \int\limits_{\mathbb{T}} \gamma(t) \, dt
\]

It's clear that $\pio:\sys(\partial K) \to \sys_0(\partial K)$ is continuous with respect to the uniform topology, and $S^1$-equivariant. Surjectivity of this map comes from the fact that $\sys_0(\partial K)$ is precisely defined as the image of the space $\sys(\partial K)$ under the map $\pio$. Since $\pio$ is continuous, $\sys(\partial K)$ is compact, and $\sys_0(\partial K)$ is a $T_1$-space, it follows that for every $\gamma_0 \in \sys_0(\partial K)$, the set $\pio^{-1}(\gamma_0)$ is compact. Therefore, we only need to show that $\pio^{-1}(\{\gamma_0\})$ is convex.

Another way to understand the map $\pio$ is as the map
\[
\gamma \mapsto \dot{\gamma}
\]
where on the space of derivatives we consider the weak$^*$-$L^\infty$ topology. In particular, it holds that $\pio(\gamma_1) = \pio(\gamma_2)$ if and only if $\dot{\gamma_1} = \dot{\gamma_2}$ almost everywhere. We can assume, up to homothety, that the action on the space of generalized systoles equals $1$. Therefore, we have that an absolutely continuous loop $\gamma: \mathbb{T} \to \partial K$ is a systole if and only if
\[
\dot{\gamma}(t) \in J_0\partial \HK(\gamma(t)), \quad \text{a.e.}
\]

Assume now that for $\gamma_1, \gamma_2 \in \sys(\partial K)$, it holds $\pio(\gamma_1) = \pio(\gamma_2) = \gamma_0$, i.e., for almost every $t \in \mathbb{T}$,
\begin{equation}\label{speedintersection}
v_t = \dot{\gamma}_1(t) = \dot{\gamma}_2(t) \in J_0\partial\HK(\gamma_1(t)) \cap J_0\partial\HK(\gamma_2(t)).    
\end{equation}

Since $J_0\mathbb{R}_+\partial \HK(x) = J_0N_{\partial K}(x)$ for every $x \in \partial K$, we conclude that for almost every $t \in \mathbb{T}$,
\[
N_{\partial K}(\gamma_1(t)) \cap N_{\partial K}(\gamma_2(t)) \neq \emptyset
\]

which, by Claim (1) of Lemma \ref{normalconessubdiferentiallemma} and compactness of $\partial K$, implies that for every $t \in \mathbb{T}$, it holds $[\gamma_1(t), \gamma_2(t)] \subset \partial K$. Therefore, for every $\alpha \in [0,1]$, the convex combination $\gamma_\alpha = \alpha\gamma_1 + (1-\alpha)\gamma_2$ is an absolutely continuous function whose image is contained in $\partial K$. From \eqref{speedintersection} and the second statement of Lemma \ref{normalconessubdiferentiallemma}, we have that for almost every $t \in \mathbb{T}$,
\[
\dot{\gamma}_\alpha(t) = \alpha\dot{\gamma}_1(t) + (1-\alpha)\dot{\gamma}_2(t) = v_t \in J_0\partial\HK(\gamma_1(t)) \cap J_0\partial\HK(\gamma_2(t)) \subseteq J_0\partial\HK(\gamma_\alpha(t)).
\]

Hence, for every $\alpha \in [0,1]$, $\gamma_\alpha \in \sys(\partial K)$. Additionally, since $\pio$ is a linear map and $\pio(\gamma_1) = \pio(\gamma_2) = \gamma_0$, it follows that for every $\alpha \in [0,1]$,
\[
\pio(\gamma_\alpha) = \alpha\pio(\gamma_1) + (1-\alpha)\pio(\gamma_2) = \gamma_0.
\]

Thus, we have shown that if $\gamma_1, \gamma_2 \in \pio^{-1}(\{\gamma_0\})$, then $[\gamma_1, \gamma_2] \subseteq \pio^{-1}(\{\gamma_0\})$, which concludes the proof of the first claim.

\textbf{Claim} (2): Again, we can assume that the action on the space of systoles is $1$. If $K \subset \mathbb{R}^{2n}$ is strictly convex, i.e., for every $x, y \in K$, it holds that $[x, y] \backslash \{x, y\} \subset \text{int}(K)$, by claim (1) of Lemma \ref{normalconessubdiferentiallemma}, we conclude that for every $x, y \in \partial K$ such that $x \neq y$, it holds
\[
N_{\partial K}(x) \cap N_{\partial K}(y) = \emptyset.
\]

This, in particular, implies that for every $x, y \in \partial K$, 
\begin{equation}\label{strictlyconvexintersection}
J_0\partial \HK(x)\cap J_0\partial \HK(y)=\emptyset, \quad x\neq y.    
\end{equation}

Assume now that $\gamma_1, \gamma_2 \in \sys(\partial K)$ and $\gamma_1 \neq \gamma_2$. Being that both loops are continuous, there exists an interval $[a, b] \subset \mathbb{T}$ such that $a < b$ and $\gamma_1(t) \neq \gamma_2(t)$ for every $t \in [a, b]$. Since
\[
\dot{\gamma}_{1}(t) \in J_0\partial\HK(\gamma_{1}(t)), \quad \dot{\gamma}_2(t) \in J_0\partial\HK(\gamma_{1}(t)), \quad t \in [a, b] \setminus S
\]
where $S$ is a measure zero set on which $\gamma_1$ and $\gamma_2$ are not differentiable, from \eqref{strictlyconvexintersection}, we conclude that $\dot{\gamma}_{1}(t) \neq \dot{\gamma}_2(t)$ for every $t \in [a, b] \setminus S$. Hence, $\dot{\gamma}_{1}$ and $\dot{\gamma}_2$ differ on a positive measure set, and $\pio(\gamma_1) \neq \pio(\gamma_2)$. Thus, we have shown that $\pio$ is injective, and being that it is surjective by definition, we conclude that the map
\[
\pio: \sys(\partial K) \to \sys_0(\partial K)
\]
is a continuous $S^1$-equivariant bijection. Since $\sys(\partial K)$ is compact and $\sys_0(\partial K)$ is Hausdorff, the conclusion follows.

\end{proof}

\begin{proposition}\label{homologycalequivalanceofsystolesandcenteralizedsystoles}
    Let $K \subset \mathbb{R}^{2n}$ be a convex body whose interior contains the origin, and let $\pi_0: \sys(\partial K) \to \sys_0(\partial K)$ denote the orthogonal $L^2$-projection. Then the following statements hold:

\begin{enumerate}
    \item The map $\pi_0$ induces an isomorphism in cohomology.
    \item The map $\pi_0$ induces an isomorphism in $S^1$-equivariant cohomology.
    \item The spaces $\sys(\partial K)$ and $\sys_0(\partial K)$ have the same Fadell–Rabinowitz index, that is,
    \[
    \indfr(\sys(\partial K)) = \indfr(\sys_0(\partial K)).
    \]
\end{enumerate}

\end{proposition}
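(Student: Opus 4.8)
The plan is to deduce statements (2) and (3) from statement (1) using the fact, established in Proposition~\ref{centralizedsystolesconvexfibration}, that $\pi_0$ is a continuous, $S^1$-equivariant convex compact fibration. The heart of the argument is statement (1): I want to show that the fibers of $\pi_0$, being compact convex subsets of a topological vector space, are contractible (indeed $\mathbb{F}$-acyclic), and then invoke a Vietoris--Begle type theorem for Alexandrov--Spanier cohomology to conclude that $\pi_0^*$ is an isomorphism on $\H^*(-;\mathbb{Z}_2)$.

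First I would verify the hypotheses of the Vietoris--Begle mapping theorem (in the form valid for Alexandrov--Spanier / Čech cohomology, e.g.\ Spanier \cite{Spa66}): $\pi_0$ is a closed continuous surjection of paracompact (in fact compact Hausdorff) spaces whose point-inverses $\pi_0^{-1}(\gamma_0)$ are, by Proposition~\ref{centralizedsystolesconvexfibration}(1), compact and convex, hence cohomologically trivial, i.e.\ $\widetilde{\H}^*(\pi_0^{-1}(\gamma_0);\mathbb{Z}_2)=0$ in all degrees. (A nonempty compact convex subset of $\Honehalf$ or of the space of $L^\infty$ derivatives is contractible via the straight-line homotopy to any of its points, and contractible spaces are acyclic in every cohomology theory satisfying the homotopy axiom, including Alexandrov--Spanier cohomology.) The Vietoris--Begle theorem then gives that $\pi_0^* : \H^*(\sys_0(\partial K);\mathbb{Z}_2) \to \H^*(\sys(\partial K);\mathbb{Z}_2)$ is an isomorphism, which is claim~(1).

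For claim~(2), I would pass to the Borel constructions. Because $\pi_0$ is $S^1$-equivariant, it induces a map $\pi_0 \times_{S^1}\mathrm{id}: \sys(\partial K)\times_{S^1}ES^1 \to \sys_0(\partial K)\times_{S^1}ES^1$ over $BS^1$. One option is to argue fiberwise: this map is again a closed surjection whose point-inverses are (homeomorphic to) the fibers of $\pi_0$, hence acyclic, so Vietoris--Begle applies directly and gives an isomorphism on $\H^*_{S^1}$. Alternatively, filter the Borel construction by the skeleta of $BS^1$ and compare the Leray--Serre (or rather the Leray) spectral sequences of the two fibrations over $BS^1$: the map of fibrations induces a map of spectral sequences which is an isomorphism on $E_2 = \H^*(BS^1;\mathbb{Z}_2)\otimes \H^*(\text{fiber};\mathbb{Z}_2)$ by claim~(1), hence an isomorphism on the abutment. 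I would present the fiberwise Vietoris--Begle argument as it is cleaner and avoids convergence bookkeeping.

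Finally, claim~(3) is immediate: the isomorphism $\pi_0^*:\H^*_{S^1}(\sys_0(\partial K);\mathbb{Z}_2)\xrightarrow{\sim}\H^*_{S^1}(\sys(\partial K);\mathbb{Z}_2)$ is a homomorphism of $\H^*(BS^1;\mathbb{Z}_2)$-modules, since both equivariant cohomologies are algebras over $\H^*(BS^1;\mathbb{Z}_2)$ via the respective projections $\pi_2$ to $BS^1$ and $\pi_0$ commutes with these projections. Hence $\pi_0^*$ sends the fundamental class $e$ of $\sys_0(\partial K)$ to the fundamental class $e$ of $\sys(\partial K)$, and therefore $e^i\neq 0$ in one equivariant cohomology ring if and only if $e^i\neq 0$ in the other. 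Taking the supremum over such $i$ yields $\indfr(\sys(\partial K)) = \indfr(\sys_0(\partial K))$, where the case of empty spaces is trivial and cannot occur here since $\sys(\partial K)\neq\emptyset$.

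I expect the main obstacle to be purely technical: pinning down the precise form of the Vietoris--Begle theorem that applies to Alexandrov--Spanier cohomology with field coefficients for closed surjections with acyclic fibers between compact Hausdorff spaces, and checking that the fibers are genuinely acyclic in that theory (contractibility via a linear homotopy suffices, but one must make sure the homotopy lands inside $\sys(\partial K)$ fiberwise, which is exactly the convexity statement of Proposition~\ref{centralizedsystolesconvexfibration}(1)). Everything after that — the module structure and the passage to the Fadell--Rabinowitz index — is formal.
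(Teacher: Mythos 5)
Your proposal follows essentially the same route as the paper: statement (1) via a Vietoris--Begle type theorem (the paper uses Mederski's version, \cite{Med13}) applied to the closed surjection $\pi_0$ with convex, compact, hence acyclic fibers from Proposition~\ref{centralizedsystolesconvexfibration}, statement (2) via the same fiberwise argument applied to the induced map of Borel constructions, and statement (3) via the formal observation that $\pi_0^*$ intertwines the fundamental classes. The only point the paper treats with more care is the closedness of the induced map on Borel constructions, which you assert without proof; since the Borel construction is not compact this requires a short argument (the paper factors it through a perfect map and a quotient map), but it is routine and your plan goes through.
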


\begin{proof}

\textbf{Claim }(1): Since $\sys(\partial K)$ is compact and $\sys_0(\partial K)$ is Hausdorff, we conclude that $\pio: \sys(\partial K) \to \sys_0(\partial K)$ is a closed map. Moreover, $\pio$ is surjective, and by statement (1) of Proposition \ref{centralizedsystolesconvexfibration}, for every $\gamma_0$, the fiber $\pio^{-1}(\gamma_0)$ is convex and therefore acyclic. The conclusion then follows from \cite[Theorem 4.2]{Med13}.

\textbf{Claim }(2): We need to show that the induced map
\[
\pi_0^*: H^*_{S^{1}}(\sys_0(\partial K)) \to H^*_{S^{1}}(\sys(\partial K))
\]
in $S^1$-equivariant cohomology is an isomorphism. We will use an argument analogous to the one in the previous claim.

Note that $\pi_0^* = (\widetilde{\pi}_0)^*$, where
\[
\widetilde{\pi}_0: \sys(\partial K) \times_{S^1} ES^1 \to \sys_0(\partial K) \times_{S^1} ES^1, \quad 
\widetilde{\pi}_0([\gamma, z]) = [\pi_0(\gamma), z],
\]
and
\[
(\widetilde{\pi}_0)^*: H^*(\sys_0(\partial K) \times_{S^1} ES^1) \to H^*(\sys(\partial K) \times_{S^1} ES^1).
\]
The map $\widetilde{\pi}_0$ is well defined since $\pi_0$ is $S^1$-equivariant. Thus, to show that $\pi_0^* = (\widetilde{\pi}_0)^*$ induces an isomorphism in cohomology, it suffices to show that $\widetilde{\pi}_0$ has acyclic fibers and is closed. As in the previous claim, \cite[Theorem 4.2]{Med13} then implies the result.

Let $[\gamma_0, z] \in \sys_0(\partial K) \times_{S^1} ES^1$ be arbitrary, and fix $(\gamma_0, z)$ as a representative of this class. Clearly,
\[
\widetilde{\pi}_0^{-1}([\gamma_0, z]) = \{[\gamma, z] \mid \gamma \in \pi_0^{-1}(\gamma_0)\}.
\]
Since $\pi_0^{-1}(\gamma_0)$ is convex by statement (1) of Proposition \ref{centralizedsystolesconvexfibration}, it follows that $\widetilde{\pi}_0^{-1}([\gamma_0, z])$ is contractible and therefore acyclic. It remains to show that $\widetilde{\pi}_0$ is a closed map.

Consider the map
\[
\overline{\pi}_0: \sys(\partial K) \times ES^1 \to \sys_0(\partial K) \times_{S^1} ES^1, \quad 
\overline{\pi}_0(\gamma, z) = [\pi_0(\gamma), z].
\]
This map induces $\widetilde{\pi}_0$ on the orbit space of $\sys(\partial K) \times ES^1$. We first show that $\overline{\pi}_0$ is closed. This follows because it is a composition of two closed maps. Indeed,
\[
\overline{\pi}_0 = p_0 \circ (\pi_0 \times id_{ES^1}),
\]
where
\[
p_0: \sys_0(\partial K) \times ES^1 \to \sys_0(\partial K) \times_{S^1} ES^1
\]
is the quotient map associated with the group action, and
\[
\pi_0 \times id_{ES^1}: \sys(\partial K) \times ES^1 \to \sys_0(\partial K) \times ES^1
\]
is the corresponding product map. The map $p_0$ is closed since $S^1$ acts continuously on $\sys_0(\partial K) \times ES^1$, and $\pi_0 \times id_{ES^1}$ is closed as a perfect map, because the product of perfect maps is perfect (see \cite[Theorem 3.7.9]{Eng89}). Therefore, $\overline{\pi}_0$ is closed.

Now denote by
\[
p: \sys(\partial K) \times ES^1 \to \sys(\partial K) \times_{S^1} ES^1
\]
the quotient map associated with the diagonal $S^1$-action on $\sys(\partial K) \times ES^1$.

Let $A \subset \sys(\partial K) \times_{S^1} ES^1$ be an arbitrary closed set. Since $p$ is a quotient map, we have
\[
A \subseteq \sys(\partial K) \times_{S^1} ES^1 \text{ is closed } \iff p^{-1}(A) \subseteq \sys(\partial K) \times ES^1 \text{ is closed.}
\]
It is clear that
\[
\widetilde{\pi}_0(A) = \overline{\pi}_0(p^{-1}(A)),
\]
and since $p^{-1}(A)$ is closed and $\overline{\pi}_0$ is a closed map, it follows that $\widetilde{\pi}_0(A)$ is closed in $\sys_0(\partial K) \times_{S^1} ES^1$. Therefore, $\widetilde{\pi}_0$ is closed, which completes the proof of the claim.

\textbf{Claim} (3): This claim follows directly from the previous one. Indeed, consider the maps
\[
\pi_2: \sys(\partial K) \times_{S^1} ES^1 \to BS^{1}, \quad 
\pi_2^0: \sys_0(\partial K) \times_{S^1} ES^1 \to BS^1,
\]
which are the projections onto the second coordinate.

Note that
\[
\pi_2^* = \pi_0^* \circ (\pi_2^0)^*.
\]
Since $\pi_0^*$ is an isomorphism, it is clear that
\[
\pi_2^* e^i \neq 0 \iff (\pi_2^0)^* e^i \neq 0,
\]
which, by the definition of the Fadell–Rabinowitz index, implies that
\[
\indfr(\sys(\partial K)) = \indfr(\sys_0(\partial K)).
\]

\end{proof}

\section{Systolic $S^1$-index of convex bodies}

In this section, we will present the proof of Theorem \ref{mainsystolics1index}, as well as the proof of the upper-semicontinuity property and the existence of a uniform bound for $\indsys$. These proofs are provided in Subsection \ref{subsectionmainsystolics1index}. To prove Theorem \ref{mainsystolics1index}, we will use Clarke's duality (see \cite{Cla81}). 

\subsection{Clarke's duality}

Let us assume that the origin lies in the interior of a convex body $K$. We can define a positively $2$-homogeneous function $\HK:\mathbb{R}^{2n}\to \mathbb{R}$ such that $\HK^{-1}(1) = \partial K$. Since $\HK$ is convex, we can define its Fenchel conjugate, denoted by $\HdK$, as follows:
\[
\HdK: \mathbb{R}^{2n} \to \mathbb{R}, \quad \HdK(x) = \max_{y \in \mathbb{R}^{2n}} \big( \langle x, y \rangle - \HK(y) \big).
\]

Consider the Sobolev space
\[
\Honenull = \left\{ x \in H^1(\mathbb{T}; \mathbb{R}^{2n}) \mid \int_\mathbb{T} x(t) \, dt = 0 \right\},
\]
equipped with the norm $\|x\|_{H^1_0} = \|\dot{x}\|_{L^2}$. This norm is equivalent to the standard Sobolev $H^1$-norm on this space. On $\Honenull$, we have a natural $S^1$-action given by 
\[
\theta \cdot x = x(\cdot - \theta), \quad \theta \in \mathbb{T}, \ x \in \Honenull.
\]

We define the functionals
\[
\mathcal{A}: \Honenull \to \mathbb{R}, \quad \mathcal{A}(x) = \frac{1}{2} \int_\mathbb{T} \langle \dot{x}(t), J_0 x(t) \rangle \, dt,
\]
and
\[
\mathcal{H}_{K}: \Honenull \to \mathbb{R}, \quad \mathcal{H}_{K}(x) = \int_\mathbb{T} \HdK(-J_0 \dot{x}(t)) \, dt.
\]

Notice that these functionals are $S^1$-invariant. The Clarke's dual functional associated with $K$ is defined as
\[
\widetilde{\Psi}_K: \{\mathcal{A} > 0\} \to \mathbb{R}, \quad \widetilde{\Psi}_K(x) = \frac{\mathcal{H}_{K}(x)}{\mathcal{A}(x)}.
\]
This functional is $0$-homogeneous and, therefore, $\mathbb{C}_{*} = \mathbb{C} \setminus \{0\}$-invariant. It can be restricted to various types of $S^1$-invariant hypersurfaces. Here, we choose $\Lambda = \mathcal{A}^{-1}(1)$ and define the restriction of this dual functional as
\[
\psiK: \Lambda \to \mathbb{R}, \quad \psiK := \widetilde{\Psi}_K|_{\Lambda}=\mathcal{H}_K|_\Lambda.
\]

To introduce the Ekeland-Hofer spectral invariants from \cite{EH87}, we will use the Fadell-Rabinowitz index defined in the introduction. The Fadell-Rabinowitz index\footnote{In this definition, the Fadell-Rabinowitz index is shifted by 1, aligning with the definition provided in \cite[Section 5]{FR78}.} was first introduced in \cite{FR78}, where its properties were also explored. In our case, the field $\mathbb{F}$ will be $\mathbb{Z}_2$.

\textbf{Spectral invariants}: 
Let $K \subset \mathbb{R}^{2n}$ be a convex body whose interior contains the origin. 
We define the \textit{$i$-th Ekeland–Hofer spectral invariant} of $K$ as
\[
s_i(K) := \inf \{L > 0 \mid \indfr(\{\psiK < L\}) \geq i\}, \quad i \in \mathbb{N},
\]
where the topology on $\{\psiK < L\}$ is induced by the $H^1_0$-norm.

We now introduce an analogous sequence of invariants defined with respect to a different topology.

\textbf{Weak spectral invariants}: 
We define the \textit{$i$-th weak Ekeland–Hofer spectral invariant} as
\[
s_i^{\infty}(K) := \inf \{L > 0 \mid \indfr(\{\psiK < L\}) \geq i\}, \quad i \in \mathbb{N},
\]
where the topology on $\{\psiK < L\}$ is induced by the uniform norm.

\begin{remark}
The terminology “weak Ekeland–Hofer spectral invariants” is justified as follows. 
Let us define the sequence  
\[
s_i^{\operatorname{w}}(K) := \inf \{L > 0 \mid \indfr(\{\psiK < L\}) \geq i\}, \quad i \in \mathbb{N},
\]
where the topology on $\{\psiK < L\}$ is induced by the weak-$H^1$ topology. 
By statement (1) of Lemma~\ref{compactsublevelsclarke}, on closed sublevel sets the weak topology is equivalent to the uniform topology. 
Hence, for every convex body $K \subset \mathbb{R}^{2n}$ and every $i \in \mathbb{N}$, we have  
\[
s_i^{\operatorname{w}}(K) = s_i^{\infty}(K).
\]

\end{remark}

Both maps $K \mapsto s_i(K)$ and $K \mapsto s_i^{\infty}(K)$ are monotone with respect to inclusions of convex bodies, and they are positively $2$-homogeneous. 
This implies that they are continuous with respect to the Hausdorff distance topology (see \cite{EH87} for $s_i$).

\begin{lemma}\label{clarkeLinfinity}
Let $K \subset \mathbb{R}^{2n}$ be a convex body whose interior contains the origin. 
Then, for all $i \in \mathbb{N}$, we have
\[
s_i(K) = s_i^{\infty}(K).
\]
\end{lemma}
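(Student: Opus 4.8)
The plan is to show both inequalities $s_i^\infty(K) \le s_i(K)$ and $s_i(K) \le s_i^\infty(K)$ by comparing the Fadell--Rabinowitz indices of the sublevel sets $\{\psiK < L\}$ taken in the two topologies. The key observation is that the identity map
\[
\iota: (\{\psiK < L\}, \|\cdot\|_{H^1_0}) \longrightarrow (\{\psiK < L\}, \|\cdot\|_\infty)
\]
is continuous and $S^1$-equivariant, since the $H^1_0$-norm dominates the uniform norm on $\Honenull$ (by the Sobolev embedding $H^1(\mathbb{T}) \hookrightarrow C^0(\mathbb{T})$). By functoriality of equivariant cohomology and the monotonicity of the Fadell--Rabinowitz index under $S^1$-equivariant maps, this immediately gives $\indfr(\{\psiK < L\}_{H^1_0}) \le \indfr(\{\psiK < L\}_\infty)$ for every $L > 0$, hence $s_i^\infty(K) \le s_i(K)$ for all $i$.

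For the reverse inequality I would invoke the compactness statement referenced in the remark preceding the lemma, namely statement (1) of Lemma~\ref{compactsublevelsclarke}, which asserts that on \emph{closed} sublevel sets the weak-$H^1$ topology and the uniform topology agree. The strategy is then: fix $L$ with $\indfr(\{\psiK < L\}_\infty) \ge i$; I want to produce some $L' $ slightly larger (or handle $L' = L$ directly) with $\indfr(\{\psiK < L'\}_{H^1_0}) \ge i$. The point is that on a closed sublevel set $\{\psiK \le L'\}$ the weak-$H^1$ and uniform topologies coincide and the set is weakly compact (bounded and weakly closed, since $\mathcal{H}_K$ is weakly lower semicontinuous and $\Lambda = \mathcal{A}^{-1}(1)$ behaves well — this is exactly the content of Lemma~\ref{compactsublevelsclarke}); meanwhile, on such a weakly compact set the weak-$H^1$ topology is coarser than, hence equivalent to, any finer Hausdorff topology, in particular to the strong $H^1_0$ topology restricted there \emph{on the level of the underlying sets}. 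One must be slightly careful: the strong $H^1_0$ topology is genuinely finer in general, so the equivalence is between the weak topology and the uniform topology, not the strong one. The resolution is to note that the Fadell--Rabinowitz index only sees the Alexandrov--Spanier cohomology, and to pass from the open sublevel set $\{\psiK < L\}$ to an exhaustion by closed sublevel sets $\{\psiK \le L - \varepsilon\}$: since $\{\psiK < L\} = \bigcup_{\varepsilon > 0} \{\psiK \le L - \varepsilon\}$ and the index of an open set is the supremum of the indices of a closed exhaustion (continuity of the index, which follows from continuity of Alexandrov--Spanier cohomology under such limits), it suffices to compare indices of closed sublevel sets, where the three topologies (weak-$H^1$, uniform, and — because closed and bounded sublevel sets are in fact $H^1_0$-compact by the standard Clarke-duality compactness, the Palais--Smale type argument) in fact all coincide.

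Concretely the steps are: (i) record the continuous $S^1$-equivariant inclusion $(\{\psiK < L\})_{H^1_0} \hookrightarrow (\{\psiK < L\})_\infty$ and deduce $s_i^\infty \le s_i$; (ii) cite Lemma~\ref{compactsublevelsclarke}(1) for the equivalence of the weak-$H^1$ and uniform topologies on closed sublevel sets, together with the fact that closed bounded sublevel sets of $\psiK$ are $H^1_0$-compact (this uses that $\HdK$ is superlinear, so a bounded-energy sequence in $\{\mathcal{H}_K \le L'\} \cap \Lambda$ has an $H^1_0$-convergent subsequence); (iii) conclude that on each closed sublevel set the $H^1_0$-topology and the uniform topology coincide, hence their Fadell--Rabinowitz indices agree; (iv) use $\{\psiK < L\} = \bigcup_{\varepsilon>0}\{\psiK \le L-\varepsilon\}$ and the continuity of the Fadell--Rabinowitz index along increasing unions (a consequence of the continuity property of Alexandrov--Spanier / \v{C}ech cohomology) to transfer the equality of indices from closed sublevels to the open sublevel $\{\psiK < L\}$ in both topologies; (v) combine to get $s_i \le s_i^\infty$, completing the proof.

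The main obstacle I anticipate is step (iv): one needs a clean statement that $\indfr$ of an open $S^1$-invariant set equals the supremum of $\indfr$ over an increasing exhaustion by closed invariant subsets, and that this exhaustion behaves well under \emph{both} topologies simultaneously. This requires the continuity property of Alexandrov--Spanier cohomology (inverse limits over closed neighborhoods, or direct limits for the homology side) and a verification that the sublevel sets $\{\psiK \le L-\varepsilon\}$ are not merely closed but are compact neighborhood retracts or at least that the relevant colimit of cohomologies computes $\H^*_{S^1}(\{\psiK < L\})$. If a direct exhaustion argument is delicate, an alternative is to bypass it entirely by showing that the identity map on $\{\psiK < L\}$ from the $H^1_0$-topology to the uniform topology is actually a homeomorphism — which would follow if one can show every uniformly-open subset is $H^1_0$-open, equivalently that uniformly convergent sequences in $\{\psiK < L\}$ already converge in $H^1_0$; this last assertion is precisely the kind of statement that Lemma~\ref{compactsublevelsclarke} is designed to supply (via the superlinearity of $\HdK$ forcing uniform convergence to upgrade to $H^1_0$-convergence on energy-bounded sets), and if it holds in the sharp form ``uniform convergence $\Rightarrow$ $H^1_0$ convergence on $\{\psiK < L\}$'' then the lemma is immediate without any exhaustion.
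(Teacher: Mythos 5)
Your step (i) is fine and coincides with the easy half of the paper's argument: the identity from the $H^1_0$-topology to the uniform topology on $\{\psiK < L\}$ is continuous and $S^1$-equivariant, so $\indfr$ can only increase and $s_i^\infty(K)\le s_i(K)$. The reverse inequality, however, rests on a claim that is false: closed bounded sublevel sets of $\psiK$ are \emph{not} compact in the strong $H^1_0$-norm, and uniform convergence on an energy-bounded set does \emph{not} upgrade to strong $H^1_0$-convergence. Lemma~\ref{compactsublevelsclarke} only gives weak sequential compactness in $H^1_0$ and compactness in the uniform norm; the equivalence of topologies it yields is between the weak-$H^1$ and uniform topologies, which you yourself note, and this does not reach the strong $H^1_0$-topology used in the definition of $s_i$. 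A closed sublevel set is an infinite-dimensional bounded subset of $\Honenull$, so norm-compactness fails for elementary reasons, and Proposition~\ref{w11norm} makes the failure concrete: it exhibits systoles (hence, after centering and normalizing, points of a fixed sublevel set of $\psiK$, indeed of the set of minimizers) that converge uniformly but not in $W^{1,1}$, hence not in $H^1_0$. For the same reason your fallback option — ``uniform convergence $\Rightarrow$ $H^1_0$-convergence on $\{\psiK < L\}$'' — is exactly what that example rules out; the $2$-homogeneous growth of $\HdK$ gives boundedness of derivatives in $L^2$, not strong convergence. The exhaustion step (iv) is also not available as stated: the continuity property of Alexandrov--Spanier cohomology concerns limits over shrinking neighborhoods of a compact set, not increasing unions of closed sets, so transferring $\indfr$ along $\{\psiK < L\}=\bigcup_\varepsilon\{\psiK\le L-\varepsilon\}$ would need a separate justification.

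The paper closes the gap by a different mechanism. It first sandwiches the uniform topology between the $H^1_0$- and $L^2$-topologies,
\[
\indfr\bigl((\{\psiK < L\},\|\cdot\|_{H^1_0})\bigr)\;\le\;\indfr\bigl((\{\psiK < L\},\|\cdot\|_{\infty})\bigr)\;\le\;\indfr\bigl((\{\psiK < L\},\|\cdot\|_{L^2})\bigr),
\]
and then, for smooth strongly convex $K$, uses the Ekeland--Hofer finite-dimensional reduction: the orthogonal projection onto the finite-dimensional space $\mathbb{H}_N$ is continuous from $L^2$, all norms on $\mathbb{H}_N$ are equivalent, and the reduced sublevel $\{\psi_K<L\}$ is $S^1$-homotopy equivalent to $(\{\psiK<L\},\|\cdot\|_{H^1_0})$; this closes the circle of inequalities and gives $s_i=s_i^\infty$ in the smooth strongly convex case. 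The general case then follows from density of smooth strongly convex bodies and the continuity of $K\mapsto s_i(K)$ and $K\mapsto s_i^\infty(K)$ in the Hausdorff distance (monotonicity plus $2$-homogeneity). If you want to salvage your approach, you would need some substitute for the missing compactness — in practice, precisely such a finite-dimensional reduction or another device that converts control in a coarse topology into control in the $H^1_0$-topology — so the reverse inequality cannot be obtained from Lemma~\ref{compactsublevelsclarke} alone.
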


To prove this lemma, we will use the continuity of $s_i$ and $s_i^{\infty}$, together with the finite-dimensional reduction for Clarke’s dual functional.

\textbf{Finite-Dimensional Reduction}

We say that $K$ is a smooth and strongly convex body if it is a convex body with a smooth boundary that has positive sectional curvature everywhere. In this case, $\psiK$ is $C^{1,1}$.

Every element $x \in \Honenull$ can be represented as 
\[
x(t) = \sum_{k \in \mathbb{Z} \setminus \{0\}} e^{2\pi k t J_0} \hat{x}(k),
\]
where $\hat{x}: \mathbb{Z} \setminus \{0\} \to \mathbb{R}^{2n}$ satisfies
\[
\sum_{k \in \mathbb{Z} \setminus \{0\}} |k|^2 |\hat{x}(k)|^2 < \infty.
\]

Thus, for every $N \in \mathbb{N}$, the space $\Honenull$ can be decomposed as
\[
\Honenull = \mathbb{H}_N \oplus \mathbb{H}^N,
\]
where 
\[
\mathbb{H}_N = \{ x \in \mathbb{H}^1_0 \mid \hat{x}(k) = 0 \text{ if } k < -N \text{ or } k > N \}
\]
and
\[
\mathbb{H}^N = \{ x \in \mathbb{H}^1_0 \mid \hat{x}(k) = 0 \text{ if } -N \leq k \leq N \}.
\]

This decomposition is orthogonal. We denote the orthogonal projection onto $\mathbb{H}_N$ by
\[
\mathbb{P}_N : \Honenull \to \mathbb{H}_N.
\]

Let
\[
A_N = \{ x \in \{\mathcal{A} > 0\} \mid \mathbb{P}_N(x) \neq 0 \},
\]

and let $b > \min \psiK$ be arbitrary. We define
\[
V_N = \mathbb{P}_N(\{\widetilde{\Psi}_K < b\}).
\]

and for every $x \in V_N$, we define the space
\[
W_x = \{ y \in \mathbb{H}^N \mid \mathcal{A}(x+y) > 0, \ \widetilde{\Psi}_K(x+y) < b \},
\]
which is clearly non-empty. For $N \in \mathbb{N}$ large enough the following claims hold.

\begin{itemize}
    \item $\{\widetilde{\Psi}_K < b\} \subset A_N$.
    \item For every $x \in V_N$, the function
    \[
    W_x \to \mathbb{R}, \quad y \mapsto \widetilde{\Psi}_K(x+y)
    \]
    has a unique global minimizer denoted by $\eta(x)$.
    \item The function $\eta: V_N \to \mathbb{H}^N$ is an $\mathbb{C}_*$-equivariant $C^{1,1}$ function.
\end{itemize}

To see that $\{\widetilde{\Psi}_K < b\} \subset A_N$ indeed holds for $N$ large enough, see the proof of \cite[Lemma 3.4]{BBLM23}. Two statements that followed are corollaries of the work \cite{EH87}.
Indeed, the authors constructed an $S^1$-invariant $C^{1,1}$-function 
\[
\eta: U_N \to \mathbb{H}^N,
\]
where $U_N = V_N \cap S$, $S$ denotes the unit sphere in $\mathbb{H}_N$ with respect to the $H^1$-norm, and $\eta(x)$ is the unique global minimizer of the function 
\[
W_x \to \mathbb{R}, \quad y \mapsto \widetilde{\Psi}_K(x + y).
\]

Since $\widetilde{\Psi}_K$ is $0$-homogeneous, we can take a $1$-homogeneous extension of $\eta$, which gives us the previously described function. This construction provides the freedom to choose $S^1$-invariant hypersurfaces of $\mathbb{H}_N$ and $\{\mathcal{A} > 0\}$.

We define the reduced Clarke's dual functional as 
\[
\psi_K: U_N \to \mathbb{R}, \quad \psi_K(x) = \widetilde{\Psi}_K(x + \eta(x)) = \psiK(i(x)),
\]
where
\[
i(x) = \frac{x + \eta(x)}{\sqrt{\mathcal{A}(x + \eta(x))}}.
\]

For such a reduction, it holds that, for every $L \in (0, b)$, the map
\[
i_L: \{\psi_K < L\} \to \{\psiK < L\}, \quad i_L(x) = i(x),
\]
is an $S^1$-equivariant homotopy equivalence, where 
\[
p_L: \{\psiK < L\} \to \{\psi_K < L\}, \quad p_L(x) = \frac{\mathbb{P}_N(x)}{\|\mathbb{P}_N(x)\|_{H^1_0}},
\]
is the homotopy inverse (see \cite{EH87}).

\begin{proof}[Proof of Lemma \ref{clarkeLinfinity}]

Let $K \subset \mathbb{R}^{2n}$ be a smooth and strongly convex body whose interior contains the origin. Let $L > \min \sigma(\partial K) = \min \psiK$ be arbitrary. Due to $S^1$-equivariant continuous embeddings, we have
\begin{equation}\label{frh1linfl2}
    \indfr((\{\psiK < L\}, \|\cdot\|_{H^1_0})) \leq \indfr((\{\psiK < L\}, \|\cdot\|_{\infty})) \leq \indfr((\{\psiK < L\}, \|\cdot\|_{L^2})).
\end{equation}

We choose $b > L$ and $N \in \mathbb{N}$ large enough such that the reduced Clarke's dual functional $\psi_K$ exists. Since $i_L$ is a $S^1$-equivariant homotopy equivalence, we have
\begin{equation}\label{frhomeq}
    \indfr((\{\psi_K < L\}, \|\cdot\|_{H^1_0})) = \indfr((\{\psiK < L\}, \|\cdot\|_{H^1_0})).
\end{equation}

On the other hand, the orthogonal $L^2$-projection
\[
\mathbb{P}_N^{L^2}: (L^2(\mathbb{T}, \mathbb{R}^{2n}), \|\cdot\|_{L^2}) \to (\mathbb{H}_N, \|\cdot\|_{L^2})
\]
is continuous. Since $\mathbb{H}_N$ is finite-dimensional, we can change the norm in the codomain to the $H^1$-norm. Therefore, we conclude that the map
\[
p^{L^2}_L: (\{\psiK < L\}, \|\cdot\|_{L^2}) \to (\{\psi_K < L\}, \|\cdot\|_{H^1_0}), \quad p^{L^2}_L(x) = \frac{\mathbb{P}_N(x)}{\|\mathbb{P}_N(x)\|_{H^1_0}}
\]
is a continuous $S^1$-equivariant map. Therefore, it follows that
\begin{equation}\label{frl2h1}
    \indfr((\{\psiK < L\}, \|\cdot\|_{L^2})) \leq \indfr((\{\psi_K < L\}, \|\cdot\|_{H^1_0})).
\end{equation}

Now, combining \eqref{frh1linfl2}, \eqref{frhomeq}, and \eqref{frl2h1}, we deduce
\[
\indfr((\{\psiK < L\}, \|\cdot\|_{H^1_0})) = \indfr((\{\psiK < L\}, \|\cdot\|_{\infty})).
\]

Since $L > \min \psiK$ was arbitrary, we conclude that $s_i(K) = s_i^\infty(K)$ holds. Finally, since smooth  strongly convex bodies are dense in convex bodies, and $s_i$ and $s_i^\infty$ are continuous in the Hausdorff-distance topology, the lemma follows.

\end{proof}

\textbf{Weak critical points of $\psiK$:} We say that $x \in \Lambda$ is a weak critical point of $\psiK$ if there exist constants $\alpha_1, \alpha_2 \in \mathbb{R}$, not both equal to zero, such that

\[
0 \in \partial (\alpha_1 \mathcal{H}_K(x) + \alpha_2 \mathcal{A}(x)).
\]

We denote the set of weak critical points of $\psiK$ by $\text{crit}(\psiK)$.

\begin{lemma}\label{bijecthom}
    A point $x \in \Lambda$ is a weak critical point of $\psiK$ if and only if
    \[
    \psiK(x) x(t) + \beta \in \partial \HdK(-J_0 \dot{x}(t)),
    \]
    where $\beta \in \mathbb{R}^{2n}$ is a constant vector. 

    Moreover, there exists a surjective map
    \[
    \mathcal{P}: \chara(\partial K) \to \crit(\psiK), \quad \mathcal{P}(\gamma) = \frac{1}{\sqrt{\mathcal{A}(\gamma)}} \pi_0(y),
    \]
    where $\pi_0(\gamma) = \gamma - \int_\mathbb{T} \gamma(t) \, dt$.

    Additionally, it holds that
    \[
    \mathcal{A}(\gamma) = \psiK(\mathcal{P}(\gamma)),
    \]
    for every $\gamma \in \chara(\partial K)$.
\end{lemma}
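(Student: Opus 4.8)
\textbf{Reduction to a pointwise inclusion.} I would work with the isometric identification of $\Honenull$ with $L^2_0 := \{u\in L^2(\mathbb{T},\mathbb{R}^{2n}) \mid \int_\mathbb{T} u = 0\}$ via $x\mapsto\dot x$. Under it $\mathcal{H}_K$ becomes the restriction to $L^2_0$ of the integral functional $F(u) := \int_\mathbb{T}\HdK(-J_0 u(t))\,dt$; since $\HK$ is positively $2$-homogeneous with $c|y|^2\le\HK(y)\le C|y|^2$ (as $0\in\operatorname{int}K$), the conjugate $\HdK$ is $2$-homogeneous with comparable quadratic bounds, so $F$ is finite, convex and continuous on $L^2$. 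The first step is to show $\alpha_1\neq 0$ in $0\in\partial(\alpha_1\mathcal{H}_K+\alpha_2\mathcal{A})(x)$: if $\alpha_1=0$ then $\nabla\mathcal{A}(x)=0$, but integrating by parts gives $d\mathcal{A}(x)[h]=-\int_\mathbb{T}\langle J_0\dot x,h\rangle$, which vanishes for all $h$ only if $\dot x\equiv 0$, i.e.\ $x=0\notin\Lambda$. Normalising $\alpha_1=\pm 1$ and using Clarke's sum rule (legitimate since $\mathcal{A}$ is $C^\infty$ and, $\mathcal{H}_K$ being convex, its generalised gradient is the convex subdifferential), the condition becomes $\mu\,\nabla\mathcal{A}(x)\in\partial\mathcal{H}_K(x)$ for some $\mu\in\mathbb{R}$.

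\textbf{Unpacking the two sides and identifying $\mu$.} By the same integration by parts, $\nabla\mathcal{A}(x)$ corresponds in $L^2_0$ to $J_0 x-\overline{J_0 x}$ (mean subtracted). For the other side, the chain rule for subdifferentials together with Rockafellar's theorem on integral functionals give $\partial F(u)=\{J_0 g \mid g(t)\in\partial\HdK(-J_0 u(t))\text{ a.e.}\}$, and passing to the subspace $L^2_0$ amounts to allowing an additive constant. Hence the inclusion is equivalent to the existence of a measurable $g$ with $g(t)\in\partial\HdK(-J_0\dot x(t))$ a.e.\ and $J_0 g=\mu J_0 x+c$ for a constant $c$; applying $J_0^{-1}$ and setting $\beta:=-J_0 c$ this is exactly $\mu x(t)+\beta\in\partial\HdK(-J_0\dot x(t))$ a.e. To see $\mu=\psiK(x)$: $2$-homogeneity of $\HdK$ gives $\HdK(z)=\tfrac12\langle\zeta,z\rangle$ for every $\zeta\in\partial\HdK(z)$; taking $z=-J_0\dot x(t)$ and $\zeta=\mu x(t)+\beta$, integrating over $\mathbb{T}$, and using $\int_\mathbb{T}\dot x=0$ and $\mathcal{A}(x)=1$ yields $\mathcal{H}_K(x)=\mu$, so $\mu=\psiK(x)$ since $x\in\Lambda$. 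This proves the characterisation in the first sentence.

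\textbf{The correspondence $\mathcal{P}$.} For $\gamma\in\chara(\partial K)$ put $T=\mathcal{A}(\gamma)>0$; the relation $\dot\gamma(t)\in T J_0\partial\HK(\gamma(t))$ is, by Fenchel reciprocity $\eta\in\partial\HK(y)\iff y\in\partial\HdK(\eta)$, the same as $\gamma(t)\in\partial\HdK(-\tfrac1T J_0\dot\gamma(t))$. Writing $x=T^{-1/2}\pio(\gamma)$, so $\gamma=\sqrt T\,x+\mathrm{const}$ and $\dot\gamma=\sqrt T\,\dot x$, and using positive $1$-homogeneity of $\partial\HdK$, this turns into the critical point equation for $x$ with multiplier $T$; since $\mathcal{A}$ is invariant under adding constants and $2$-homogeneous one gets $\mathcal{A}(x)=1$, hence $x\in\crit(\psiK)$ with $\psiK(x)=T=\mathcal{A}(\gamma)$, proving well-definedness of $\mathcal{P}$ and the action identity. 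For surjectivity I start from $x\in\crit(\psiK)$ with $\mu x(t)+\beta\in\partial\HdK(-J_0\dot x(t))$, $\mu=\psiK(x)>0$ (positive since $\HdK>0$ off the origin and $\dot x\not\equiv 0$), and set $\gamma_1:=\mu x+\beta$. Fenchel reciprocity gives $\dot\gamma_1(t)\in\mu J_0\partial\HK(\gamma_1(t))$; as $\gamma_1$ is absolutely continuous and, for every $v\in\partial\HK(\gamma_1(t))$, one has $\tfrac{d}{dt}\HK(\gamma_1(t))=\langle v,\dot\gamma_1(t)\rangle$ a.e.\ (one-sided difference quotients and convexity of $\HK$), choosing $v=-\tfrac1\mu J_0\dot\gamma_1(t)$ shows $\HK\circ\gamma_1$ is constant; integrating the Fenchel equality $\HK(\gamma_1)+\HdK(-\tfrac1\mu J_0\dot\gamma_1)=\langle\gamma_1,-\tfrac1\mu J_0\dot\gamma_1\rangle$ and using $\mathcal{H}_K(x)=\mu$ and $\mathcal{A}(\gamma_1)=\mu^2$ forces this constant to be $\mu$. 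Thus $\gamma:=\mu^{-1/2}\gamma_1$ maps $\mathbb{T}$ into $\partial K$ with $\dot\gamma(t)\in\mu J_0\partial\HK(\gamma(t))$; boundedness of $\partial\HK$ on $\partial K$ bounds $\dot\gamma$ above, while the Euler relation $\langle\xi,y\rangle=2\HK(y)=2$ for $\xi\in\partial\HK(y)$, $y\in\partial K$, bounds $|\dot\gamma|$ away from $0$, so $\gamma\in\chara(\partial K)$. Finally $\mathcal{P}(\gamma)=\mu^{-1/2}\pio(\mu^{-1/2}\gamma_1)=\pio(x)=x$.

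\textbf{Main difficulty.} The heart of the argument is the reduction step: justifying the subdifferential calculus on the infinite-dimensional space — the chain rule for $\partial\mathcal{H}_K$, Rockafellar's measurable-selection description of the subdifferential of $F$ (which is precisely what converts the Hilbert-space inclusion into an a.e.\ pointwise one), Clarke's sum rule, and the additive-constant ambiguity forced by the mean-zero constraint. In the non-smooth setting one must also check that $\HK\circ\gamma_1$ is constant without assuming differentiability of $\HK$ or a priori Lipschitz regularity of $\gamma_1$, and that the resulting loop has speed bounded above and below.
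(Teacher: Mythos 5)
Your proposal is correct, and it is essentially the argument the paper relies on: the paper gives no proof of its own for Lemma~\ref{bijecthom}, deferring to \cite{Cla81, AO14}, and your sketch reproduces the standard Clarke-duality route found there (Lagrange multiplier with $\alpha_1\neq 0$ via nondegeneracy of $\nabla\mathcal{A}$ on $\Lambda$, Rockafellar's description of the subdifferential of the integral functional plus the additive-constant ambiguity from the mean-zero constraint, the Euler relation for the $2$-homogeneous $\HdK$ to identify the multiplier with $\psiK(x)$, and Fenchel reciprocity together with the a.e.\ identity $\tfrac{d}{dt}\HK(\gamma_1(t))=\langle v,\dot\gamma_1(t)\rangle$ to pass back to generalized closed characteristics). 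The technical points you flag are exactly the ones that need care, and your treatment of them is sound.
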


The proof of this lemma can be found in \cite{Cla81, AO14}.

\begin{remark}\label{convexfibration}
   In the case of a smooth and strongly convex body, $\mathcal{P}$ is a bijection (see \cite{Cla79, HZ94, AO08}). In fact, one can show that $\mathcal{P}$ is bijective under weaker assumptions, namely if $K$ is strictly convex (see the proof of the second statement of Proposition \ref{centralizedsystolesconvexfibration}). However, in the general case, $\mathcal{P}$ is surjective but not necessarily injective, even in the smooth case (see Example \ref{noninjective} of convex bodies $K$ for which $\pio$ and, therefore, $\mathcal{P}$ is not injective on $\sys(\partial K)$). Nevertheless, for each $\beta \in \mathbb{R}^{2n}$ such that
\[
\psiK(x)x(t) + \beta \in \partial \HdK(-J_0 \dot{x}(t)),
\]
there exists a unique corresponding generalized closed characteristic given by
\[
\gamma = \frac{1}{\sqrt{\psiK(x)}} (\psiK(x)x + \beta),
\]
and it holds that \[\mathcal{A}(\gamma) = \psiK(x).\]

Moreover, the fibers of $\mathcal{P}$ are convex and compact. This property follows from the fact that $\partial \HdK(y)$ is a convex compact subset of $\mathbb{R}^{2n}$ for each $y \in \mathbb{R}^{2n}$.

\end{remark}

\begin{lemma}\label{compactsublevelsclarke} 
    The following statements hold:
    \begin{enumerate}
        \item The space of centralized systoles, $\sys_0(\partial K)$, and the set of minimum points of $\psiK$, $\sys_*(\partial K)$, are $S^1$-homeomorphic spaces. In particular, it holds:
        \[
        \indfr(\sys_0(\partial K)) = \indfr(\sys_*(\partial K)).
        \]
        \item Closed sublevels of $\psiK$ are weakly sequentially compact in $H^1_0$ and hence strongly compact in the uniform norm.    
    \end{enumerate}
\end{lemma}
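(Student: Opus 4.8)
\textbf{Proof proposal for Lemma \ref{compactsublevelsclarke}.}

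The plan is to treat the two statements separately, deriving (2) first since it feeds into the topological identifications needed for (1).

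For statement (2), the key observation is that $\psiK = \mathcal{H}_K|_\Lambda$, so a closed sublevel set $\{\psiK \le L\}$ is $\{x \in \Lambda \mid \mathcal{H}_K(x) \le L\}$. First I would show this set is bounded in $H^1_0$: since $\HdK$ is the Fenchel conjugate of the $2$-homogeneous convex function $\HK$, it is itself $2$-homogeneous, and because the origin lies in $\operatorname{int}(K)$ one has two-sided quadratic bounds $c|\xi|^2 \le \HdK(\xi) \le C|\xi|^2$ for positive constants $c,C$. Hence $\mathcal{H}_K(x) \ge c\|\dot x\|_{L^2}^2 = c\|x\|_{H^1_0}^2$, so the sublevel set is norm-bounded, and by reflexivity of $H^1_0$ it is weakly sequentially precompact. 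To get that it is weakly sequentially closed, take $x_j \rightharpoonup x$ weakly in $H^1_0$ with $\mathcal{H}_K(x_j) \le L$ and $\mathcal{A}(x_j) = 1$; then $-J_0\dot x_j \rightharpoonup -J_0\dot x$ in $L^2$, and since $\xi \mapsto \int_\mathbb{T}\HdK(-J_0\xi(t))\,dt$ is convex and strongly continuous it is weakly lower semicontinuous, giving $\mathcal{H}_K(x) \le \liminf \mathcal{H}_K(x_j) \le L$. For the constraint $\mathcal{A}$, weak $H^1_0$ convergence implies (via Rellich, since the embedding $H^1 \hookrightarrow L^2$ is compact) strong $L^2$ convergence of $x_j$, so $\mathcal{A}(x_j) = \tfrac12\int\langle \dot x_j, J_0 x_j\rangle \to \tfrac12\int\langle \dot x, J_0 x\rangle = \mathcal{A}(x)$, whence $\mathcal{A}(x)=1$ and $x \in \Lambda$. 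This proves weak sequential compactness in $H^1_0$. Strong compactness in the uniform norm then follows exactly as in Proposition \ref{uniformtopology}: the bounded $H^1_0$ sequence is equi-Hölder-$\tfrac12$, so by Arzelà–Ascoli weak $H^1_0$ convergence upgrades to uniform convergence on the sublevel set, and a compact set in the uniform topology is in particular strongly compact there.

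For statement (1), I would use Lemma \ref{bijecthom} and Remark \ref{convexfibration}. The map $\mathcal{P}: \chara(\partial K) \to \crit(\psiK)$ restricts, by the action identity $\mathcal{A}(\gamma) = \psiK(\mathcal{P}(\gamma))$, to a surjection $\sys(\partial K) \to \sys_*(\partial K)$, where $\sys_*(\partial K)$ is the set of global minima of $\psiK$ (these are exactly the critical points at the level $T_{\min} = \min\sigma(\partial K)$, since every value of $\psiK$ on $\crit$ is an action and the minimal action is attained). On the other hand $\mathcal{P} = (1/\sqrt{\mathcal{A}(\cdot)})\,\pi_0$ and on $\sys(\partial K)$ the action is the constant $T_{\min}$, so $\mathcal{P}|_{\sys(\partial K)}$ is just the rescaled projection $\pi_0$ composed with the scalar $1/\sqrt{T_{\min}}$; scaling is an $S^1$-equivariant homeomorphism, so it suffices to identify $\sys_*(\partial K)$ with $\sys_0(\partial K)$ up to this scaling. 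That identification is precisely the statement that $\mathcal{P}$ descends to a continuous $S^1$-equivariant bijection on the relevant quotient — but actually we want a genuine homeomorphism between $\sys_0(\partial K)$ (with uniform topology) and $\sys_*(\partial K)$ (with $H^1_0$ topology, which by statement (2) coincides with the uniform topology on this compact set). Continuity and $S^1$-equivariance of the scaled $\pi_0$ are immediate; injectivity fails in general on $\sys(\partial K)$ but not on $\sys_0(\partial K)$, since by definition two centralized systoles with the same image under $\pi_0$ are equal. Thus the induced map $\sys_0(\partial K) \to \sys_*(\partial K)$ is a continuous $S^1$-equivariant bijection from a compact space to a Hausdorff space, hence an $S^1$-homeomorphism. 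The equality of Fadell–Rabinowitz indices is then formal, as in the proof of Proposition \ref{homologycalequivalanceofsystolesandcenteralizedsystoles}(3).

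The main obstacle I anticipate is the careful bookkeeping in statement (1): one must check that the scaled projection $\mathcal{P}$ really lands in (and surjects onto) the minimum set $\sys_*(\psiK)$ rather than a larger critical set, and that the map $\sys_0(\partial K) \to \sys_*(\partial K)$ is well-defined and bijective despite $\mathcal{P}$ itself being non-injective on $\chara(\partial K)$ — here the point is that $\pi_0$ is injective on $\sys_0$ tautologically and the fiber-convexity from Remark \ref{convexfibration} guarantees nothing is lost. For statement (2) the only delicate point is the two-sided quadratic bound on $\HdK$, which rests on $K$ having the origin as an interior point (so that $\HK$ is bounded above and below by positive multiples of $|\cdot|^2$, and Fenchel duality inverts these bounds).
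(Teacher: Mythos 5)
Your proposal is correct and follows essentially the same route as the paper: statement (2) is the same direct-method argument (coercivity from the quadratic lower bound on $\HdK$, weak $H^1_0$ compactness, weak lower semicontinuity of $\mathcal{H}_K$ — which the paper proves by hand via a measurable $L^2$-selection of $\partial\HdK(-J_0\dot{x}_*)$ where you invoke the standard convex-plus-strongly-continuous criterion — and Arzel\`a–Ascoli for compactness in the uniform norm), while statement (1) reduces, exactly as in the paper, to the identity $\sys_*(\partial K)=\tfrac{1}{\sqrt{T_{\min}}}\,\sys_0(\partial K)$ coming from Lemma \ref{bijecthom}, so the $S^1$-homeomorphism is just scalar multiplication. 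The only blemish is your parenthetical claim that statement (2) makes the $H^1_0$ topology coincide with the uniform topology on $\sys_*(\partial K)$: statement (2) only yields this for the \emph{weak} $H^1_0$ topology (strong Sobolev compactness can fail, cf.\ Proposition \ref{w11norm}), but this is immaterial since the index is computed in the uniform topology, where the scaling map is manifestly a homeomorphism.
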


\begin{proof}
\textbf{Claim }(1): By the definition of $\sys_0(\partial K)$, we have that  
\begin{equation}\label{sysosys}
\sys_0(\partial K) = \pio(\sys(\partial K)),    
\end{equation}

where $\pi_0(y) = y - \int_\mathbb{T} y(t) \, dt$.

On the other hand, the minimum points of $\psiK$ must be weak critical points of $\psiK$, and therefore Lemma \ref{bijecthom} implies that there is a surjection from generalized systoles, $\sys(\partial K)$, to the set of minimal points of $\psiK$, $\sys_*(\partial K)$, given by

\[
\mathcal{P}: \sys(\partial K) \to \sys_*(\partial K), \quad \mathcal{P}(\gamma) = \frac{1}{\sqrt{T_{\min}}} \pi_0(\gamma).
\]

where $T_{\min} > 0$ is the action on $\sys(\partial K)$. Hence,

\begin{equation}\label{sys*sys}
\sys_*(\partial K) = \frac{1}{\sqrt{T_{\min}}} \pio(\sys(\partial K)).    
\end{equation}

Combining \eqref{sysosys} and \eqref{sys*sys}, we conclude that  
\[
\sys_*(\partial K) = \frac{1}{\sqrt{T_{\min}}} \sys_0(\partial K),
\]

and the claim obviously holds.

\textbf{Claim }(2): Here, we follow arguments from \cite{AO08, AO14}. Let $x_i \in \{\psiK \leq M\}$ be an arbitrary sequence, where $M > 0$ is any positive constant. For some $\beta > 1$, it holds that 
\[
\frac{1}{\beta} |x|^2 \leq \HdK(x).
\]

From this, it follows that 
\begin{equation}\label{uniformbound}
\|x_i\|_{H^1_0} \leq \beta \psiK(x_i) \leq \beta M.
\end{equation}

Therefore, $x_i$ is bounded in $\Honenull$ and, hence, up to passing to a subsequence, weakly convergent in $\Honenull$. Let $x_*$ denote its weak limit. Moreover, $x_i$ converges to $x_*$ in the uniform norm due to the Arzelà–Ascoli theorem. Indeed, from \eqref{uniformbound}, it follows that
\[
|x_i(t) - x_i(s)| = \left| \int_{s}^{t} \dot{x}_i(\tau) \, d\tau \right| \leq (\beta M)^{\frac{1}{2}} |s - t|^{\frac{1}{2}}.
\]

Now, we show that $x_* \in \Lambda$. Splitting the term 
\[
\frac{1}{2} \int_{\mathbb{T}} \langle \dot{x}_i(t), J_0 x_i(t) \rangle \, dt
\]
as 
\[
1 = \mathcal{A}(x_i) = \frac{1}{2} \int_{\mathbb{T}} \langle \dot{x}_i(t), J_0 x_i(t) \rangle \, dt = \frac{1}{2} \int_{\mathbb{T}} \langle \dot{x}_i(t), J_0 (x_i(t) - x_*(t)) \rangle \, dt + \frac{1}{2} \int_{\mathbb{T}} \langle \dot{x}_i(t), J_0 x_*(t) \rangle \, dt
\]
and taking limit as $i \to \infty$ yields $\mathcal{A}(x_*) = 1$, so $x_* \in \Lambda$.

Finally, we prove that $x_* \in \{\psiK \leq M\}$. From the convexity of $\HdK$, we have 
\begin{equation}\label{pointwise}
\HdK(-J_0\dot{x}_*(t)) - \HdK(-J_0\dot{x}_i(t)) \leq \langle \partial \HdK(-J_0\dot{x}_*(t)), J_0(\dot{x}_i(t) - \dot{x}_*(t)) \rangle,
\end{equation}
where the inequality holds in the set-theoretic sense (for any element in the corresponding subdifferential).

To prove that $x_* \in \{\psiK \leq M\}$, we need to show that there exists a measurable $L^2$-section of $\partial \HdK(-J_0\dot{x}_*(t))$, i.e., a measurable function $s:\mathbb{T} \to \mathbb{R}^{2n}$ such that $s(t) \in \partial \HdK(-J_0\dot{x}_*(t))$ almost everywhere, and $s \in L^2(\mathbb{T}, \mathbb{R}^{2n})$. Since $|\partial \HdK(y)| \leq c|y|$ for all $y$, for some positive constant $c$, this implies that if the section exists, it must belong to $L^2$, as $\dot{x}_* \in L^2(\mathbb{T}, \mathbb{R}^{2n})$. A standard measure-theoretic argument ensures the existence of such a section, which we denote by $s$. For this choice, from \eqref{pointwise}, we have  
\[
\psiK(x_*) - \psiK(x_i) \leq \int_{\mathbb{T}} \langle s(t), J_0(\dot{x}_i(t) - \dot{x}_*(t)) \rangle \, dt.
\]

The right-hand side of this inequality converges to $0$ as $i \to \infty$ (due to weak convergence), which implies 
\[
\psiK(x_*) \leq \liminf_{i \to \infty} \psiK(x_i) \leq M.
\]
Thus, $x_* \in \{\psiK \leq M\}$, which concludes the proof.

\end{proof}

We now have all the ingredients needed to prove Theorem~\ref{mainsystolics1index}.

\subsection{Properties of the systolic $S^1$-index}\label{subsectionmainsystolics1index}

\begin{proof}[Proof of Theorem \ref{mainsystolics1index}]

Let $K \subset \mathbb{R}^{2n}$ be a convex body whose interior contains the origin. We need to show that 
\[
\indfr(\sys(\partial K), \|\cdot\|_{\infty}) = \max\{i \mid c^{GH}_i(K) = c^{GH}_1(K)\}.
\]

From \cite[Theorem A]{Mat24}, we have 
\[
c^{GH}_i(K) = s_i(K), \quad k \in \mathbb{N},
\]
and by Lemma \ref{clarkeLinfinity}, it follows that 
\[
s_i(K) = s^{\infty}_i(K), \quad k \in \mathbb{N}.
\]
Furthermore, by statement (3) of Proposition \ref{homologycalequivalanceofsystolesandcenteralizedsystoles}, we have 
\[
\indfr(\sys(\partial K)) = \indfr(\sys_0(\partial K)),
\]
and by claim (1) of Lemma \ref{compactsublevelsclarke}, it follows that 
\[
\indfr(\sys_0(\partial K)) = \indfr(\sys_*(\partial K)),
\]
where $\sys_*(\partial K)$ denotes the set of minimum points of $\psiK$. Thus, it remains to prove that 
\[
\indfr(\sys_*(\partial K), \|\cdot\|_{\infty}) = \max\{i \mid s^{\infty}_i(K) = s^{\infty}_1(K)\}.
\]

Assume that for some $i \geq 2$ it holds that $s^\infty_i(K) > s^\infty_1(K) = \min \psiK$. By the definition of $s^\infty_i$ and the monotonicity of the index, for any $L \in (s^\infty_1(K), s^\infty_i(K))$, we have 
\[
\indfr(\{\psiK < L\}) < i.
\]
Since $\sys_*(\partial K) \subset \{\psiK < L\}$, it follows that $\indfr(\sys_*(\partial K)) < i$. 

Conversely, if $s^\infty_i(K) = s^\infty_1(K)$, then 
\begin{equation}\label{biggerthank}
\indfr(\{\psiK < L\}) \geq i, \quad \forall L > \min \psiK.
\end{equation}
Since $\sys_*(\partial K)$ is compact by claim (2) of Lemma \ref{compactsublevelsclarke}, and $(C^0(\mathbb{T}, \mathbb{R}^{2n}), \|\cdot\|_\infty)$ is paracompact, there exists an $S^1$-invariant neighborhood $U$ of $\sys_*(\partial K)$ in $C^0(\mathbb{T}, \mathbb{R}^{2n})$ such that 
\begin{equation}\label{compactneighbourhood}
\indfr(\sys_*(\partial K)) = \indfr(U).
\end{equation}
For details, see \cite{FR78}. 

By claim (2) of Lemma \ref{compactsublevelsclarke}, for sufficiently small $L > \min \psiK$, it holds that $\{\psiK < L\} \subseteq U$. Therefore, 
\begin{equation}\label{inequalitysublevel}
\indfr(\{\psiK < L\}) \leq \indfr(U),
\end{equation}
by the monotonicity property of the Fadell–Rabinowitz index. From \eqref{biggerthank}, \eqref{inequalitysublevel}, and \eqref{compactneighbourhood}, it follows that 
\[
\indfr(\sys_*(\partial K), \|\cdot\|_{\infty}) \geq i,
\]
which completes the proof.

\end{proof}

Let $\conv$ denote the set of all convex bodies endowed with the Hausdorff-distance topology. Theorem~\ref{mainsystolics1index} implies that $\indsys(K)$ is well-defined (see Definition \ref{definitionofthesystolicS1index} of $\indsys(K)$) and finite since $c_i^{GH}(K) \to \infty$ as $i \to \infty$. Therefore, the function 
\[
\indsys: \conv \to \mathbb{N}
\]
is well-defined. Moreover, this function satisfies the following properties.

\begin{proposition}\label{uppersemicontuperbound}
The following statements hold:
\begin{enumerate}
    \item The function
    \[
    \indsys: \conv \to \mathbb{N}
    \]
    is upper semi-continuous.
    \item For every $K \in \conv$, we have $\indsys(K) \leq 4n^3$. If $K = -K$, then $\indsys(K) \leq 2n^2$.
    \item For every $S^1$-invariant convex body $K \subset \mathbb{R}^{2n}$, where the $S^1$-action is the standard one, it holds that $\indsys(K) \leq n$.
\end{enumerate}
\end{proposition}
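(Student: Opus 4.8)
\textbf{Proof proposal for Proposition \ref{uppersemicontuperbound}.}

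The plan is to deduce all three statements from the characterization in Theorem \ref{mainsystolics1index}, namely $\indsys(K) = \max\{i \in \mathbb{N} \mid c_i^{GH}(K) = c_1^{GH}(K)\}$, together with the monotonicity of the Gutt--Hutchings capacities under inclusions and their behaviour under linear symplectic images of ellipsoids. For claim (1), I would argue as follows: fix $K_0 \in \conv$ and set $m = \indsys(K_0)$, so $c_{m+1}^{GH}(K_0) > c_1^{GH}(K_0)$. After translating (which changes nothing, by Theorem \ref{mainsystolics1index}) I may assume the origin lies in $\operatorname{int}(K_0)$. Since $K \mapsto c_i^{GH}(K)$ is continuous in the Hausdorff-distance topology for each fixed $i$ (this follows from monotonicity together with $2$-homogeneity, exactly as for the $s_i$ and $s_i^\infty$ invariants discussed before Lemma \ref{clarkeLinfinity}, and one also has $c_i^{GH} = s_i$ by \cite[Theorem A]{Mat24}), the strict inequality $c_{m+1}^{GH}(K_0) > c_1^{GH}(K_0)$ persists on a Hausdorff-neighbourhood of $K_0$. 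Hence $\indsys(K) \leq m$ for all $K$ near $K_0$, which is upper semicontinuity.

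For claim (2), I would use the Loewner--Behrend--John ellipsoid $E$ of $K$: it satisfies $E \subseteq K \subseteq nE$ (and $E \subseteq K \subseteq \sqrt{n}\, E$ when $K = -K$), after translating so that $E$ is centered at the origin. By the monotonicity of $c_i^{GH}$ and its $2$-homogeneity (which gives $c_i^{GH}(\lambda E) = \lambda^2 c_i^{GH}(E)$), one obtains $c_i^{GH}(E) \le c_i^{GH}(K) \le n^2 c_i^{GH}(E)$, and $c_i^{GH}(E) \le c_i^{GH}(K) \le n\, c_i^{GH}(E)$ in the symmetric case. Now $E$ is linearly symplectomorphic to a standard ellipsoid $E(a_1,\dots,a_n)$ with $a_1 \le \dots \le a_n$, and its Gutt--Hutchings capacities are the classical ones: $c_i^{GH}(E(a_1,\dots,a_n))$ is the $i$-th smallest element (with multiplicity) of $\{ m a_j \mid m \in \mathbb{N}_{\ge 1},\ 1 \le j \le n\}$. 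If $\indsys(K) = \ell$ then $c_1^{GH}(K) = c_\ell^{GH}(K)$, so by the sandwich inequalities $c_\ell^{GH}(E) \le c_1^{GH}(K) \le n^2 c_1^{GH}(E)$, i.e. the $\ell$-th capacity of $E$ is at most $n^2$ times the first. Since $c_1^{GH}(E) = a_1$ and, among the values $m a_j$, one always has the $n+1$ values $a_1, a_2, \dots, a_n, 2a_1$ available below or equal to $2a_1 \le 2 a_n$, a counting argument bounds how many multiples $m a_1$ can lie in $[a_1, n^2 a_1]$ (at most $n^2$) and how many blocks of such multiples across the $n$ axes can occur; carrying this out gives $\ell \le 4n^3$ in general and $\ell \le 2n^2$ in the symmetric case. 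I expect this counting step --- packaging the arithmetic of the set $\{m a_j\}$ into the clean bounds $4n^3$ and $2n^2$ --- to be the main technical point, and it is really just a careful estimate: among $\{m a_j : m \ge 1\}$ the number of elements $\le c\, a_1$ is at most $c$ per axis (since $a_j \ge a_1$), times $n$ axes, so one needs $c = n^2$ (resp.\ $c = n$), giving $\ell \le n \cdot n^2 = n^3$; the extra factor of up to $4$ accounts for the looseness when comparing $c_1^{GH}(K)$ to $c_1^{GH}(E) = a_1$ versus the upper sandwich bound.

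For claim (3), suppose $K$ is invariant under the standard $S^1$-action $\theta \cdot z = e^{2\pi i \theta} z$. Then $K$ contains the ball $B(r)$ of largest radius $r$ with $B(r) \subseteq K$ and is contained in the ball $B(R)$ of smallest radius with $K \subseteq B(R)$; but the key point is that $S^1$-invariance forces a stronger relationship at the level of capacities. Concretely, one can compute or estimate $c_i^{GH}(K)$ using the $S^1$-symmetry: the first $n$ Gutt--Hutchings capacities of any $S^1$-invariant convex body are all equal, because the generating Reeb dynamics (or, via Clarke duality, the minimizers of $\psiK$) carry an $S^1$-symmetry whose Fadell--Rabinowitz index is already $\ge n$ at the bottom level $\min\psiK$ --- alternatively, one observes that $c_i^{GH}$ of an $S^1$-invariant ellipsoid $E(a,\dots,a) = B(a)$ are $a, a, \dots, a, 2a, \dots$ with the first $n$ equal, and a monotonicity/symmetrization argument (sandwiching $K$ between inscribed and circumscribed balls and using that all relevant capacities are pinched) shows $c_n^{GH}(K) = c_1^{GH}(K)$ forces nothing extra. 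The cleanest route I would actually take: $K$ being $S^1$-invariant, $\partial K = \HK^{-1}(1)$ with $\HK$ itself $S^1$-invariant, so the $S^1$-orbit of any single generalized systole through a generic point already contributes a copy of $S^1$ to $\sys(\partial K)$, but more is true --- the Hamiltonian circle action generated by $|z|^2$ restricts to give, near $\sys_*(\partial K)$, a sub-$S^1$-representation forcing $\indfr \ge n$ is automatic for the ball; for the upper bound $\indsys(K) \le n$ one instead shows $c_{n+1}^{GH}(K) > c_1^{GH}(K)$ by noting that any $S^1$-invariant convex body satisfies $c_{n+1}^{GH}(K) \ge 2 c_1^{GH}(K) > c_1^{GH}(K)$, which in turn follows from the sandwich $B(r) \subseteq K \subseteq B(R)$ combined with the $S^1$-invariance pinning $R/r$ or, more robustly, from the fact that for $S^1$-invariant $K$ one has a free $S^1$-action argument showing the $(n+1)$-st capacity already ``sees'' the second lap. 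The main obstacle here is making the claim ``$c_{n+1}^{GH}(K) > c_1^{GH}(K)$ for $S^1$-invariant $K$'' precise; I would do it by invoking the structure of $\sys(\partial K)$: $S^1$-invariance of $\HK$ means the Hopf flow is a subflow, every point lies on a closed characteristic of action $c_1^{GH}(K)$ that is an orbit of the Hopf $S^1$, $\sys(\partial K)$ contains the unit sphere $S^{2n-1}$ as the $S^1$-orbit of these, so $\indfr(\sys(\partial K)) \ge \indfr(S^{2n-1}) = n$ giving $\indsys(K) \ge n$; and for the reverse, because the circumscribed and inscribed balls $B(r), B(R)$ for an $S^1$-invariant $K$ actually coincide in their first $n$ capacities with those of $K$ only up to the constant $R/r$, but $c_{n+1}^{GH}(B(r)) = 2\pi r^2 \cdot(\text{const})$ while $c_1^{GH}(B(R))$ is of the same order, so a direct inequality $c_{n+1}^{GH}(K) \ge c_{n+1}^{GH}(B(r)) = 2 c_1^{GH}(B(r))$ versus $c_1^{GH}(K) \le c_1^{GH}(B(R))$ does not immediately close unless $R = r$ --- hence the honest argument must use that for $S^1$-invariant $K$ the capacities satisfy $c_{n+k}^{GH}(K) = c_k^{GH}(K) + c_1^{GH}(K)$ (the analogue of the ellipsoid recursion), which is the real content and which I would extract from \cite{GH18} together with the $S^1$-symmetry, or from Theorem \ref{mainsystolics1index} applied after noting $\sys(\partial K)$ fibers $S^1$-equivariantly over a space of $\indfr$ exactly $n$.
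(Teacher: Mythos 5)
Your arguments for claims (1) and (2) follow essentially the same route as the paper. For (1) you argue exactly as the paper does: the characterization of Theorem \ref{mainsystolics1index} plus continuity of the Gutt--Hutchings capacities in the Hausdorff distance turns the strict inequality $c_{m+1}^{GH}(K_0) > c_1^{GH}(K_0)$ into a neighbourhood statement. For (2) the method (John ellipsoid sandwich, the formula $c_i^{GH}(E(a_1,\dots,a_n)) = M_i(a_1,\dots,a_n)$, and a pigeonhole count of multiples) is also the paper's, but your John constants are wrong for the ambient dimension: in $\mathbb{R}^{2n}$ the Loewner--Behrend--John ellipsoid satisfies $\tfrac{1}{2n}E \subseteq K \subseteq E$ (resp.\ $\tfrac{1}{\sqrt{2n}}E \subseteq K \subseteq E$ when $K=-K$), not a factor $n$ (resp.\ $\sqrt{n}$). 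With the correct factor, $2$-homogeneity gives a squared ratio $4n^2$ (resp.\ $2n$), and your counting then produces exactly the bounds $4n^3$ and $2n^2$; there is no vague ``extra factor of $4$ for looseness'' to account for, it is precisely $(2n)^2$ versus your $n^2$. As written, your sandwich is false and your count would give $n^3$ and $n^2$, which the stated inclusions do not support.

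Claim (3) is where there is a genuine gap. The upper bound requires $c_{n+1}^{GH}(K) > c_1^{GH}(K)$, and you correctly note that sandwiching $K$ between inscribed and circumscribed balls does not close the argument; however, the substitutes you propose do not work. The identity $c_{n+k}^{GH}(K) = c_k^{GH}(K) + c_1^{GH}(K)$ for $S^1$-invariant convex bodies is nowhere established (and is not needed), and the assertion that the Hopf orbits are generalized systoles --- so that $\sys(\partial K)$ contains a copy of $S^{2n-1}$ and $\indsys(K) \geq n$ for every $S^1$-invariant $K$ --- is false: the characteristic direction is $J_0 \partial \HK$, which is tangent to the Hopf circles only when $\partial \HK(z)$ is parallel to $z$, i.e.\ essentially only for the ball; indeed Corollary \ref{systolics1indexnupperboundS1} states that the ball is the \emph{only} $S^1$-invariant generalized Zoll convex body, so $\indsys(K) < n$ for every other $S^1$-invariant body, contradicting your claim. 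The missing ingredient is the result of \cite{GHR22} that for $S^1$-invariant convex bodies $c_1^{GH}(K)$ coincides with the Gromov width. Granting this, one picks a symplectically embedded ball $B(r) \subset \operatorname{int}(K)$ with $r \in \bigl(\tfrac{1}{2}c_1^{GH}(K),\, c_1^{GH}(K)\bigr)$, and monotonicity gives $c_{n+1}^{GH}(K) \geq c_{n+1}^{GH}(\overline{B}(r)) = 2r > c_1^{GH}(K)$, whence $\indsys(K) \leq n$ by Theorem \ref{mainsystolics1index}. Without this input (or some equally concrete replacement), your claim (3) is not proved.
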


\begin{proof}
\textbf{Claim (1):} Let $\indsys(K) = i$. This means that $c^{GH}_{i+1}(K) > c^{GH}_1(K)$. By the continuity of the Gutt-Hutchings capacities, there exists a Hausdorff-distance neighborhood $U \subset \conv$ of $K$ such that for every $K' \in U$, we have $c^{GH}_{i+1}(K') > c^{GH}_1(K')$, which, by Theorem~\ref{mainsystolics1index}, implies that $\indsys(K') \leq i$. This completes the proof of the first claim.

\textbf{Claim (2):} For an arbitrary $K \in \conv$, there exists an ellipsoid $E$ of minimal volume containing $K$ such that 
\begin{equation}\label{johnelipsoid}
\frac{1}{2n}E \subseteq K \subseteq E.
\end{equation}

The existence of such an ellipsoid, known as the Loewner-Behrend-John ellipsoid, is proven in \cite{Joh48} and \cite[Appendix B]{Vit00}.

For $a_1,\dots,a_n > 0$, we define the sequence $M_i(a_1,\dots,a_n)$ of positive integer multiples of $a_1,\dots,a_n$, arranged in non-decreasing
order with repetitions. For an ellipsoid 
\[
E(a_1,\dots,a_n) = \left\{ z \in \mathbb{C}^n \mid \sum\limits_{i=1}^n \frac{\pi |z_i|^2}{a_i} \leq 1 \right\},
\]
where $a_1,\dots,a_n > 0$, it holds that $c^{GH}_i(E(a_1,\dots,a_n)) = M_i(a_1,\dots,a_n)$ (see \cite{GH18}). Since $M_{ni+1}(a_1,\dots,a_n)$ must be at least the $(i+1)$-th multiple of some of the numbers and $\allowbreak M_1(a_1,\dots,a_n) = \min\{a_1,\dots,a_n\}$, we conclude that it holds $c_{ni+1}^{GH}(E) \geq (i+1) c^{GH}_1(E)$.

From \eqref{johnelipsoid}, we know that 
\[
c_1^{GH}(E) \geq c_1^{GH}(K)
\]
and 
\[
c^{GH}_{4n^3+1}(K) \geq c^{GH}_{4n^3+1}\left(\frac{1}{2n}E\right) = \frac{1}{4n^2}c^{GH}_{4n^3+1}(E) \geq \frac{1}{4n^2}(4n^2+1)c^{GH}_1(E) > c_1^{GH}(E).
\]
Combining these estimates, we obtain $c^{GH}_{4n^3+1}(K) > c^{GH}_1(K)$, which, by Theorem~\ref{mainsystolics1index}, implies that $\indsys(K) \leq 4n^3$.

If $K = -K$, then for the Loewner-Behrend-John ellipsoid, we have 
\[
\frac{1}{\sqrt{2n}}E \subseteq K \subseteq E,
\]
which, by the same methods, implies that $\indsys(K) \leq 2n^2$. This completes the proof of the second claim.

\textbf{Claim (3):} For the standard $S^1$-action given by
\[
\theta \cdot z = e^{2\pi i \theta} z, \quad \theta \in \mathbb{T}, \ z \in \mathbb{C}^n,
\]
we have that, for an $S^1$-invariant convex body $K \subset \mathbb{R}^{2n}$, $c_1^{GH}(K)$ coincides with the Gromov width of $K$ (see \cite{GHR22}).

Therefore, we can find a ball 
\[
B(r) = \{z \in \mathbb{C}^n \mid \pi \|z\| < r\}
\]
that symplectically embeds into $K$, with 
\[
r \in \left(\frac{c^{GH}_1(K)}{2}, \, c^{GH}_1(K)\right).
\]

Since $B(r)$ symplectically embeds into $K$, it follows that 
\[
c^{GH}_{n+1}(K) \geq c^{GH}_{n+1}(\overline{B}(r)) = 2r > c^{GH}_1(K).
\]

By Theorem~\ref{mainsystolics1index}, this implies that 
\[
\indsys(K) \leq n.
\]

\end{proof}

\section{Generalized Zoll convex bodies}

In this section, we present the proof of Theorem \ref{maingeneralizedzoll}, along with its corollary on the upper bound of $\indsys$ in the smooth case. We also discuss concrete examples and explore the relationship between the generalized Zoll property and the evaluation map on the space of systoles.

\subsection{Properties of generalized Zoll convex bodies}

\begin{proof}[Proof of Theorem \ref{maingeneralizedzoll}]

\textbf{Claim (1):} The claim follows immediately from Theorem \ref{mainsystolics1index} and the definition of generalized Zoll convex bodies.

\textbf{Claim} (2): We follow a modified version of the arguments in \cite[Lemma~3.1]{GGM21}. This modification is necessary because, in our setting, no regularity is assumed on the boundary, and thus the construction of a Riemannian metric used in the original proof is not applicable.

Let $K \subset \mathbb{R}^{2n}$ be a convex body whose interior contains the origin. We consider the evaluation map
\[
\ev : \sys(\partial K) \to \partial K, \quad \ev(\gamma) = \gamma(0).
\]

The condition that $K$ satisfies the uniqueness of systoles property is equivalent to the injectivity of $\ev$. In this case, since $\sys(\partial K)$ is compact and $\partial K$ is Hausdorff, it follows that $\ev$ is a homeomorphism onto its image.

From the definition of the systolic $S^1$-index and that of generalized Zoll convex bodies, it follows that $K$ is generalized Zoll if and only if $\indfr(\sys(K)) \geq n$.

Assume that $K$ is Zoll. Then $\ev$ is a homeomorphism, which implies that $H^*(\sys(\partial K))$ is isomorphic to $H^*(S^{2n-1})$. From the $S^1$-fiber bundle
\[
\pi : \sys(\partial K) \times ES^1 \to \sys(\partial K) \times_{S^1} ES^1,
\]
we obtain the Gysin sequence
\[
\cdots \xrightarrow{\pi^*} H^{*+1}(\sys(\partial K)) \xrightarrow{\pi_*} H_{S^1}^*(\sys(\partial K)) \xrightarrow{\smile e} H_{S^1}^{*+2}(\sys(\partial K)) \xrightarrow{\pi^*} H^{*+2}(\sys(\partial K)) \xrightarrow{\pi_*} \cdots
\]
Since $H^*(\sys(\partial K)) \cong H^*(S^{2n-1})$, we conclude that $e^{n-1} \neq 0$, where $e$ denotes the fundamental class in $H_{S^1}^*(\sys(\partial K))$. Therefore,
\[
\indfr(\sys(\partial K)) \geq n.
\]

Now assume that $K$ is not Zoll. Denote by $P$ the image of $\sys(\partial K)$ under $\ev$. Since $\ev$ is a homeomorphism onto its image, it follows that 
\begin{equation}\label{isomorphismimageuniquness}
    H^*(\sys(\partial K)) \cong H^*(P).
\end{equation}

We will show that $H^{2i-1}(P) = \{0\}$ for all $i \geq n$, which implies that 
\[
\indsys(K) = \indfr(\sys(\partial K)) < n.
\] 
Indeed, from \eqref{isomorphismimageuniquness} we have $H^{2i-1}(\sys(\partial K)) = \{0\}$ for all $i \geq n$. Since $\indfr(\sys(\partial K))$ is finite, we can assume that $e^{r-1} \neq 0$ and $e^r = 0$, i.e., $\indfr(\sys(\partial K)) = r$. From the Gysin sequence 
\[
\cdots \xrightarrow{\pi^*} H^{2r-1}(\sys(\partial K)) \xrightarrow{\pi_*} H_{S^1}^{2r-2}(\sys(\partial K)) \xrightarrow{\smile e} H_{S^1}^{2r}(\sys(\partial K)) \xrightarrow{\pi^*} H^{2r}(\sys(\partial K)) \xrightarrow{\pi_*} \cdots
\]
and the assumptions $e^{r-1} \neq 0$ and $e^r = 0$, we see that
\[
\smile e : H_{S^1}^{2r-2}(\sys(\partial K)) \to H_{S^1}^{2r}(\sys(\partial K))
\]
has a nontrivial kernel, which coincides with the image of 
\[
\pi_* : H^{2r-1}(\sys(\partial K)) \to H_{S^1}^{2r}(\sys(\partial K)).
\]
Hence $H^{2r-1}(\sys(\partial K))$ must be nontrivial. Since $H^{2i-1}(\sys(\partial K)) = \{0\}$ for all $i \geq n$, we conclude that $e^{n-1} = 0$, and therefore $\indfr(\sys(\partial K)) < n$.

It remains to show that $H^{2i-1}(P) = 0$ for all $i \geq n$. Since $P$ is compact, we can cover it by open sets $\{U_j\}$, which are arbitrarily small neighborhoods of $P$ in $\partial K$ such that $U_j \neq \partial K$. Since $\partial K$ with one point removed is homeomorphic to $\mathbb{R}^{2n+1}$, each $U_j$ can be regarded as an open subset of $\mathbb{R}^{2n+1}$. It follows easily that $H^{2i-1}(U_j) = \{0\}$ for all $i \geq n$. 

By the continuity property of Alexandrov–Spanier cohomology, we have
\[
H^*(P) = \varinjlim\limits_{j} H^*(U_j).
\]
Hence, $H^{2i-1}(P) = 0$ for all $i \geq n$, which concludes the proof.

\textbf{Claim} (3): It follows from the fact that the space of generalized Zoll convex bodies in $\mathbb{R}^{2n}$ is the inverse image of the set $[n, +\infty)$ by $\indsys$, and $\indsys$ is upper semi-continuous by claim (1) of Proposition \ref{uppersemicontuperbound}.

\end{proof}

The fact that in the smooth case a convex body $C$ is generalized Zoll if and only if it is Zoll implies an upper bound for $\indsys$ in the smooth case. 

\begin{corollary}\label{systolics1indexnupperboundsmooth}
If $K \subset \mathbb{R}^{2n}$ is a convex body that satisfies the uniqueness of systoles property, then $\indsys(K) \leq n$, with equality if and only if $K$ is Zoll.
\end{corollary}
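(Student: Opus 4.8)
The plan is to convert the cohomological analysis already carried out for Theorem~\ref{maingeneralizedzoll}(2) into an \emph{upper} bound on the index. Since translating $K$ alters neither the uniqueness of systoles property, nor the Zoll property, nor $\indsys$, I may assume $0\in\text{int}(K)$, so that $\indsys(K)=\indfr(\sys(\partial K))$. As $K$ satisfies the uniqueness of systoles property, the evaluation map $\ev\colon\sys(\partial K)\to\partial K$ is injective (this is equivalent to the uniqueness property, cf.\ the proof of Theorem~\ref{maingeneralizedzoll}(2)); since $\sys(\partial K)$ is compact by Proposition~\ref{uniformtopology} and $\partial K$ is Hausdorff, $\ev$ is then a homeomorphism onto its image $P:=\ev(\sys(\partial K))$, so $H^*(\sys(\partial K);\mathbb{Z}_2)\cong H^*(P;\mathbb{Z}_2)$.

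The key observation is that, in either of the two possible situations, $H^{2i-1}(P;\mathbb{Z}_2)=0$ for every $i\geq n+1$. If $K$ is not Zoll, then $P\subsetneq\partial K$, and exactly as in the proof of Theorem~\ref{maingeneralizedzoll}(2) one even has $H^{2i-1}(P;\mathbb{Z}_2)=0$ for all $i\geq n$ (the point being that $P$ is compact and contained in $\partial K$ with a point removed, a space homeomorphic to $\mathbb{R}^{2n-1}$). If $K$ is Zoll, then $\ev$ is moreover surjective, so $P=\partial K$, which is homeomorphic to $S^{2n-1}$ and therefore satisfies $H^{j}(P;\mathbb{Z}_2)=0$ for all $j>2n-1$. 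In both cases the asserted vanishing in odd degrees $\geq 2n+1$ holds.

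Next I would feed this into the Gysin sequence of $\sys(\partial K)\times ES^1\to\sys(\partial K)\times_{S^1}ES^1$, just as in the proof of Theorem~\ref{maingeneralizedzoll}(2). Set $r:=\indsys(K)=\indfr(\sys(\partial K))$, which is finite by Theorem~\ref{mainsystolics1index} (since $c^{GH}_i(K)\to\infty$) and satisfies $r\geq 1$ because $\sys(\partial K)\neq\emptyset$; by the definition of the Fadell--Rabinowitz index this means $e^{r-1}\neq 0$ and $e^{r}=0$, where $e\in H^2_{S^1}(\sys(\partial K);\mathbb{Z}_2)$ is the fundamental class. Exactness of the segment
\[
H^{2r-1}(\sys(\partial K))\xrightarrow{\pi_*}H^{2r-2}_{S^1}(\sys(\partial K))\xrightarrow{\smile e}H^{2r}_{S^1}(\sys(\partial K))
\]
then gives $e^{r-1}\in\ker(\smile e)=\operatorname{im}(\pi_*)$, and since $e^{r-1}\neq 0$ we conclude $H^{2r-1}(\sys(\partial K);\mathbb{Z}_2)\cong H^{2r-1}(P;\mathbb{Z}_2)\neq 0$. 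By the previous paragraph this is impossible once $r\geq n+1$, hence $r\leq n$, that is, $\indsys(K)\leq n$.

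For the equality statement it then suffices to observe that $\indsys(K)=n$ holds if and only if $\indsys(K)\geq n$, i.e.\ if and only if $K$ is a generalized Zoll convex body, which by Theorem~\ref{maingeneralizedzoll}(2) is in turn equivalent to $K$ being Zoll (alternatively, one may pass through the characterization $c^{GH}_1(K)=c^{GH}_n(K)$ via Theorems~\ref{mainsystolics1index} and~\ref{maingeneralizedzoll}(1)). The only step requiring any care is the uniformity across the two cases in the key observation: when $K$ is not Zoll the cohomological vanishing starts already in degree $2n-1$, whereas when $K$ is Zoll one has $H^{2n-1}(P;\mathbb{Z}_2)\neq 0$, so the Gysin argument can only be pushed to yield $r\leq n$ and not $r<n$ --- which is exactly why equality is attained precisely in the Zoll case. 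Beyond this, the proof is just an assembly of facts already established.
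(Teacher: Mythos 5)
Your proof is correct and follows essentially the same route as the paper: reduce via Theorem~\ref{maingeneralizedzoll}(2), use that $\ev$ is a homeomorphism onto its image to control $H^*(\sys(\partial K);\mathbb{Z}_2)$ in odd degrees above $2n-1$, and then run the Gysin-sequence argument with the finiteness of $\indfr$ to force $\indsys(K)\leq n$; the only organizational difference is that you treat the Zoll and non-Zoll cases uniformly, while the paper first reduces to the Zoll case (where $\sys(\partial K)\cong S^{2n-1}$) before applying Gysin. As a minor aside, your identification of $\partial K$ minus a point with $\mathbb{R}^{2n-1}$ is the correct dimension (the paper's $\mathbb{R}^{2n+1}$ is a typo).
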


\begin{proof}
From Theorem~\ref{maingeneralizedzoll}, we know that for a convex body $K$ satisfying the uniqueness of systoles property, $\indsys(K) \geq n$ if and only if $K$ is Zoll. 
Therefore, it remains to show that for a Zoll convex body $K$, we have $\indsys(K) \leq n$. 

In this case, $\sys(\partial K)$ is homeomorphic to $S^{2n-1}$. 
Hence, $H^{2i-1}(\sys(\partial K)) = 0$ for $i > n$. 
From the $S^1$-fiber bundle
\[
\pi: \sys(\partial K) \times ES^1 \to \sys(\partial K) \times_{S^1} ES^1,
\]
we obtain the Gysin sequence
\[
\cdots \xrightarrow{\pi^*} H^{*+1}(\sys(\partial K)) 
\xrightarrow{\pi_*} H_{S^1}^*(\sys(\partial K)) 
\xrightarrow{\smile e} H_{S^1}^{*+2}(\sys(\partial K)) 
\xrightarrow{\pi^*} H^{*+2}(\sys(\partial K)) 
\xrightarrow{\pi_*} \cdots
\]
which implies that $e^{n} = 0$, and therefore $\indsys(K) = \indfr(\sys(\partial K)) \leq n$.
\end{proof}

\begin{corollary}\label{systolics1indexnupperboundS1}
Let $K \subset \mathbb{R}^{2n}$ be a convex body which is invariant under the standard $S^1$-action. Then $\indsys(K) \leq n$, where equality holds if and only if $K$ is a ball. In particular, the only $S^1$-invariant generalized Zoll convex body is a ball.      
\end{corollary}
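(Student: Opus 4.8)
The plan is to combine the upper bound $\indsys(K)\le n$, which already follows from claim (3) of Proposition~\ref{uppersemicontuperbound}, with a rigidity analysis of the equality case. So the only real content is: if $K$ is $S^1$-invariant and $\indsys(K)=n$, then $K$ is a ball. By Theorem~\ref{mainsystolics1index} the hypothesis $\indsys(K)=n$ is equivalent to $c_n^{GH}(K)=c_1^{GH}(K)$. I would normalize so that $c_1^{GH}(K)=1$.

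First I would recall, as in the proof of claim (3) of Proposition~\ref{uppersemicontuperbound}, that for an $S^1$-invariant convex body $c_1^{GH}(K)$ equals the Gromov width of $K$ (by \cite{GHR22}); hence there is a symplectic embedding $B(r)\hookrightarrow K$ with $r$ arbitrarily close to $c_1^{GH}(K)=1$ from below. Taking $r\uparrow 1$ and using monotonicity of $c_n^{GH}$ together with $c_n^{GH}(\overline B(r))=r$ (since $M_n(r)=r$ for the one-parameter family $E(r,\dots,r)$), we get $c_n^{GH}(K)\ge 1$; this only recovers the known inequality. For the rigidity I would instead exploit the \emph{other} side: $K$ is $S^1$-invariant and convex, so $K$ contains the ball $\overline B(1)$ of capacity $1$, because the origin is in $\mathrm{int}(K)$ and $K$ is a union of complex lines intersected in $S^1$-invariant disks — more precisely, one shows $c_1^{GH}(\overline B(\rho))\le c_1^{GH}(K)$ forces the largest concentric ball $\overline B(\rho_0)\subseteq K$ to satisfy $\rho_0\ge$ (normalized value). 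Then $\overline B(1)\subseteq K$, and on the other hand the condition $c_n^{GH}(K)=c_1^{GH}(K)=c_1^{GH}(\overline B(1))=c_n^{GH}(\overline B(1))$ pins $K$ between two bodies of equal $n$-th capacity; combined with monotonicity this should force $K=\overline B(1)$. The cleanest route is: from $\overline B(1)\subseteq K$ we get $\indsys(\overline B(1))\le \indsys$-type comparisons are not monotone, so instead I would argue directly on the generalized systoles. Since $c_1^{GH}(K)=1$, every generalized systole on $\partial K$ has action $1$; an $S^1$-invariant $K$ has, through each point $z\in\partial K$ with $|z|$ maximal along its complex line, the obvious circle systole $t\mapsto e^{2\pi i t}z$ of action $\pi|z|^2$, so $\pi|z|^2\ge 1$ i.e.\ $\overline B(1)\subseteq K$; and $c_n^{GH}(K)=1$ forces $\sys(\partial K)$ to have Fadell–Rabinowitz index $\ge n$, hence (by the cohomological discussion in the proof of Theorem~\ref{maingeneralizedzoll}, claim (2)) $e^{n-1}\ne 0$ in $H^*_{S^1}(\sys(\partial K))$, which constrains $\sys(\partial K)$ to be large enough that every point of $\partial K$ lies on a systole — but a systole on $\partial K$ of action $1$ through a point with $\pi|z|^2>1$ cannot be the obvious circle and, by a convexity/action argument, cannot exist unless $\partial K$ is spherical there. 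Making this last implication precise is the crux.

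Concretely, the key steps in order: (i) normalize $c_1^{GH}(K)=1$ and record $\indsys(K)=n\iff c_n^{GH}(K)=1$; (ii) use $S^1$-invariance plus \cite{GHR22} and convexity to show $\overline B(1)\subseteq K$; (iii) observe that all generalized systoles on $\partial K$ have action exactly $1$; (iv) show that a generalized systole through a point $z\in\partial K$ with $\pi|z|^2>1$ would have action $>1$, using that along such a systole the loop stays on $\partial K\supseteq$ a region strictly outside $\overline B(1)$ and the action is a monotone functional of the enclosed area — equivalently, use the isoperimetric-type inequality $\mathcal A(\gamma)\ge$ capacity of the smallest ball meeting $\mathrm{conv}(\gamma(\mathbb T))$, or simply that $c_1^{GH}$ is monotone and $c_1^{GH}(\overline B(1+\varepsilon))=1+\varepsilon$ if a systole were confined to an annular region; (v) conclude that the set $\{z\in\partial K:\pi|z|^2>1\}$ carries no systole, hence by the evaluation-map argument from the proof of Theorem~\ref{maingeneralizedzoll}(2) — which shows that when $\ev$ misses an open set one gets $e^{n-1}=0$ — we would get $\indsys(K)<n$ unless that set is empty; (vi) emptiness of $\{\pi|z|^2>1\}$ together with $\overline B(1)\subseteq K$ gives $K=\overline B(1)$.

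The main obstacle is step (v): the argument in the proof of Theorem~\ref{maingeneralizedzoll}(2) assumed the \emph{uniqueness of systoles} property so that $\ev$ was a homeomorphism onto its image, whereas here $\ev$ may be far from injective and many systoles can pass through a single point. I expect to need a replacement that says: if the image of $\ev$ (which is always closed, since $\sys(\partial K)$ is compact and $\partial K$ Hausdorff) is a proper closed $S^1$-invariant subset of $\partial K$ avoiding some open $S^1$-invariant set, then, using that $\partial K$ minus a point embeds in $\mathbb R^{2n+1}$ and the continuity property of Alexandrov–Spanier cohomology, $H^{2i-1}$ of that image vanishes for $i\ge n$, and then run the Gysin-sequence argument through $\ev^*$ together with the fact that $\ev$ is $S^1$-equivariant and $\ev^*$ is injective on the relevant classes (because $\ev$ restricted to a single systole is the standard $S^1\hookrightarrow S^{2n-1}$, so pulls back $e$ nontrivially). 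Getting the injectivity of $\ev^*$ in equivariant cohomology, or an adequate substitute, without uniqueness of systoles is the delicate point; an alternative is to bypass $\ev$ entirely and argue that $c_n^{GH}(K)=1$ with $\overline B(1)\subseteq K$ already forces, via the explicit formula $c_i^{GH}(E(a_1,\dots,a_n))=M_i(a_1,\dots,a_n)$ applied to inscribed and circumscribed $S^1$-invariant ellipsoids together with the characterization of the equality case in the monotonicity of Gutt–Hutchings capacities (a ball-rigidity statement of Gutt–Hutchings type), that $K$ is itself a ball. I would attempt this capacity-rigidity route first, as it sidesteps the non-injectivity of the evaluation map entirely.
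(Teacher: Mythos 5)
Your reduction (i)--(iii) is fine and matches the paper's starting point (the upper bound is Proposition \ref{uppersemicontuperbound}(3), and the identification of $c_1^{GH}(K)$ with the capacity of the inscribed tangent ball is the fact from \cite{GHR22}), but both pillars of your rigidity argument fail. Step (iv) is false as stated: the polydisc $P(1,1)$ is invariant under the standard (diagonal) $S^1$-action, has $c_1^{GH}(P(1,1))=1$, and its systoles $\gamma(t)=(e^{2\pi i t}z_1,z_2)$ with $\pi|z_1|^2=1$, $\pi|z_2|^2\le 1$ have action exactly $1$ while passing through points $z$ with $\pi|z|^2$ as large as $2$. The action only records the symplectic area enclosed inside the complex line and is blind to how far from the origin the loop sits, so no isoperimetric or monotonicity argument of the type you sketch can show that systoles avoid $\{\pi|z|^2>1\}$; and you give no mechanism by which the extra hypothesis $c_n^{GH}=c_1^{GH}$ would enter such a pointwise action estimate. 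Step (v) is also unsupported even granting (iv): the Gysin argument in the proof of Theorem \ref{maingeneralizedzoll}(2) uses that $\ev$ is injective, hence a homeomorphism onto its image, to transport $H^{2i-1}(P)=0$ back to $\sys(\partial K)$; without uniqueness of systoles the fibres of $\ev$ can be large and the cohomology of the image controls nothing. Your proposed repair assumes $\ev$ is $S^1$-equivariant and $\ev^*$ injective on the relevant classes, but $\ev(\theta\cdot\gamma)=\gamma(-\theta)$, which is not $e^{2\pi i\theta}\gamma(0)$ for a general systole, so $\ev$ does not intertwine the loop-rotation action with the ambient one, and no injectivity of $\ev^*$ is available. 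Finally, the fallback ``ball rigidity for Gutt--Hutchings capacities'' ($\overline{B}(1)\subseteq K$ and $c_n^{GH}(K)=c_n^{GH}(\overline{B}(1))$ imply $K=\overline{B}(1)$) is not a citable theorem; it is essentially the statement you are trying to prove, so that route begs the question.

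The missing idea is the one the paper uses, which bypasses systole locations and the evaluation map entirely. Assume $K$ is $S^1$-invariant, generalized Zoll, and not a ball, and let $\overline{B}(r)$ be the inscribed ball tangent to $\partial K$, so $c_1^{GH}(\overline{B}(r))=c_1^{GH}(K)$ by \cite{GHR22}. Choose a smooth convex body $\widetilde{K}$ with $\overline{B}(r)\subset\widetilde{K}\subset K$ and $\operatorname{Vol}(\widetilde{K})>\operatorname{Vol}(\overline{B}(r))$. Monotonicity pinches $c_1^{GH}(\widetilde{K})=c_n^{GH}(\widetilde{K})$, so $\widetilde{K}$ is Zoll by Theorem \ref{maingeneralizedzoll}(2); but since $c_1^{GH}(\widetilde{K})=c_1^{GH}(\overline{B}(r))$ while $\operatorname{Vol}(\widetilde{K})>\operatorname{Vol}(\overline{B}(r))$, its systolic ratio is strictly less than $1$, contradicting the fact that smooth Zoll convex bodies have the systolic ratio of the ball. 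That systolic-ratio rigidity for Zoll bodies, combined with the squeezing of a smooth body between the tangent ball and $K$, is exactly the ingredient your proposal lacks.
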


\begin{proof}
From statement~(3) of Proposition~\ref{uppersemicontuperbound}, it follows that it is sufficient to show that the only $S^1$-invariant generalized Zoll convex body is a ball. 

Assume that $K \subset \mathbb{R}^{2n}$ is an $S^1$-invariant generalized Zoll convex body which is not a ball. Let
\[
\overline{B}(r) = \{ z \in \mathbb{K}^n \mid \pi \|z\| \leq r \}
\]
denote the ball tangent to the boundary of $K$. We know that
\[
c_1^{GH}(\overline{B}(r)) = c_1(K).
\]
For the proof of this fact, see \cite{GHR22}.

Since $K$ differs from the ball, we conclude that there exists a smooth convex body $\widetilde{K}$ with strictly larger volume than $\overline{B}(r)$ such that
\[
\overline{B}(r) \subset \widetilde{K} \subset K.
\]
Therefore,
\[
c_1^{GH}(\overline{B}(r)) = c_1(\widetilde{K}) = c_1(K) = c_n(K),
\]
which implies that $c_1(\widetilde{K}) = c_n(\widetilde{K})$, and by Theorem~\ref{maingeneralizedzoll}, $\widetilde{K}$ is Zoll.

Moreover, since $c_1(\widetilde{K}) = c_1(\overline{B}(r))$ and $\overline{B}(r)$ is a strict subset of $\widetilde{K}$, it follows that the systolic ratio of $\widetilde{K}$ is smaller than that of the ball, i.e.,
\[
\frac{c_1^{GH}(K)^n}{n!\operatorname{Vol}(K)} < \frac{c_1^{GH}(\overline{B}(r))^n}{n!\operatorname{Vol}(\overline{B}(r))} = 1.
\]
This is a contradiction, since $\widetilde{K}$ is Zoll and therefore has the same systolic ratio as a ball.

\end{proof}

\subsection{Evaluation map and generalized Zoll convex bodies}

We define the evaluation map  
\[
\ev : (\sys(\partial K), \|\cdot\|_{\infty}) \to (\partial K, |\cdot|), \quad \ev(\gamma) = \gamma(0).
\]
This map is continuous.

If $K \subset \mathbb{R}^{2n}$ is a convex body satisfying the uniqueness of systoles property and whose interior contains the origin, then $K$ is generalized Zoll if and only if $\ev(\sys(\partial K)) = \partial K$, or equivalently, if $\ev$ is a homeomorphism. 
This follows from statement~(2) of Theorem~\ref{maingeneralizedzoll}, which asserts that, in this case, $K$ is generalized Zoll if and only if it is Zoll. 
As we shall see, this equivalence no longer holds beyond this setting. 
In particular, the condition $\ev(\sys(\partial K)) = \partial K$ is not sufficient, as $\ev$ fail to be injective.

\begin{example}\label{polydicsevaluation}
The polydisc  
\[
P(a, \dots, a) = \left\{ z \in \mathbb{C}^n \mid \pi |z_i|^2 \leq a, \quad i \in \{1, \dots, n\} \right\},
\]
where $a > 0$, is not a generalized Zoll convex body, and yet 
\[
\ev(\sys(\partial P(a, \dots, a))) = \partial P(a, \dots, a).
\]

The fact that the polydisc is not generalized Zoll follows from the identity $c_i^{GH}(P(a, \dots, a)) = k a$. 
Moreover, $\ev(\sys(\partial P(a, \dots, a))) = \partial P(a, \dots, a)$ holds because through every point of $\partial P(a, \dots, a)$ there passes a systole. 
We now examine the case of the polydisc $P(1, 1)$ more explicitly. 
The space $\sys(\partial P(1, 1))$ consists of loops of the form
\[
\gamma: \mathbb{T} \to \partial P(1, 1), \quad \gamma(t) = \left( e^{2\pi i t} z_1, z_2 \right), \quad |z_1|^2 = \frac{1}{\pi}, \ |z_2|^2 \leq \frac{1}{\pi},
\]
and
\[
\gamma: \mathbb{T} \to \partial P(1, 1), \quad \gamma(t) = \left( z_1, e^{2\pi i t} z_2 \right), \quad |z_1|^2 \leq \frac{1}{\pi}, \ |z_2|^2 = \frac{1}{\pi}.
\]
It follows that $\ev(\sys(\partial P(1, 1))) = \partial P(1, 1)$. 
Furthermore, this space is $S^1$-homotopic to a space consisting of two disjoint orbits. 
Hence,
\[
\indsys(P(1,1)) = \indfr(\sys(\partial P(1,1))) = 1 < 2,
\]
and therefore $P(1,1)$ is not generalized Zoll.
\end{example}

The previous example shows that a convex body $C$ can have a systole passing through every point on $\partial C$ and still fail to be generalized Zoll. On the other hand, we now provide an example of a generalized Zoll convex body that does not satisfy the uniqueness of systoles property, yet still has the feature that through every point on its boundary there exists a generalized systole passing through that point.

\begin{example}\label{nonsmoothzollevaluation}
The convex body $B_\infty \times B_1 \subset \mathbb{R}^4$ is a generalized Zoll convex body that does not satisfy the uniqueness of systoles property, and 
\[
\ev(\sys(B_\infty \times B_1)) = \partial(B_\infty \times B_1).
\]

Since $B_\infty \times B_1$ is a Lagrangian product whose interior is symplectomorphic to a ball, Corollary~\ref{zollball} implies that it is generalized Zoll. On the other hand, from the proof of Proposition~\ref{w11norm}, it is clear that $B_\infty \times B_1$ does not satisfy the uniqueness of systoles property.

Let us now examine the dynamics on the boundary. The associated positively $2$-homogeneous Hamiltonian $H_{B_\infty \times B_1}: \mathbb{R}^{2n} \to \mathbb{R}$ is given by
\[
H_{B_\infty \times B_1}(x_1, x_2, y_1, y_2) = \max\left\{\|(x_1, x_2)\|_\infty^2, \|(y_1, y_2)\|_1^2\right\}.
\]

\begin{itemize}
    \item If $(x, y) \in \operatorname{int}(B_\infty) \times \partial B_1$, then 
    \[
    \partial H_{B_\infty \times B_1}(x, y) = \left(0, \partial \|(y_1, y_2)\|_1^2\right), \quad 
    J_0 \partial H_{B_\infty \times B_1}(x, y) = \left(-\partial \|(y_1, y_2)\|_1^2, 0\right).
    \]
    \item If $(x, y) \in \partial B_\infty \times \operatorname{int}(B_1)$, then 
    \[
    \partial H_{B_\infty \times B_1}(x, y) = \left(\partial \|(x_1, x_2)\|_\infty^2, 0\right), \quad 
    J_0 \partial H_{B_\infty \times B_1}(x, y) = \left(0, \partial \|(x_1, x_2)\|_\infty^2\right).
    \]
\end{itemize}

Therefore, if $(x_1, x_2, y_1, y_2)$ are such that $(x_1, x_2)$ is not on the diagonals of $B_\infty$ and $(y_1, y_2)$ is not on the diagonals of $B_1$, then the subdifferential contains a unique vector. This is not the case for points in $\partial B_\infty \times \partial B_1$ that lie off the diagonals, but in this situation, there is still only one vector in $J_0 \partial H_{B_\infty \times B_1}$ tangent to $\partial(B_\infty \times B_1)$. Hence, in such cases, the corresponding systole is unique, as illustrated in Figure~\ref{fig:linfl1nondiag}.

\begin{figure}[H]
  \centering
  \includegraphics[scale=0.3]{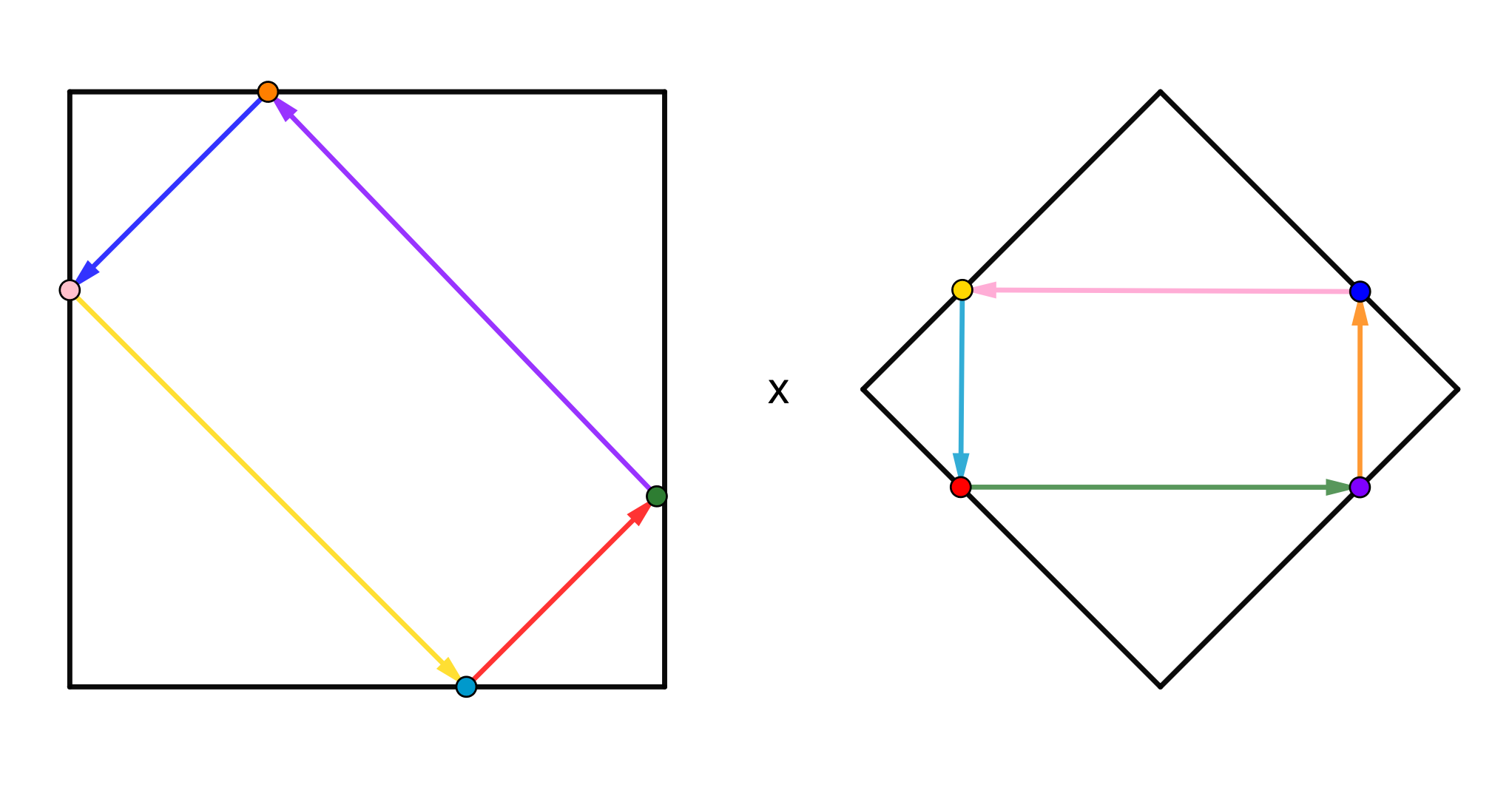}
  \caption{Systoles that do not contain the diagonals of $B_\infty$ or $B_1$.}
  \label{fig:linfl1nondiag}
\end{figure}

This figure illustrates the dynamics of the systoles. Since systoles must lie on the boundary of $B_\infty \times B_1$, it follows that either $(x_1, x_2)$ must lie on $\partial B_\infty$ or $(y_1, y_2)$ must lie on $\partial B_1$. We may assume that $(x_1, x_2)$ lies on $\partial B_\infty$ and is not one of its corners. 

For instance, let $(x_1, x_2)$ be the green node in Figure~\ref{fig:linfl1nondiag}. Then, whichever coordinate $(y_1, y_2)$ in $B_1$ we choose, it must move in the direction of the green vector until it reaches the boundary of $B_1$ (the purple node in the figure). At this point, the systole can no longer move in the direction of the green arrow, since it must remain on $\partial(B_\infty \times B_1)$. Therefore, $(y_1, y_2)$ remains constant. If we choose $(y_1, y_2)$ off the diagonals, there is only one direction in which $(x_1, x_2)$ can now move, namely, in the direction of the purple vector until it reaches the boundary of $B_\infty$ (the orange node in Figure~\ref{fig:linfl1nondiag}). This alternating process then continues until the loop closes.

By narrowing the rectangles in $B_1$ in the vertical direction, we obtain, in the uniform limit, the systoles illustrated in Figure~\ref{fig:linfl1onediag}.

\begin{figure}[H]
  \centering
  \includegraphics[scale=0.3]{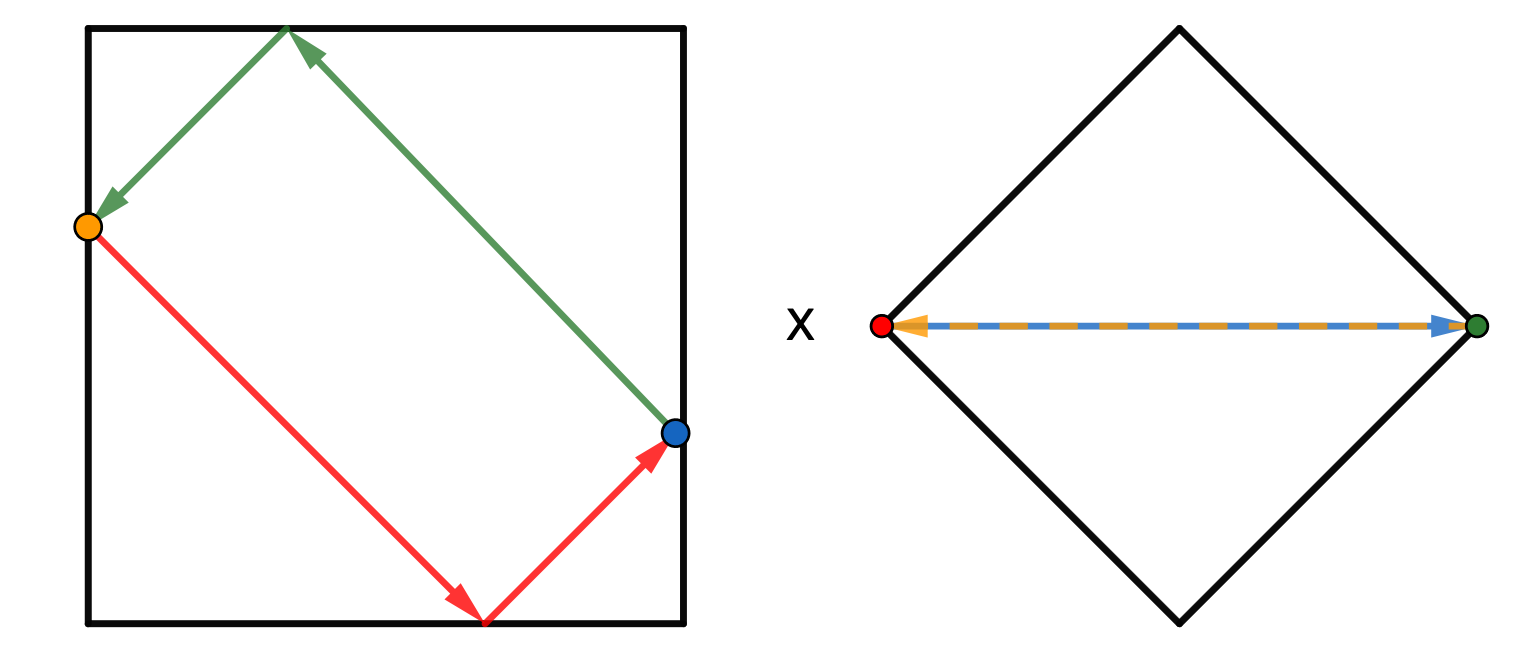}
  \caption{Systoles whose projection onto $B_1$ is the horizontal diagonal of $B_1$.}
  \label{fig:linfl1onediag}
\end{figure}

The dynamics of these systoles differ in the following way. Consider, for instance, the red node located at a corner of $\partial B_1$. If $(x_1, x_2) \in B_\infty$, the subdifferential of $H_{B_\infty \times B_1}$ at the point $(x_1, x_2, -1, 0)$ contains the convex hull of the vectors $(0, 0, -2, -2)$ and $(0, 0, -2, 2)$. As a result, the red arrow in $B_\infty$ can change direction at any point, taking any direction such that the angle measured from the horizontal axis lies within the range $[-\frac{\pi}{4}, \frac{\pi}{4}]$. Note that the directions of the red arrows in the figure correspond to these extremal directions.

By an analogous process, one can obtain systoles involving the other diagonals of $B_\infty$ and $B_1$.

By shrinking both rectangles, we obtain systoles that are entirely contained within the diagonals of $B_\infty$ and $B_1$. One such systole is illustrated in Figure~\ref{fig:linfl1twodiag}.

\begin{figure}[H]
  \centering
  \includegraphics[scale=0.3]{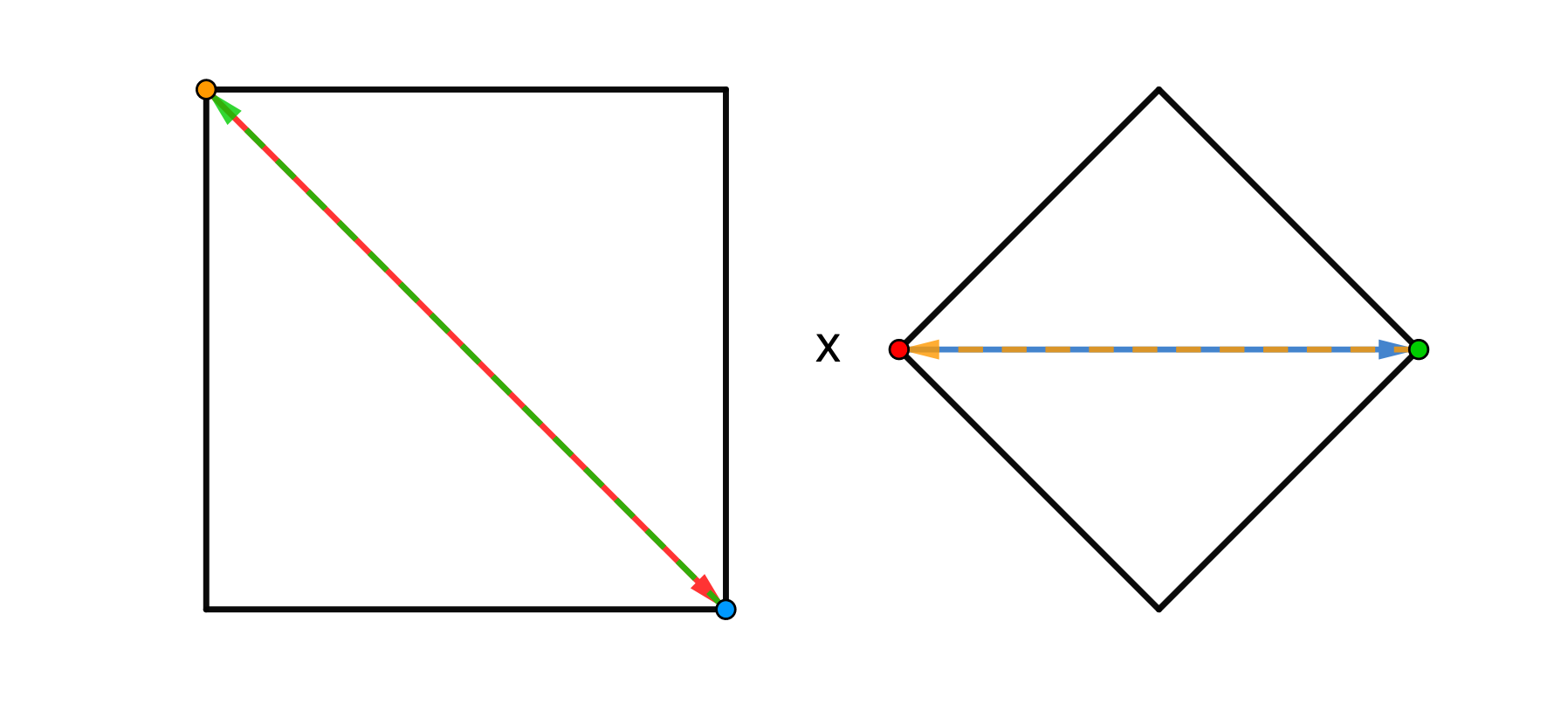}
  \caption{A diagonal systole on $\partial(B_\infty \times B_1)$.}
  \label{fig:linfl1twodiag}
\end{figure}

The subset of systoles 
\[
\widetilde{\sys}(\partial(B_\infty \times B_1)) \subseteq \sys(\partial(B_\infty \times B_1)),
\]
consisting of the systoles described above, satisfies 
\[
\ev(\widetilde{\sys}(\partial(B_\infty \times B_1))) = \partial(B_\infty \times B_1).
\]
Consequently,
\[
\ev(\sys(\partial(B_\infty \times B_1))) = \partial(B_\infty \times B_1).
\]

\end{example}

\end{document}